\setlist[enumerate, 1]{label=(\roman*)}
\declaretheorem[style=plain,name=Theorem,qed={\tiny$\blacksquare$},numberwithin=section]{theorem}
\declaretheorem[style=plain,name=Corollary,sibling=theorem,qed={\tiny$\blacksquare$}]{corollary}
\declaretheorem[style=definition,name=Definition,sibling=theorem,qed={\tiny$\blacksquare$}]{definition}
\declaretheorem[style=plain,name=Lemma,sibling=theorem,qed={\tiny$\blacksquare$}]{lemma}
\declaretheorem[style=definition,name=Remark,sibling=theorem,qed={\tiny$\blacksquare$}]{remark}
\numberwithin{equation}{section}
\renewcommand{\lor}{{\mathcal{\mathbf{L^3}}}}
\newcommand{\cyc}{{\mathcal{\mathbf{C}}}}
\newcommand{\eucl}{{\mathcal{\mathbf{E^2}}}}
\newcommand{\mob}{\mathcal{M}}
\renewcommand{\sp}{\mathcal{S}}
\newcommand{\osp}{\vec{\mathcal{S}}}
\newcommand{\li}{\mathcal{L}}
\newcommand{\oli}{\vec{\mathcal{L}}}
\newcommand{\ci}{\mathcal{C}}
\newcommand{\oci}{\vec{\mathcal{C}}}
\newcommand{\pl}{\mathcal{P}}
\newcommand{\opl}{\vec{\mathcal{P}}}
\newcommand{\Zb}{\Z_\bullet} 
\newcommand{\Zw}{\Z_\circ} 
\newcommand{\cp}{p} 
\newcommand{\cpf}{p_{\fboxsep=-\fboxrule\sbox0{}\wd0=4pt\ht0=4pt\relax\fbox{\box0}}}
\newcommand{\cpm}{{\odot{p}}}
\newcommand{\cpb}{p_{\bullet}}
\newcommand{\cpw}{p_{\circ}}
\newcommand{\cpmb}{{\odot{p_{\bullet}}}}
\newcommand{\cpmw}{{\odot{p_{\circ}}}}
\newcommand{\lcp}{u} 
\newcommand{\lcpf}{u_{\fboxsep=-\fboxrule\sbox0{}\wd0=4pt\ht0=4pt\relax\fbox{\box0}}}
\newcommand{\lcpm}{{\odot{u}}}
\newcommand{\lcpb}{u_{\bullet}}
\newcommand{\lcpw}{u_{\circ}}
\newcommand{\lcpmb}{{\odot{u_{\bullet}}}}
\newcommand{\iso}{h} 
\newcommand{\isof}{h_{\fboxsep=-\fboxrule\sbox0{}\wd0=4pt\ht0=4pt\relax\fbox{\box0}}}
\newcommand{\isom}{{\odot{h}}}
\newcommand{\isob}{h_{\bullet}}
\newcommand{\isow}{h_{\circ}}
\newcommand{\isomb}{\mathord{\odot}h_{\bullet}}
\newcommand{\isomw}{\mathord{\odot}h_{\circ}}
\newcommand{\isoc}{h_\boxplus}
\newcommand{\cc}{c}
\newcommand{\ccf}{c_{\fboxsep=-\fboxrule\sbox0{}\wd0=.8ex\ht0=.8ex\relax\fbox{\box0}}}
\newcommand{\ccm}{{\odot{c}}}
\newcommand{\ccb}{c_{\bullet}}
\newcommand{\ccw}{c_{\circ}}
\newcommand{\ccmb}{{\odot{c_{\bullet}}}}
\newcommand{\ccmw}{{\odot{c_{\circ}}}}
\newcommand{\ccc}{c_\boxplus}
\newcommand{\smallc}{\mathrm{sc}} 
\renewcommand{\d}{\mathrm d} 
\newcommand{\si}[1]{\texttt{(#1)}} 
\newcommand{\id}{\operatorname{id}}
\newcommand{\miq}[1]{\mathscr{M}(#1)}
\newcommand{\miqb}[1]{\mathscr{M}_\bullet(#1)}
\newcommand{\miqw}[1]{\mathscr{M}_\circ(#1)}
\newcommand{\lorlift}[1]{\ensuremath{#1^{\mathcal{L}}}}
\newcommand{\cyclift}[1]{\ensuremath{#1^{\mathcal{C}}}}
\newcommand{\Z}{\mathbb{Z}}
\newcommand{\R}{\mathbb{R}}
\newcommand{\C}{\mathbb{C}}
\newcommand{\RP}{{\mathbb{R}\mathrm{P}}}
\newcommand{\centersOf}[1]{\ensuremath{\odot(#1)}} 
\newcommand{\RN}[1]{%
	\textup{\uppercase\expandafter{\romannumeral#1}}%
}
\newcommand{\sca}[1]{\left<#1\right>} 
\newcommand{\lorsca}[1]{\left<#1\right>} 
\newcommand{\cycsca}[1]{\left<#1\right>_{2, 2}} 
\newcommand{\mobsca}[1]{\left<#1\right>_{3, 2}} 
\setlist[enumerate]{itemsep=2pt, topsep=0pt}
\title{Discrete Lorentz surfaces and s-embeddings I:\\ isothermic surfaces}
\date{\today}
\author{
	Niklas Christoph Affolter\thanks{TU Berlin, Institute of Mathematics, Straße des 17.~Juni 136, 10623 Berlin, Germany.
	\textit{E-mail addresses}: \texttt{affolter~at~posteo.net, smeenk~at~math.tu-berlin.de}}\ \footnotemark[2] ,
  Felix Dellinger\thanks{TU Wien, Institut of Discrete Mathematics and Geometry, Wiedener Hauptstr. 8-10/104, A-1040 Vienna, Austria.
  \textit{E-mail addresses}: \texttt{felix.dellinger, christian.mueller, denis.polly~at~tuwien.ac.at}} ,\\
	Christian Müller\footnotemark[2] ,
	Denis Polly\footnotemark[2] ,
	Nina Smeenk\footnotemark[1] ,
}
\begin{document}

\maketitle

\begin{abstract}
  S-embeddings were introduced by Chelkak as a tool to study the conformal invariance of the thermodynamic limit of the Ising model. Moreover, Chelkak, Laslier and Russkikh introduced a lift of s-embeddings to Lorentz space, and showed that in the limit the lift converges to a maximal surface. They posed the question whether there are s-embeddings that lift to maximal surfaces already at the discrete level, before taking the limit. This paper is the first in a two paper series, in which we answer that question in the positive.   
  In this paper we introduce a correspondence between s-embeddings (incircular nets) and congruences of touching Lorentz spheres. This geometric interpretation of s-embeddings enables us to apply the tools of discrete differential geometry. We identify a subclass of s-embeddings -- isothermic s-embeddings -- that lift to (discrete) S-isothermic surfaces, which were introduced by Bobenko and Pinkall. S-isothermic surfaces are the key component that will allow us to obtain discrete maximal surfaces in the follow-up paper. Moreover, we show here that the Ising weights of an isothermic s-embedding are in a subvariety. 
\end{abstract}

\newpage

\setcounter{tocdepth}{1}
\tableofcontents

\newpage

\label{sfc:bla}\label{sed:bla}
\label{lfm:bla}\label{len:bla}
\label{ti:bla}
\label{dff:bla}\label{deg:bla}
\label{er:bla}


\section{Introduction}

The goal of this paper is to relate developments in discrete
differential geometry to recent developments in statistical
mechanics. We will therefore quickly introduce both fields and start with discrete differential geometry.

The aim of \emph{discrete differential geometry} is to develop discrete versions of results of differential geometry, such that the results are not just approximating (in a numerical sense) but also \emph{structure preserving}. Ideally, the discretized objects should satisfy discretized equations, exhibit discretized invariant quantities, have the same symmetry groups and should also have a discretized integrable transformation theory. For a general reference we refer to the book~\cite{bsddgbook}, or for a quick introduction to~\cite{bsorganizing07}.

A particularly successful area of discrete differential geometry is the study of \emph{isothermic surfaces}, which are surfaces for which a \emph{conformal} curvature line parametrization exists. A curvature line parametrization is a parametrization that is \emph{conjugate} and \emph{orthogonal}. It turns out one can discretize these notions consistently. In particular, there is a theory of discrete conjugate nets \cite{sauerqnet, dsqnet}, which are quad meshes with planar faces. A special case of discrete conjugate nets are the so-called \emph{circular nets}~\cite{cdscircular, bobenkocircular}, which are discrete curvature line parametrizations, and are defined as quad meshes with circular faces. Furthermore, the special case of discrete conformal curvature line parametrizations known as \emph{discrete isothermic nets} was developed in \cite{bpisosurf, bpisoint}, where the cross-ratio of the four vertices of each face is $-1$. In analogy to the smooth theory, every discrete isothermic net has a discrete \emph{Christoffel dual} which is also a discrete isothermic net.

Another characterization of isothermic surfaces is that their curvature line parametrization is a \emph{K{\oe}nigs net}. A discrete theory for K{\oe}nigs nets was developed in \cite{doliwakoenigs, bskoenigs, doliwabquad}. Hence, one can combine these two discrete notions to obtain an a priori new definition of discrete isothermic nets \cite{bskoenigs}. However, it turns out that the new definition of discrete isothermic nets is actually equivalent to a slightly more general version of the old definition already present in \cite{bpisosurf, bpisoint}, where cross-ratio $-1$ is replaced with factorizing cross-ratio.

Furthermore, a slightly more rigid special case of isothermic nets called \emph{S-isothermic nets} (short for \emph{Schramm isothermic nets}) was introduced in \cite{bpisoint}. S-isothermic nets have proven very practical for the study of \emph{discrete minimal surfaces} \cite{bhsminimal}. S-isothermic nets may be defined as quad meshes with touching incircles. Moreover, every S-isothermic net comes with a sphere per vertex such that adjacent spheres are touching, they are also K{\oe}nigs and they can be subdivided into a discrete isothermic net.

\emph{Minimal surfaces} are surfaces with vanishing mean curvature. Minimal surfaces are also isothermic. In particular, a minimal surface is an isothermic surface such that its Christoffel dual is its Gauß map. The latter characterization may be used to define discrete minimal surfaces as a special case of discrete S-isothermic nets. Later on, it was shown that these discrete minimal surfaces have indeed vanishing discrete mean curvature \cite{lpwywconical, bpwcurvatureparallel, bsddgbook}. 

The theory of discrete isothermic nets was translated to various space forms \cite{bhjrscmc}, and to Lorentz space \cite{kwisolorentz, bhjrweingarten, yasumotomaximal}. This is relevant for us since we actually study an analogue of minimal surfaces in $\R^3$, namely spacelike maximal surfaces in $\R^{2,1}$. We study discrete S-isothermic nets in $\R^{2,1}$, which are defined completely analogously in terms of incidence geometry to discrete S-isothermic nets in $\R^3$, and are also considered in detail in an upcoming publication \cite{bhscmc}.

We have only given a limited exposition of the developments in discrete isothermic and minimal surface theory. Notably, there is another theory of discrete curvature line parametrizations called \emph{conical nets} \cite{lpwywconical}, and its special case of \emph{S-conical nets} \cite{bhsconical, bkhssconical}, as well as the corresponding minimal surfaces. A unifying perspective on discrete minimal surface theory via discrete holomorphic quadratic differentials can be found in \cite{lamcritical, lpquaddiff}. There are also new developments unifying \cite{bsorganizing07} and generalizing \cite{dellingercheckerboard, atbinets} the two discretizations (circular and conical) of curvature line parametrizations. Additionally, there is work on discrete isothermic and discrete minimal surfaces which are \emph{not} represented by discrete principal parametrizations \cite{lamcritical, hsfwparamsurf, pwkrpmsymmetricmeshes, hksycmcongeneralgraphs, hsfsskew, hsfwbonnet}.
Also the so-called associated family in \mbox{\cite{bhsminimal}} is not given in principal parametrizations. Moreover, there is also rich literature on discrete isothermic surfaces with not necessarily vanishing constant mean curvature \cite{bpisoint, hjhpdarboux, bsddgbook, bhjrscmc, bhsconical, bhscmc}.

After this brief introduction into the topics of discrete differential geometry that are relevant for our paper, we will now continue with the related recent developments in statistical mechanics.

We start with the so-called \emph{isoradial graphs} \cite{ksisoradialembeddings}. They were used to prove conformal invariance of the Ising model \cite{smirnovconformalrandomcluster, csising}. They were also used to study the dimer and spanning tree model and the theory of discrete harmonic and discrete holomorphic functions \cite{kenyonisoradial}. The latter is considered to be the \emph{linear} discrete analytic function theory from the perspective of discrete differential geometry \cite{bmsanalytic}. There is a whole avalanche of research using isoradial graphs, and we refer the reader to the survey paper \cite{bdtisosurvey}. More recently, a generalization of isoradial graphs, to so-called \emph{s-embeddings} \cite{chelkaksembeddings,chelkaksgraphs}, was introduced specifically to investigate the Ising model. These s-embeddings are maps from quad graphs to the plane such that the image of each quad has an incircle. As such, they are also generalizations of \emph{incircular nets} \cite{abconfocal, bstincircular}, which were studied in the discrete differential geometry community. Subsequently, a more general type of maps was used to study the dimer model, the so-called \emph{t-embeddings} \cite{amiquel, klrr, clrdimer}. These t-embeddings are also known in the discrete differential geometry community as \emph{(planar) conical nets} \cite{muellerconical}.

It was also shown in \cite{athesis}, that s- and t-embeddings are closely related to discrete conjugate nets and discrete line congruences, at least from an algebraic and discrete integrable point of view. However, what was missing so far is if and how these embeddings relate to the discrete theory of isothermic or minimal surfaces. This question got even more intriguing with recent progress showing that s-embeddings can be lifted to certain discrete surfaces in $\R^{2,1}$ that converge to maximal surfaces in the thermodynamic limit \cite{chelkaksgraphs, clrpembeddings,clrdimer} (note that they call them minimal surfaces, which is a slight misnomer). These lifts have recently also been used in a number of other papers \cite{cimuniversality,mahfoufthesis,mahfoufcrossingestimates, bnrtembaztec, bnrtemblozenge, craztec}. In fact, in \cite{clrpembeddings} they posed the question whether there are some s-embeddings such that their lifts can be interpreted as discrete maximal surfaces, before taking limits. 

This paper is part of a two-paper series (with \cite{admpsmaximal}), the main result of the two papers is to show that the answer is \emph{yes}: there are s-embeddings that are related to discrete maximal surfaces. In this paper we pave the way by introducing a special class of s-embeddings which we call \emph{isothermic s-embeddings}. An isothermic s-embedding is characterized by the fact that there is a second s-embedding with the same incircles. We show that isothermic s-embeddings are exactly those s-embeddings that lift to S-isothermic nets in $\R^{2,1}$.
As a consequence, each isothermic s-embedding comes with a Christoffel dual partner which is also an isothermic s-embedding (but does not have the same incircles).

Moreover, we show that the $X$-variables that define the corresponding Ising model are in a subvariety that we call the \emph{isothermic subvariety}. Thereby, we also show that not every (planar) Ising model can be realized as an isothermic s-embedding.

We also discuss the symmetry under transformations of the objects we introduce. 

Isothermic surfaces are \emph{Möbius invariant}, as are the discrete S-isothermic surfaces. In fact, the lifts of s-embeddings are generally Möbius invariant, although it is unclear what the corresponding smooth statement would be. However, the $X$-variables are Lorentz invariant, but not Möbius invariant. This could imply that they are not the right quantities to describe isothermic s-embeddings. On the other hand, there are certain \emph{conformal $X$-variables} introduced in \cite{amiquelmobius} for t-embeddings that \emph{are} Möbius invariant, so they are an alternative candidate. Even more intriguingly, it turns out that the $X$-variables are more than Lorentz invariant, they are \emph{Laguerre invariant}. Unfortunately, while a theory of discrete Laguerre isothermic surfaces exists \cite{ppyweierstrass}, a theory of discrete Laguerre S-isothermic surfaces is yet to be discovered. Finally, let us note that the lifts of t-embeddings are actually \emph{Lie invariant}, thus one might also be interested in finding new $X$-variables that are Lie invariant as well.

In the follow-up paper \cite{admpsmaximal} we will specialize our results from discrete isothermic to discrete maximal surfaces and s-embeddings. Using the technique of \cite{bhsminimal}, we explain how discrete maximal surfaces are obtained from hyperbolic orthogonal circle patterns. We will also investigate the 1-parameter associated family of a discrete maximal surface, which also provides an associated family of s-embeddings, and show that the $X$-variables are constant in this family.

Let us briefly discuss possible directions of future investigations. As mentioned, there is also the notion of discrete S-conical nets \cite{bhsconical,bkhssconical}, another discretization of isothermic surfaces in curvature line parametrization.
In fact, an S-conical net is -- similar to an S-isothermic net -- also defined by two families of touching spheres with certain additional constraints. Thereby, it is immediately clear from our results that each S-conical net corresponds to a pair of s-embeddings, but we did not (yet) investigate this direction further. Additionally, there are also definitions for discrete constant mean curvature surfaces both in the S-isothermic and in the S-conical setup. Both these classes of surfaces will also correspond to special classes of s-embeddings, another direction of research that appears to be promising.

Moreover, our geometric construction of the lift of isothermic s-embeddings via touching spheres in $\R^{2,1}$ actually works for general s-embeddings and even t-embeddings, not only isothermic s-embeddings. However, in this case we do not know if these lifts correspond to special surfaces or parametrizations, or if there are interesting other special cases. In particular, it would be interesting if the lift of harmonic embeddings \cite{klrr, tutteembedding}, also known as Tutte embeddings, corresponds to special surfaces or if there is a special case that does.

\subsection{Plan of the paper}

In Section~\ref{sec:mainresults} we lay out the main results including the main definitions and theorems. In Section~\ref{sec:lorentzgeometry} we briefly recall the necessary preliminaries of Lorentz geometry and the associated sphere geometries: Möbius, Laguerre and Lie geometry. In Section~\ref{sec:cccp} we explain the key relation between certain sphere congruences -- called contact congruences -- in three-dimensional Lorentz space and circle patterns (and t-embeddings) in two-dimensional Euclidean space. In Section~\ref{sec:cyclift} we explain how contact congruences are understood as a direct geometric interpretation of the origami lift introduced by Chelkak, Laslier and Russkikh -- without the need to consider $\R^{2,2}$. In Section~\ref{sec:laguerrecp} we provide a Laguerre geometric version of circle patterns that we call cycle patterns. In Section~\ref{sec:nullcongruences} we introduce null congruences, a special case of contact congruences that correspond to incircular nets (s-embeddings) as a special case of circle patterns (t-embeddings). In Section~\ref{sec:miquel} we explain Miquel dynamics, which enable us to characterize isothermic congruences in Section~\ref{sec:isothermiccongruences}. In Section~\ref{sec:isothermicnets} we show that isothermic congruences correspond to S-isothermic nets. In Section~\ref{sec:xvariables} we show that for isothermic incircular nets the X-variables, the variables that define the associated Ising model, are in a subvariety. In Section~\ref{sec:transformations} we discuss the transformation behaviour of the objects we introduced.

\subsection*{Acknowledgments}

N.C.~Affolter and N.~Smeenk were supported by the Deutsche Forschungsgemeinschaft (DFG) Collaborative Research Center TRR 109 ``Discretization in Geometry and Dynamics''.  N.C.~Affolter was also supported by the ENS-MHI chair funded by MHI. 
F.~Dellinger and C.~M{\"u}ller gratefully acknowledge the support by the Austrian Science Fund (FWF) through grant I~4868 (SFB-Transregio “Discretization in Geometry and Dynamics”) and project F77 (SFB “Advanced Computational Design”). D.~Polly was supported by the French National Agency for Research (ANR) via the grant ANR-18-CE40-0033 (DIMERS) for an inspiring research visit to Paris in the fall of 2021.

N.C.~Affolter would like to thank Dmitry Chelkak for pushing for the development of a discrete theory of maximal s-embeddings, as well as him and Misha Bashok and Rémy Mahfouf for countless explanations and discussions of the Lorentz lift. We would also like to thank Jan Techter, Cédric Boutillier, Boris Springborn, Carl Lutz, Paul Melotti, Sanjay Ramassamy, Marianna Russkikh. 



\section{Main results} \label{sec:mainresults}

\begin{figure}
	\centering
	\fbox{
	\begin{overpic}[width=\linewidth]{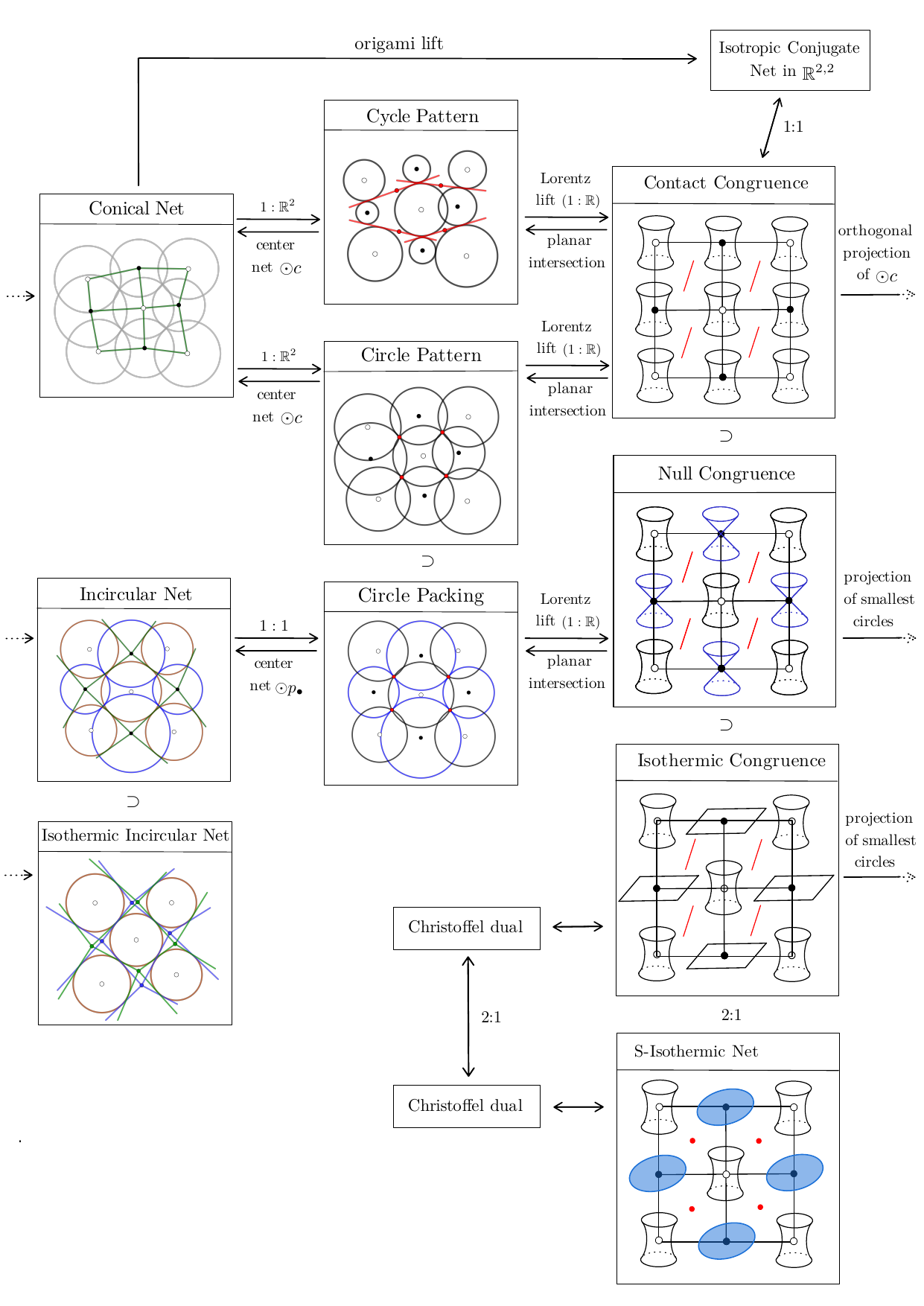}
		\put(34.5, 96.35){\scriptsize \cite{clrpembeddings}}
		\put(58.4, 19.1){\footnotesize \cite{bhsminimal}}
	\end{overpic}}
	\caption{An overview of the relations between our main geometric objects (see Section 2).}
	\label{fig:overview}
\end{figure}

In this paper we consider discrete surfaces in \emph{Lorentz space $\lor = \R^{2,1}$}. Due to the Minkowski metric two different points $P,P' \in \lor$ may have positive, zero or negative squared distance. A quick introduction to Lorentz geometry is given in Section~\ref{sec:lorentzgeometry}. We denote the space of lines by $\li(\lor)$, of planes by $\pl(\lor)$, of circles by $\ci(\lor)$ and of spheres by $\sp(\lor)$. A subscript $+$, $0$ or $-$ refers to \emph{spacelike}, \emph{isotropic}, or \emph{timelike} objects, for example $\sp_+$ is the space of spacelike spheres. An arrow denotes a space of oriented objects, so $\osp$ is the space of \emph{oriented spheres}. Note that we call the spheres in $\sp_0(\lor)$ \emph{null-spheres}. Null-spheres are not oriented, but a null-sphere $S$ is considered to be in oriented contact with an oriented sphere $S'$ if the center of $S$ is in $S'$.

\begin{figure}
	\centering
  \includegraphics[width=.49\linewidth]{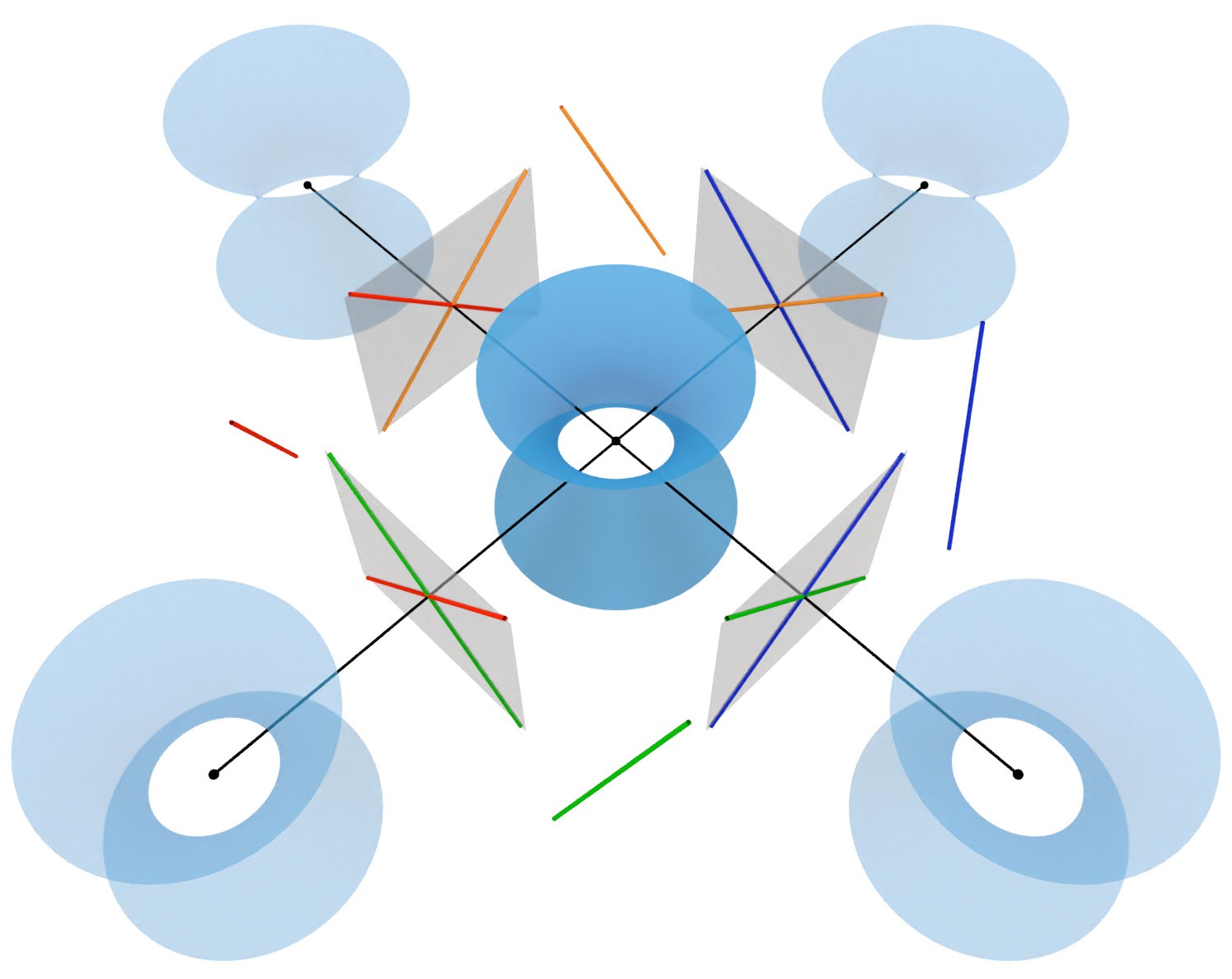}
	\includegraphics[width=.49\linewidth]{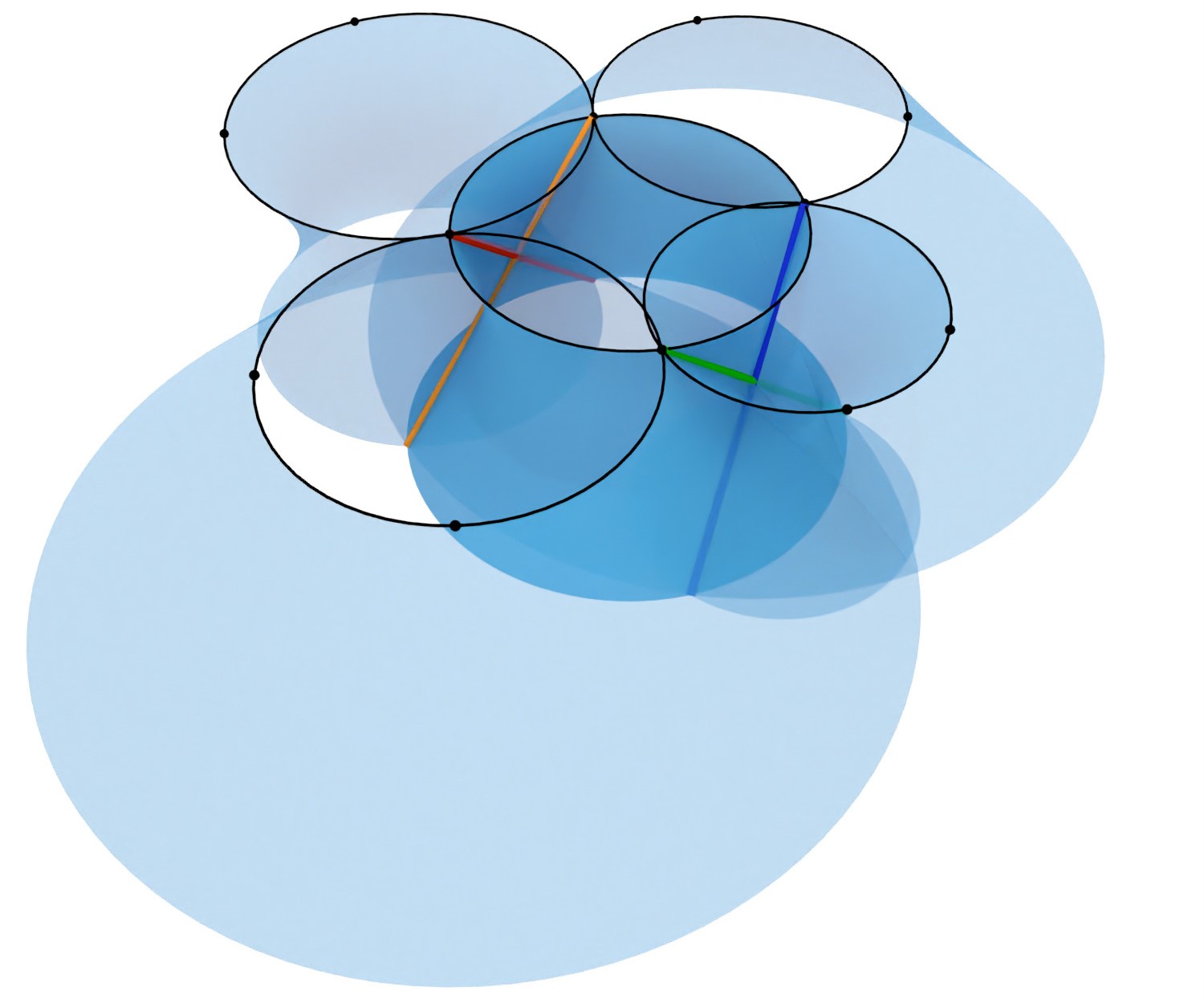}
	\caption{Left: the local combinatorics of a contact congruence. At each vertex there is a timelike sphere and at each face an isotropic line. Adjacent spheres share a tangent plane that contains two isotropic lines. Right: the corresponding geometric picture in Lorentz space $\lor$.}
	\label{fig:cc}
\end{figure}

\begin{definition} \label{def:contactcongruence}
	A \emph{contact congruence} is a pair of maps 
	\begin{align}
		\cc&: \Z^2 \rightarrow \osp_-(\lor) \cup \sp_0(\lor), &
		\ccf&: F(\Z^2) \rightarrow \oli_0(\lor),
	\end{align}
  such that the oriented sphere $\cc(v)$ is in oriented contact with the oriented isotropic line $\ccf(f)$ whenever $v$ and $f$ are incident, see also Figure~\ref{fig:cc} and Figure~\ref{fig:cccp}.
	The \emph{center net} (of a contact congruence $\cc$) is a map $\ccm: \Z^2 \rightarrow \lor$, such that $\ccm(v)$ is the center of the sphere $\cc(v)$. 
\end{definition}

Note that this definition implies that $\cc(v)$ contains $\ccf(f)$ whenever $v$ and $f$ are incident, see Section~\ref{sec:laguerre} for details on oriented contact. Moreover, the definition implies that around each face the four spheres are in pairwise oriented contact (so there are six pairwise contacts).

Let us define another basic type of map that will be useful throughout.

\begin{definition}\label{def:conjugatenet}
	A map $x: \Z^2 \rightarrow \R^n$ is a \emph{conjugate net} \cite{sauerqnet, dsqnet, bsddgbook} if the image of the vertices of each face $f\in F(\Z^2)$ are contained in a plane. We denote this plane by $x(f)$. 
\end{definition}

A conjugate net $x: \Z^2 \rightarrow \lor$ is called \emph{spacelike}, \emph{isotropic} or \emph{timelike} if all its planes are of that type.
In particular, a map $\ccm: \Z^2 \rightarrow \lor$ is a center net of a contact congruence $\cc$ if and only if $\ccm$ is an isotropic conjugate net (see Lemma~\ref{lem:centerNetIsIsoConjugate}).
Thus, we may think of the center net $\ccm$ of a contact congruence $\cc$ as a \emph{discrete surface} in $\lor$, and of $\cc$ as being defined by $\ccm$ and the choice of one oriented isotropic line $\ccf(f_0)$ (in the corresponding plane of the isotropic conjugate net $\ccm$) for some $f_0\in F$.

Let $\eucl \simeq \R^2$ be the Euclidean plane, which we embed into $\lor$ as
\begin{align}
	\eucl = \{x \in \lor \ | \ x_3 = 0  \}.
\end{align}

\begin{definition}\label{def:circlepattern}
	A \emph{circle pattern} is a pair of maps 
	\begin{align}
		\cp&: \Z^2 \rightarrow \ci(\eucl), & \cpf&: F(\Z^2) \rightarrow \eucl,
	\end{align}
	such that the circle $\cp(v)$ contains the point $\cpf(f)$ whenever $v$ and $f$ are incident. The \emph{conical net} (of a circle pattern~$\cp$) is the map $\cpm: \Z^2 \rightarrow \eucl$, where $\cpm(v)$ is the center of $\cp(v)$.
\end{definition}

The predecessor of circle patterns were \emph{circle packings}. Koebe \cite{koebecp} and Thurston \cite{thurstoncp} introduced circle packings as discretizations of conformal maps. A good introduction to circle packings is the monograph by Stephenson \cite{stephensoncp}. The generalization to circle patterns with arbitrary intersection angles was started by Thurston \cite[Section 13.7]{thurstonlecturenotes} and Rivin \cite{rivin}. Conical nets in the plane were studied in the context of discrete differential geometry \cite{muellerconical}, and have recently been related to the \emph{dimer model} from statistical physics \cite{amiquel, klrr, clrdimer}. In that context, conical nets are known as \emph{t-embeddings} \cite{clrdimer} or \emph{Coulomb gauge} \cite{klrr}, with some additional embedding constraints that we do not require. 

Note that for a conical net $x$ there is a two-parameter family of circle patterns $\cp$ with $\cpm = x$. Specifically, $\cp$ is determined by the choice of one point $\cpf(f_0)$ for some $f_0 \in F(\Z^2)$.

\begin{theorem}\label{th:cctocp}
  Every contact congruence $\cc$ defines a circle pattern $\cp$ via the intersection
	\begin{align}
		\cp(v) &= \cc(v) \cap \eucl, & \quad \cpf(f) &= \ccf(f) \cap \eucl,
	\end{align}
	see also Figure~\ref{fig:cccp}.
	The conical net $\cpm$ is the orthogonal projection of the center net $\ccm$.
\end{theorem}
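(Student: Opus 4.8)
The plan is to verify the three conditions that make $(\cp,\cpf)$ a circle pattern in the sense of Definition~\ref{def:circlepattern}: that each $\cpf(f)$ is a single point of $\eucl$, that each $\cp(v)$ is a genuine circle in $\eucl$, and that the incidence ``$\cpf(f)\in\cp(v)$'' holds whenever $v$ and $f$ are incident; the statement about the conical net will fall out of the computation of the circle's center. The only structural input beyond linear algebra is the remark following Definition~\ref{def:contactcongruence}: oriented contact of $\cc(v)$ with $\ccf(f)$ forces the isotropic line $\ccf(f)$ to be \emph{contained} in the sphere $\cc(v)$.

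First I would treat the faces. An oriented isotropic line $\ccf(f)\in\oli_0(\lor)$ has a null direction $d$, i.e.\ $\langle d,d\rangle=0$; writing $d=(d_1,d_2,d_3)$ this reads $d_3^2=d_1^2+d_2^2$, so $d_3\neq 0$ for $d\neq 0$. Hence $\ccf(f)$ is transverse to the spacelike plane $\eucl=\{x_3=0\}$ and is not contained in it, so $\ccf(f)\cap\eucl$ is exactly one point, namely $\cpf(f)$. Next the vertices: every sphere $\cc(v)$ has a center $m=\ccm(v)$ and a representation $\langle x-m,x-m\rangle=\rho$ for its signed squared radius $\rho$, with $\rho=0$ precisely for null-spheres. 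Restricting to $\eucl$ by substituting $x_3=0$ gives
\[
  (x_1-m_1)^2+(x_2-m_2)^2=\rho+m_3^2,
\]
the equation of a Euclidean circle centered at $(m_1,m_2)$ with squared radius $\rho+m_3^2$. Its center $(m_1,m_2)$ is precisely the orthogonal projection of $m=\ccm(v)$ onto $\eucl$ (the Lorentz-orthogonal complement of the spacelike plane $\eucl$ is the timelike axis $\R e_3$, along which this projection agrees with the Euclidean one). Since $\cpm(v)$ is by definition the center of $\cp(v)$, this already shows $\cpm$ is the orthogonal projection of $\ccm$.

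It remains to see that $\cp(v)$ is a \emph{genuine} circle, i.e.\ that $\rho+m_3^2>0$, and this is where I expect the only real care to be needed. For timelike spheres it follows from the causal classification recalled in Section~\ref{sec:lorentzgeometry}, and for null-spheres it reduces to $m_3\neq 0$, i.e.\ the apex lying off $\eucl$. Alternatively, one can argue nondegeneracy \emph{a posteriori} from incidence: the points $\cpf(f)$ attached to the faces around $v$ all lie on $\cp(v)$, and as soon as two of them are distinct the conic above cannot be a point or empty and is therefore a circle.

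Finally I would transfer the incidence. Let $v$ and $f$ be incident. By the remark following Definition~\ref{def:contactcongruence} we have $\ccf(f)\subseteq\cc(v)$, and intersecting both sides with $\eucl$ yields
\[
  \cpf(f)=\ccf(f)\cap\eucl\ \subseteq\ \cc(v)\cap\eucl=\cp(v),
\]
so the point $\cpf(f)$ lies on the circle $\cp(v)$. Combined with the previous two paragraphs, this is exactly the defining data and incidence condition of a circle pattern, so $(\cp,\cpf)$ is a circle pattern whose conical net is the projection of $\ccm$. The main obstacle, as noted, is the nondegeneracy of the circles; every remaining step is a direct translation of ``contains the line'' into ``contains the point'' under intersection with $\eucl$.
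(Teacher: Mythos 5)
Your proof is correct and follows essentially the same route as the paper's (which simply asserts that the intersections ``clearly'' yield circles and points, transfers incidence via $\ccf(f)\subset\cc(v)$, and deduces the center claim from the rotational symmetry of $\cc(v)$ about the vertical axis through $\ccm(v)$). Your version merely fills in the coordinate computations the paper omits and explicitly flags the degenerate case of a null-sphere with apex on $\eucl$, which the paper glosses over.
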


Conversely, for every circle pattern $\cp$ there is a one-parameter family of contact congruences $\cc$ that intersect $\eucl$ in $\cp$, the parameter is for example the (oriented) radius of a sphere $\cc(v_0)$. In this context, we call the contact congruence $\cc$ the \emph{Lorentz lift} $\lorlift\cc$ of the circle pattern $\cp$. Analogously, for every conical net $x$ there is a three-parameter family of contact congruences $\cc$ such that $\cpm$ is the orthogonal projection of $\ccm$, for example by choosing an initial isotropic line $\ccf(f_0)$ for some $f_0 \in F(\Z^2)$.

\begin{figure}
	\centering
	\includegraphics[width=.75\linewidth]{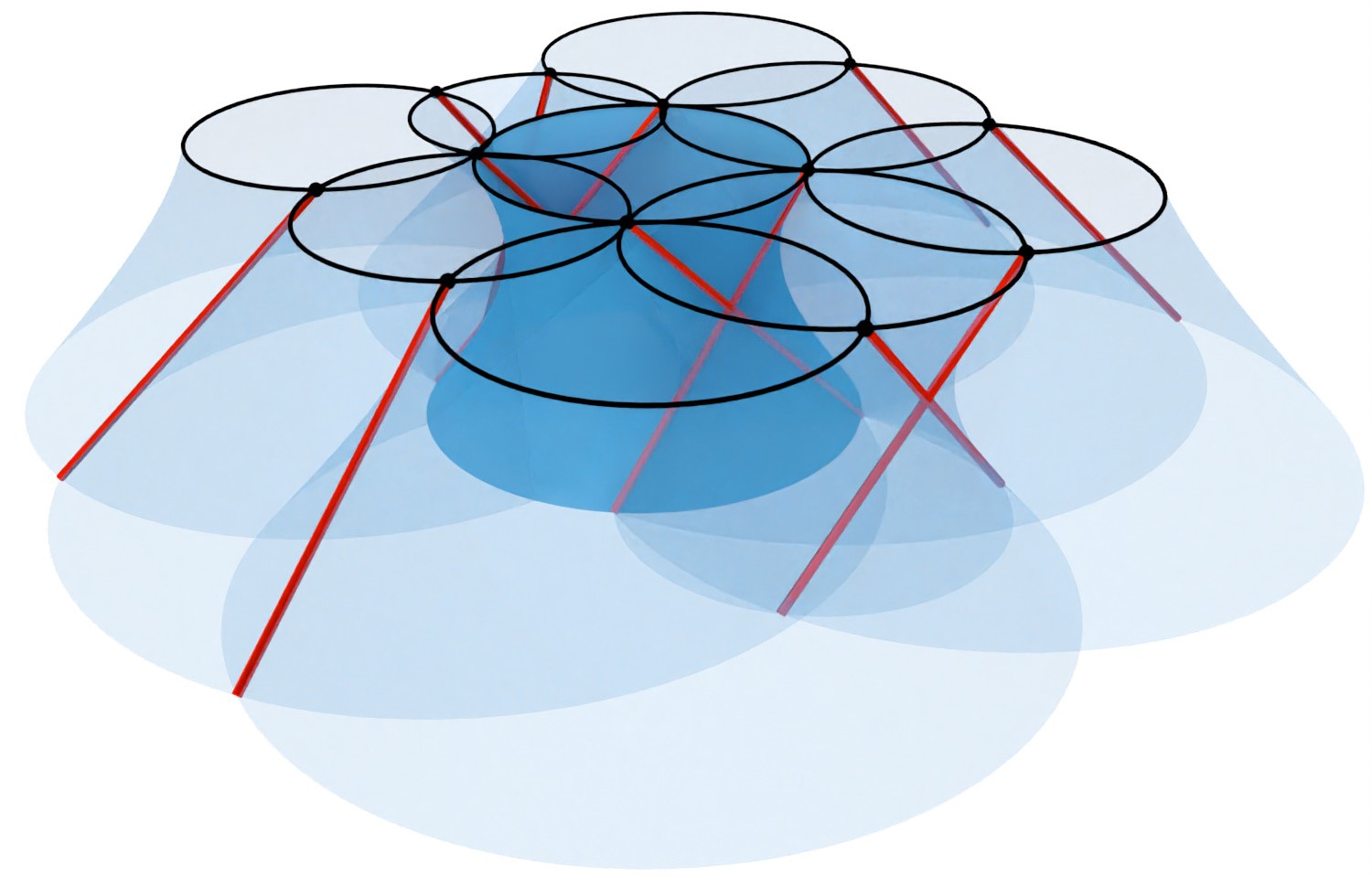}
	\caption{The intersection of a contact congruence with a plane is a circle pattern.}
	\label{fig:cccp}
\end{figure}

\begin{remark}
	We restrict ourselves to contact congruences and circle patterns defined on $\Z^2$ throughout the paper, to keep the exposition focused on the geometry. However, one verifies without difficulty that all the constructions and theorems that involve general contact congruences or circle patterns hold if the combinatorics of $\Z^2$ is replaced by the combinatorics of an arbitrary locally finite, planar and bipartite graph.
\end{remark}

In Laguerre geometry \cite{laguerrelaguerre, blptlaguerre}, there is the so-called cyclographic model $\cyc \simeq  \R^{2,2}$: every point $P \in \cyc$ corresponds to a unique oriented timelike sphere or null-sphere $\xi_{\osp}(P)$ in $\lor$ and vice versa, see Section~\ref{sec:laguerre}. We may therefore consider the cyclographic lift $\cyclift{\cc}$ of a contact congruence $\cc$. Via the cyclographic lift we show in Section~\ref{sec:cyclift} that contact congruences are in a bijective relation with fully isotropic conjugate nets in $\cyc$. Since we may lift a conical net $\cpm$ to a contact congruence $\cc$, we also think of $\cyclift{\cc}$ as a cyclographic lift of $\cpm$.
It turns out that the cyclographic lift of a conical net coincides with the $\R^{2,2}$-lift of a conical net as defined using the origami map by Chelkak, Laslier and Russkikh \cite{clrdimer}. 

In this light, what we have provided so far is an interpretation of the \cite{clrdimer} origami-lift as a discrete surface in $\lor$, by viewing $\R^{2,2}$ as the cyclographic model of Laguerre geometry. Moreover, we interpret the isometries of $\R^{2,2}$ as the Laguerre transformations of $\lor$.

Conical nets have recently received attention in the field of statistical mechanics, in particular in the study of the dimer model (see \cite{kenyondimerintro} for an introduction to the dimer model). It was shown in \cite{amiquel, klrr} that one may associate a dimer model to a conical net, and this approach has been further developed in \cite{clrdimer,clrpembeddings}. The quantities that define the dimer model on a conical net $\cpm$ are the \emph{$X$-variables}
\begin{align}
	X: \Z^2 \rightarrow \R, \quad  v \mapsto \frac{|\cpm(v_1)-\cpm(v)||\cpm(v_3)-\cpm(v)|}{|\cpm(v_2)-\cpm(v)||\cpm(v_4)-\cpm(v)|},
\end{align}
where $v_1, v_2,v_3, v_4$ are the vertices adjacent to $v$.
Borrowing an (apparently unpublished) observation of Chelkak \cite{chelkakliftxvars}, one may show that the $X$-variables are also (the square of) the ratio of two distances of the cyclographic lift $\cyclift{\cpm}$ of a conical net $\cpm$, specifically
\begin{align}
	X(v) = \frac{|\cyclift{p}(v_1)-\cyclift{p}(v_3)|^2_{2,2}}{|\cyclift{p}(v_2)-\cyclift{p}(v_4)|^2_{2,2}}.
\end{align}
In Laguerre geometry, this means that we may read of the $X$-variables from the Lorentz lift $\lorlift{\cp}$ of a conical net as the square of the tangential distance of $\lorlift\cp(v_1)$ to $\lorlift\cp(v_3)$ divided by the square of the tangential distance of $\lorlift\cp(v_2)$ to $\lorlift\cp(v_4)$.

We now turn towards special cases of circle patterns and contact congruences, which will lead us to discrete isothermic surfaces.

For the following, let us consider the bipartition of $\Z^2$ into black vertices $\Zb^2$ and white vertices $\Zw^2$, that is
\begin{align}
	\Zb^2 &= \{z \in \Z^2\ | \ z_1 + z_2 \in 2 \Z  \}, & \Zw^2 &= \{z \in \Z^2\ | \ z_1 + z_2 \in 2 \Z + 1 \}.
\end{align}
Note that $\Zb^2 \simeq \Zw^2 \simeq \Z^2$, therefore we may also speak of black (white) edges, that is edges of $\Zb^2$ ($\Zw^2$). Note that $E(\Zb^2)$ is in direct bijection with $F(\Z^2)$ and with $E(\Zw^2)$. We also see that $F(\Zb^2)$ is in direct bijection with $\Zw^2$, and $F(\Zw^2)$ is in direct bijection with $\Zb^2$.

Let us denote by $\cpb$ and $\cpw$ the two restrictions of a circle pattern $\cp$ to $\Zb^2$ and $\Zw^2$ respectively.

\begin{figure}
	\centering
	\includegraphics[width=.49\textwidth]{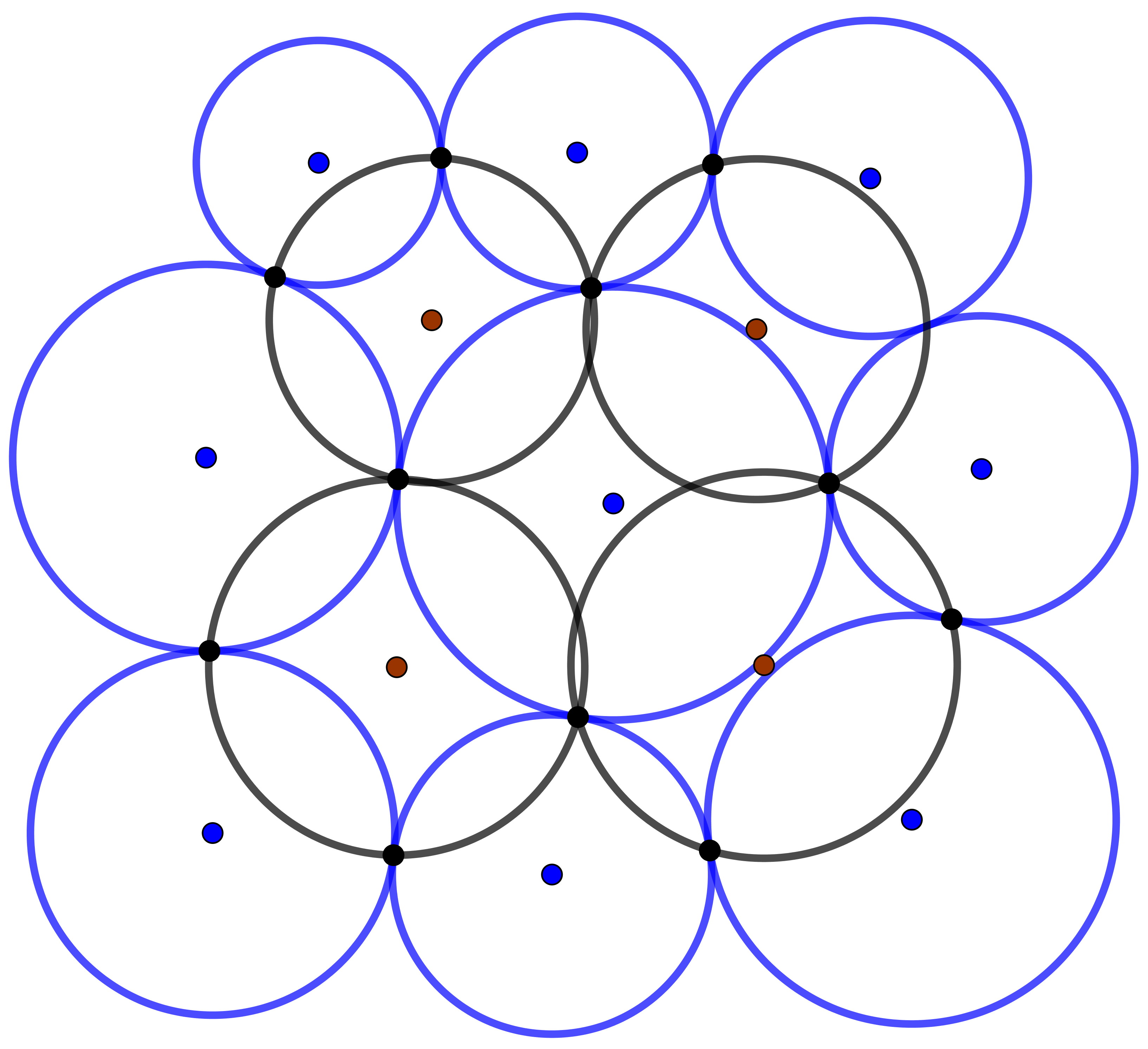}
	\hspace{0mm}
	\includegraphics[width=.49\textwidth]{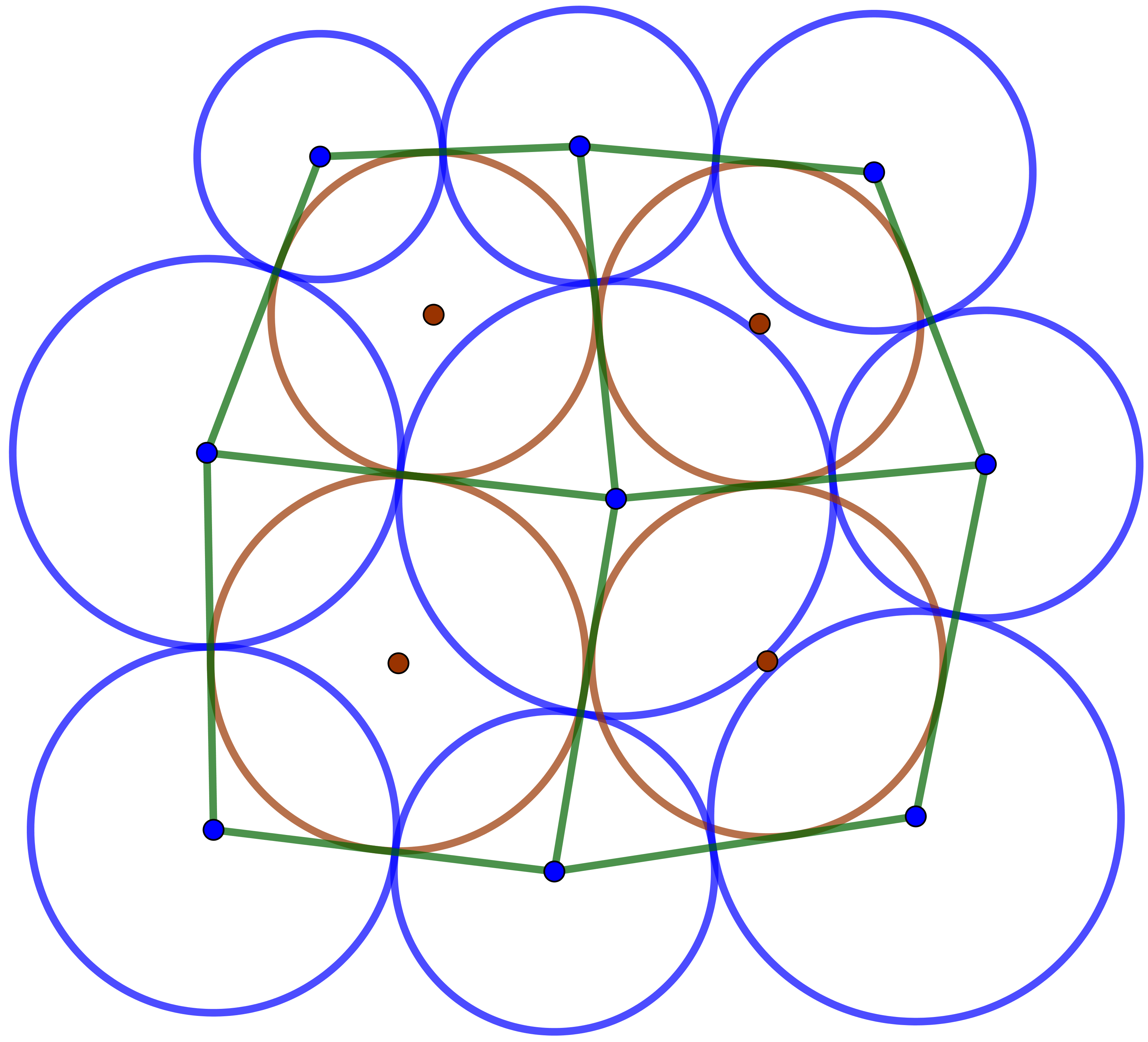}
	\caption{A circle packing $\cp$ (with $\cpb$ in blue and $\cpw$ in black) and the corresponding incircular net $\cpmb$ (green), with incircles (brown).}
	\label{fig:circlepacking}
\end{figure}

\begin{definition}\label{def:circlepacking}
  A \emph{circle packing} $\cp$ is a circle pattern such that for any two black vertices $b,b' \in \Zb^2$ incident with a common face of $\Z^2$ the two circles $\cp(b)$ and $\cp(b')$ are touching (see also Figure~\ref{fig:circlepacking}).
\end{definition}

It is the restriction to $\cpb$ (of what we call a circle packing $p$) that is usually called a circle packing (see \cite{stephensoncp}). However, generically every circle packing $\cpb$ (with $\Z^2$ combinatorics) defines a unique circle pattern $\cp$ that restricts to $\cpb$, due to the \emph{touching coins lemma} \cite{bhsminimal}. We choose to call the whole of $\cp$ a circle packing, so as not to introduce new terms unnecessarily.

Every quad in $\cpmb$ has an incircle and the incircle centers are given by $\cpmw$. Therefore we also call $\cpmb$ an \emph{incircular net}. Note that the incircles of $\cpmb$ are not the circles of $\cpw$ though.

\begin{remark}
  A special case of incircular nets with straight parameter lines was related to confocal conics in \cite{gsfastrhombisch,abconfocal,bstincircular}. In the context of Euclidean Laguerre geometry this resulted in new incidence theorems. We do not study this special case in the following, but it is not hard to see that corresponding contact congruences are \emph{flat}, in the sense that the centers of the spheres are all contained in a plane. Moreover, \cite{gsfastrhombisch} introduced a generalization of incircular nets to $\R^3$ and showed that this discretizes surface parametrizations with a first fundamental form such that the diagonal entries are equal. 
\end{remark}

In statistical mechanics, the notion of an s-embedding \cite{chelkaksembeddings, chelkaksgraphs} was introduced in relation to the Ising model (for introductions to the Ising model see \cite{dcsconformalinvariance, baxterbook}). An s-embedding is an incircular net with additional embeddedness constraints, which we do not require.

Let us denote by $\ccb$ and $\ccw$ the two restrictions of a contact congruence $\cc$ to $\Zb^2$ and $\Zw^2$ respectively.

\begin{definition} \label{def:nullcongruence}
	A \emph{null congruence} is a contact congruence $\cc$ such that 
	\begin{enumerate}
    \item the image of $\ccb$ is in $\sp_0$,
		\item the image of $\ccw$ is in $\osp_-$.
	\end{enumerate}
  In other words, the radii of all black spheres are zero (see also Figure~\ref{fig:nullcongruence}).
\end{definition}

\begin{figure}
	\centering
	\includegraphics[height=.45\textwidth]{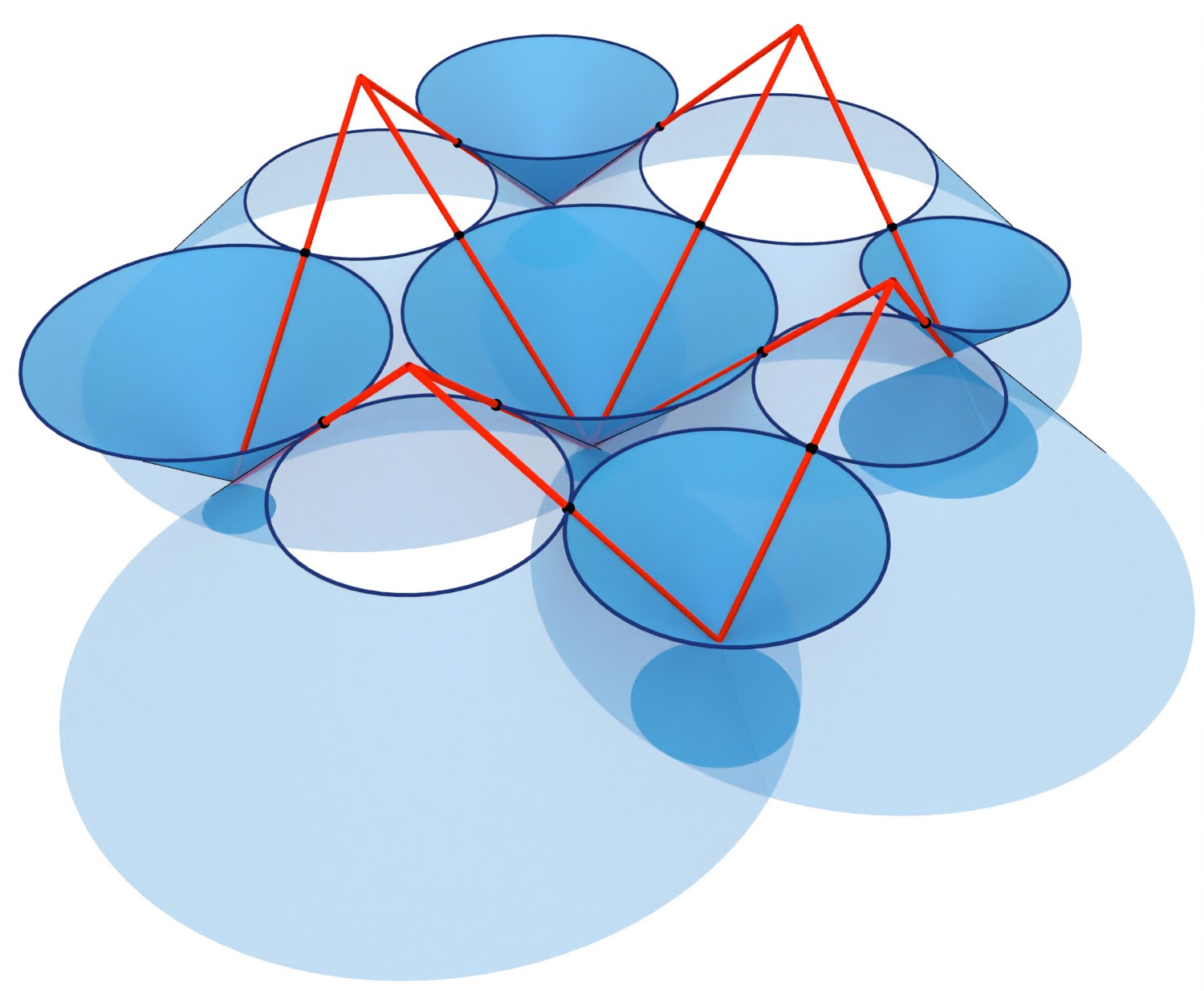}
	\includegraphics[height=.45\textwidth]{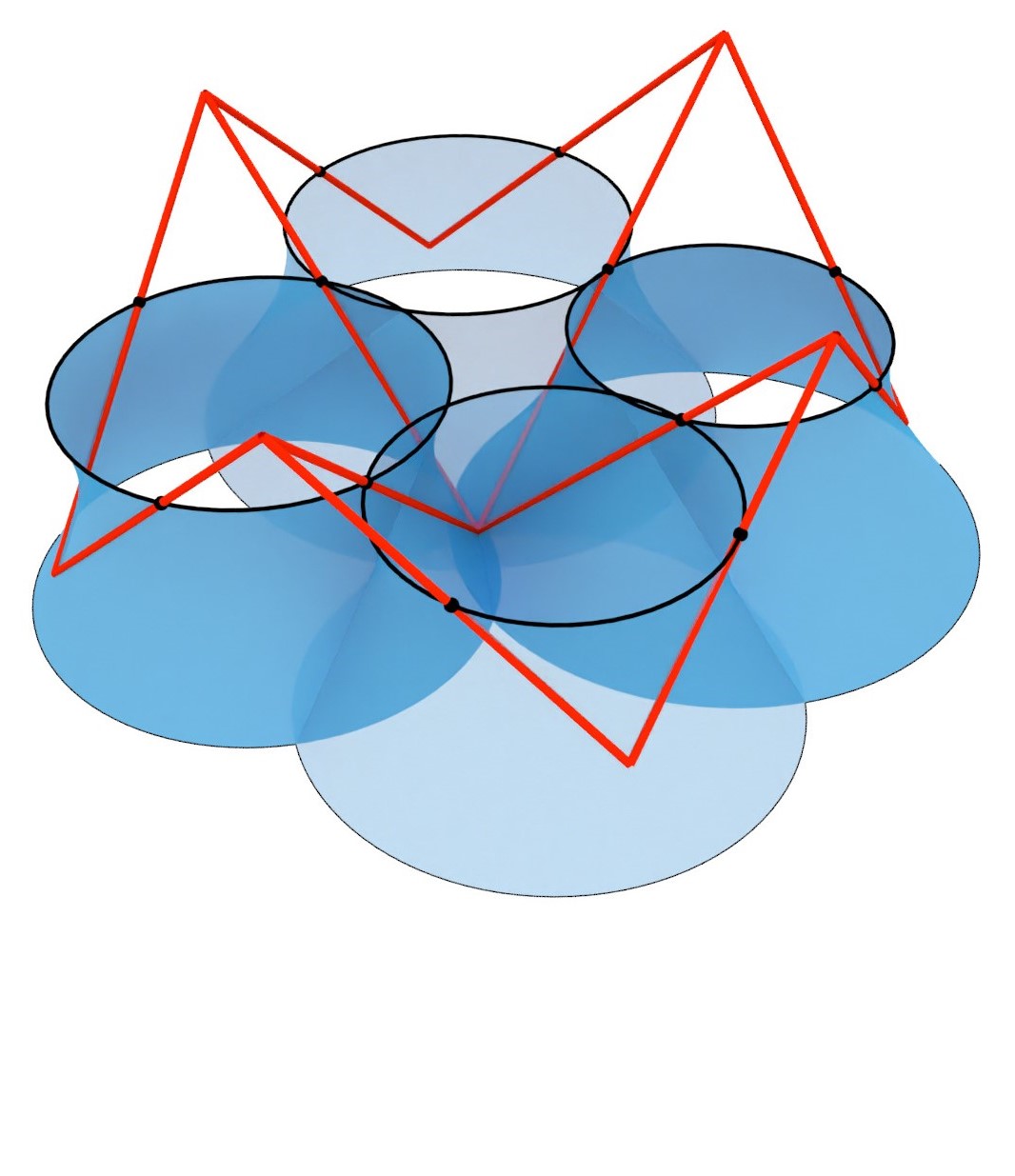}
	\caption{The null-spheres $\ccb$ (left) and time-like spheres $\ccw$ (right) of a null congruence $\cc$ as well as the isotropic lines $\ccf$ (red).}
	\label{fig:nullcongruence}
\end{figure}

\begin{theorem}\label{th:nullcctopacking}
	The circle pattern associated to a null congruence via Theorem~\ref{th:cctocp} is a circle packing.
\end{theorem}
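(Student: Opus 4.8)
The plan is to work directly with the geometry of the null-spheres and the shared isotropic line, reducing the tangency of the two black circles to a one-dimensional computation along a null line. Fix a face $f \in F(\Z^2)$ and let $b, b' \in \Zb^2$ be the two black vertices incident with $f$. By the definition of a contact congruence (see the discussion following Definition~\ref{def:contactcongruence}) both spheres $\cc(b)$ and $\cc(b')$ contain the isotropic line $\ccf(f)$. Since $\cc$ is a null congruence, $\cc(b), \cc(b') \in \sp_0$, so they are light cones (null-spheres of radius zero). The first key step is the observation that a light cone containing an isotropic line must have its apex on that line: writing the cone as $\{x : \sca{x - a, x - a} = 0\}$ and the line as $\{p + tv\}$, containment for all $t$ forces $v$ null, $\sca{p - a, v} = 0$, and $p - a$ null; in Lorentzian signature two orthogonal null vectors are parallel, hence $p - a \parallel v$ and so $a$ lies on the line. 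Consequently the centers $\ccm(b)$ and $\ccm(b')$ both lie on the single isotropic line $\ccf(f)$.

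Next I would record two elementary facts about intersecting a light cone with $\eucl = \{x \in \lor \mid x_3 = 0\}$. By Theorem~\ref{th:cctocp} the circle $\cp(b) = \cc(b) \cap \eucl$ has center $\cpm(b)$, the orthogonal projection of $\ccm(b)$; substituting $x_3 = 0$ into $\sca{x - \ccm(b), x - \ccm(b)} = 0$ shows that its radius equals $|\ccm(b)_3|$, the absolute value of the timelike coordinate of the apex. The same holds for $b'$.

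Finally I would parametrize. Write $\ccm(b') = \ccm(b) + t_0 v$ with $v = (v_1, v_2, v_3)$ null, so that $v_1^2 + v_2^2 = v_3^2$ and $v_3 \neq 0$. Set $h = \ccm(b)_3$ and $h' = \ccm(b')_3 = h + t_0 v_3$. Then the distance between the circle centers $\cpm(b), \cpm(b')$ equals $|t_0|\sqrt{v_1^2 + v_2^2} = |t_0 v_3| = |h' - h|$, while the radii are $|h|$ and $|h'|$. If $h$ and $h'$ have the same sign this distance equals $\big||h| - |h'|\big|$ (internal tangency), and if they have opposite signs it equals $|h| + |h'|$ (external tangency); in either case the two circles touch, which is exactly the circle-packing condition of Definition~\ref{def:circlepacking}. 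One also checks that the common point $\ccf(f) \cap \eucl = \cpf(f)$ lies on the line through the two centers, confirming that it is the point of tangency.

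The main obstacle is the lemma of the first paragraph — that the apex of a null-sphere lies on every isotropic line it contains — which rests on the Lorentzian fact that orthogonal null vectors are parallel; once this is in hand, the rest is the short computation along the null line together with the projection and radius formulas supplied by Theorem~\ref{th:cctocp}.
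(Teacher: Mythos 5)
Your proof is correct and follows the same route as the paper: the paper's own proof simply states that since the black spheres of a null congruence are null-spheres, the circles at adjacent black vertices of $\Zb^2$ touch ``as a direct result,'' which is exactly the geometric fact you verify in detail (apexes of the two light cones lie on the shared isotropic line $\ccf(f)$, the intersection with $\eucl$ is a circle of radius $|\ccm(b)_3|$ about the projected apex, and the null separation of the apexes forces internal or external tangency). Your lemma that the apex of a null-sphere lies on any isotropic line it contains, via the fact that orthogonal null vectors in $\lor$ are parallel, is sound and supplies precisely the detail the paper leaves implicit.
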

Therefore every null congruence defines an incircular net. Conversely, an incircular net defines a null congruence $\cc$ up to the choice of the orientation of a sphere $\cc(v_0)$ and the third coordinate of $\ccm(v_0)$ for some $v_0 \in \Z^2$. The incircles of an incircular net are actually the projections of the smallest circles of the null congruence, see Section~\ref{sec:laguerrecp} for details.

Let us turn to a special case of null congruences.

\begin{definition}\label{def:isothermiccongruence}
	An \emph{isothermic congruence} is a null congruence $\cc$ such that $\ccmw$ is a spacelike conjugate net.
\end{definition}

To each isothermic congruence $\cc$ one may associate an incircular net $\cpb$ via Theorem~\ref{th:nullcctopacking}. Consider a black vertex $b\in \Zb^2$ and the four adjacent white vertices $w_1,w_2,w_3,w_4 \in \Zw^2$. Two consecutive incircles $\lcp(w_i), \lcp(w_{i+1})$ share a common tangent that contains $\cpb(b)$. In Section~\ref{sec:isothermiccongruences} we show that the four other tangents also intersect in a point $\miq{\cpb(b)}$, and this is a characterizing property of incircular nets that correspond to isothermic congruences. We call these incircular nets \emph{isothermic incircular nets}. The fact that the other tangents intersect at each black vertex of an isothermic incircular net means that isothermic incircular nets actually come in pairs, see Figure~\ref{fig:isothermiccongruence}. In fact, the two so related isothermic incircular nets are related by \emph{Miquel dynamics}, see Section~\ref{sec:miquel}.

Bobenko, Hoffmann and Springborn \cite{bhsminimal} introduced the notion of S-isothermic nets in $\R^3$. We now translate their definition to the Lorentz setup, and then we proceed to show how S-isothermic nets relate to isothermic congruences.

\begin{definition}\label{def:sisothermic}
	A \emph{(spacelike Lorentz) S-isothermic net} $h$ is a collection of maps 
	\begin{align}
		\isow&: \Zw^2 \rightarrow \osp_-(\lor), \\
		\isob&: \Zb^2 \rightarrow \ci_+(\lor), \\
		\isof&: F(\Z^2) \rightarrow \lor,
	\end{align}
  such that for all faces $f = (w,b,w',b') \in F(\Z^2)$ the two spheres $\isow(w)$, $\isow(w')$ are in oriented contact and the two circles $\isob(b)$, $\isob(b')$ intersect the two spheres orthogonally in the point $\isof(f)$.
	The \emph{center net} of an S-isothermic net $\iso$ is the map $\isom: \Z^2 \rightarrow \lor$, such that $\isom(v)$ is the center of $\iso(v)$ for all $v\in \Z^2$.
\end{definition}

\begin{figure}
	\centering
	\includegraphics[width=.45\textwidth]{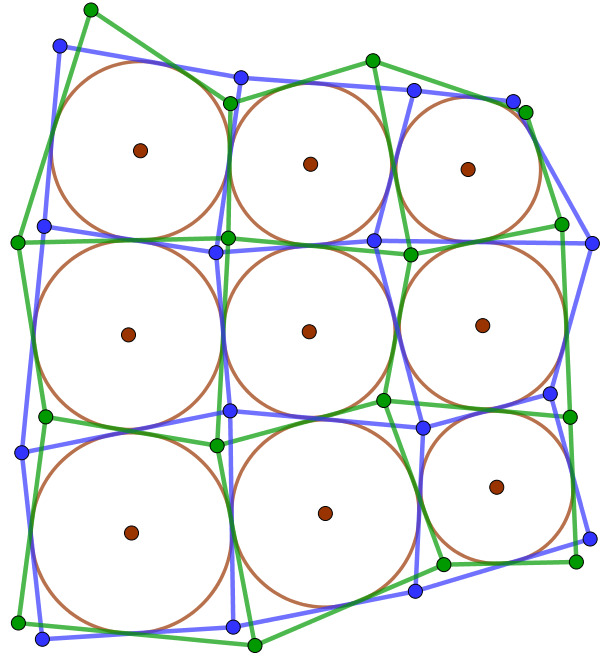}
	\includegraphics[width=.54\textwidth,trim=0 -2cm 0  0,clip]{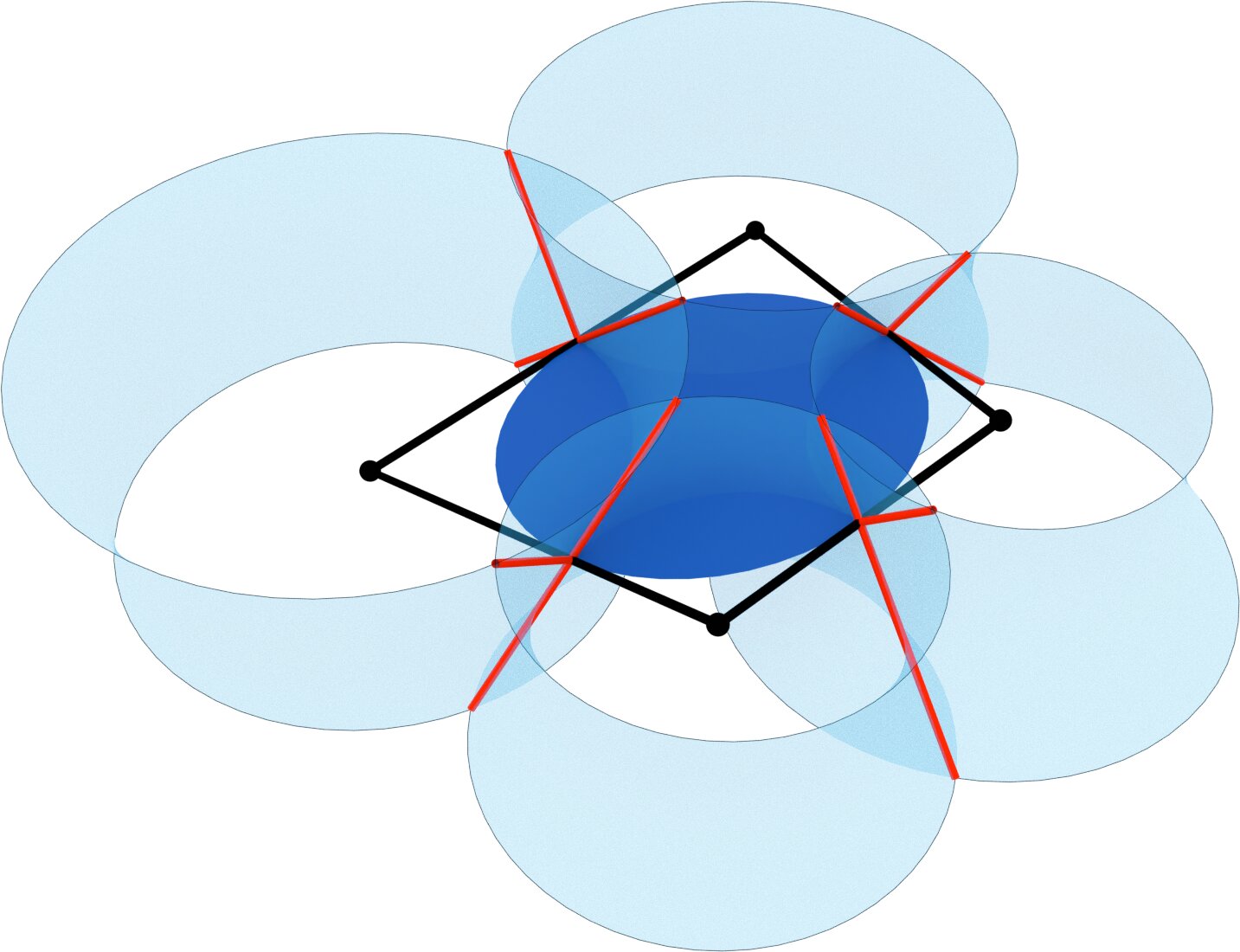}
	\caption{Left: an isothermic incircular net $\cpb$ (blue) is always accompanied by a second incircular net $\miq\cpb$ (green) with the same incircles (brown). Right: an isothermic congruence, the four sphere centers are in a plane, as a result there is a circle orthogonal to the four spheres.}
	\label{fig:isothermiccongruence}
\end{figure}

The following theorem provides the link between incircular nets as used in statistical mechanics and isothermic nets as used in discrete differential geometry.

\begin{theorem}\label{th:isocongruencetoisonet}
	Every isothermic congruence $\cc$ defines a unique S-isothermic net $\iso$ such that $\ccw = \isow$, and vice versa every S-isothermic net defines two isothermic congruences such that $\isow = \ccw$.
\end{theorem}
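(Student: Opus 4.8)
The plan is to prove both directions of the bijection locally, since every condition in Definition~\ref{def:sisothermic} and Definition~\ref{def:isothermiccongruence} is imposed face-by-face and vertex-by-vertex. Throughout I keep the white data fixed by setting $\isow := \ccw$, so that the entire content of the theorem is the mutual reconstruction of the black circles $\isob$ together with the face points $\isof$ on the one hand, and the black null-spheres $\ccb$ together with the isotropic lines $\ccf$ on the other. Because the white spheres are shared, it suffices to understand, for a single black vertex $b$, how the circle $\isob(b)$ and the surrounding null-sphere data determine one another, and then check the per-face incidences.

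\textbf{Forward direction} ($\cc \mapsto \iso$). Given an isothermic congruence, I first define the face points $\isof(f)$ as the common contact points of the two white spheres $\ccw(w),\ccw(w')$ around each face $f=(w,b,w',b')$; these are available because the four spheres around a face are in pairwise oriented contact. The heart of the argument is the construction of $\isob(b)$. By Definition~\ref{def:isothermiccongruence} the net $\ccmw$ is a spacelike conjugate net, so for each black vertex $b$ the centers of the four surrounding white spheres lie in a common spacelike plane $P_b$, namely the plane of the corresponding face of $\Zw^2$. I would show that this coplanarity is exactly what is needed for the four white spheres to admit a common orthogonal circle. The cleanest route is to pass to the representatives of the white spheres in the ambient Lie/Möbius quadric, where coplanarity of the centers translates into a linear dependency among these four representatives; the orthogonal complement of their span is then two-dimensional and hence represents a \emph{unique} circle $\isob(b)$ orthogonal to all four spheres. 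One then checks that $\isob(b)$ is spacelike, so that $\isob(b)\in\ci_+(\lor)$, inheriting its causal type from $P_b$, and that it passes through the four face points; the defining orthogonality and incidence relations of Definition~\ref{def:sisothermic} then hold by construction, and uniqueness is immediate since every choice above was forced.

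\textbf{Backward direction} ($\iso \mapsto$ two congruences). Setting $\ccw := \isow$ fixes the white spheres, and the oriented contact of adjacent white spheres together with the orthogonality of $\isob$ determines the isotropic lines $\ccf$ up to the data at a single face, as for a general contact congruence. To recover the black null-spheres I reconstruct their apices $\ccmb(b)$ from the null-sphere oriented-contact condition, i.e. that each apex lie on all four surrounding white spheres; the orthogonality of $\isob(b)$ to these spheres guarantees that a consistent such apex exists. The \emph{two} congruences arise from a genuine binary choice in completing the black data, which I claim corresponds precisely to the Miquel-dual pair of incircular nets $\cpb$ and $\miq{\cpb}$ associated via Theorem~\ref{th:nullcctopacking} and discussed in Section~\ref{sec:isothermiccongruences} and Section~\ref{sec:miquel}; these two nets share their incircles (hence the same white-sphere data) but are distinct. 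Finally, that each resulting $\cc$ is an isothermic congruence, i.e. that $\ccmw$ is a spacelike conjugate net, is automatic: coplanarity of the white centers is equivalent to the existence of the orthogonal black circle we started from.

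\textbf{Main obstacle.} The delicate point is the Lorentzian signature bookkeeping in the forward direction: establishing that coplanarity of the white centers is equivalent (not merely sufficient) to the existence of the common orthogonal circle, and that this circle is genuinely spacelike. In the backward direction the subtlety is to pin down the correct two-fold choice and to verify that each completion satisfies all contact-congruence axioms. I expect the most economical treatment of both issues to be purely linear-algebraic inside the Lie quadric, where tangency, oriented contact, and orthogonality all become inner-product conditions, so that the coplanarity--orthogonal-circle equivalence and the apex reconstruction reduce to dimension counts of orthogonal complements.
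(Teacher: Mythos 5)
Your overall architecture matches the paper's: fix the white spheres, build the black circles from the coplanarity of the white centers, and recover the black null-spheres as the two intersection points of the four white spheres around each black vertex, with the two-fold ambiguity accounting for the Miquel pair. However, there are two genuine gaps, one in each direction.

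In the forward direction, the asserted step that ``coplanarity of the centers translates into a linear dependency among these four representatives'' in the M\"obius/Lie quadric is false for a general quadruple of spheres. The lift of a sphere with center $c$ and squared radius $r^2$ depends on $|c|^2 - r^2$ in its last coordinates, so a linear dependency requires, beyond an affine relation $\sum_i\lambda_i c_i=0$, $\sum_i\lambda_i=0$ among the centers, the additional scalar condition $\sum_i \lambda_i(|c_i|^2-r_i^2)=0$; already for four null-spheres this is the condition that the four coplanar centers be \emph{concyclic}, not merely coplanar. The implication you need does hold, but only because the four white spheres around a black vertex are \emph{cyclically tangent}, and your sketch never uses the tangency at this point. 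The paper's route is to restrict to the (spacelike) plane $P_b$ of the centers, where the spheres cut four cyclically tangent great circles, and to invoke the touching coins lemma of \cite{bhsminimal} to produce the circle through the four contact points; orthogonality and the linear dependency are then consequences, not the starting point. Since you yourself flag this equivalence as the main obstacle, the proposal identifies the right statement but does not supply a working mechanism for it.

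In the backward direction, the phrase ``a genuine binary choice in completing the black data'' hides the actual work. A priori there is one binary choice \emph{per black vertex} (the polar line of $\isob(b)$ meets $\mob$ in two points, i.e.\ two candidate apices on the axis of $\isob(b)$), so one must explain why only two of the $2^{|\Zb^2|}$ combinations yield contact congruences. The coupling comes from the faces: at a face $f=(w,b,w',b')$ the line $\ccf(f)$ must be one of the two isotropic lines through $\isof(f)$ in the common tangent plane of $\ccw(w)$ and $\ccw(w')$, and each such line selects one apex on the axis of $\isob(b)$ \emph{and} one on the axis of $\isob(b')$ simultaneously; one must then verify that propagating this choice around a white vertex closes up (the paper's argument that the four resulting isotropic lines are of alternating type in $\ccw(w)$). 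Without this consistency check the backward construction is incomplete. The remaining points of your backward direction (that $\ccmw=\isomw$ is automatically a spacelike conjugate net, since a sphere orthogonal to $\isob(b)$ at a point has its center on the tangent line of $\isob(b)$ there, hence in the spacelike plane of the circle) are fine.
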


Essentially, the circles $\isob$ are obtained from the spheres $\ccb$ by intersecting each null-sphere $\ccb(b)$ with the spacelike plane spanned by the adjacent centers of $\ccw$, see Section~\ref{sec:isothermicnets} for details.

As it turns out, Theorem~\ref{th:isocongruencetoisonet} allows us to apply the full machinery developed in \cite{bhsminimal}, to isothermic congruences and therefore also to isothermic incircular nets.

\begin{remark}
  Note that the ``s'' in s-embedding and in S-isothermic were not originally related, although there are some connections now.
\end{remark}

As shown in \cite{bhsminimal}, S-isothermic nets come in pairs via a construction called the \emph{Christoffel dual}. Let us denote by 
\begin{align}
  \isoc = \isom \cup \isof: \Z^2 \simeq (\Z^2 \cup F(\Z^2)) \rightarrow \lor, \label{eq:combinediso}
\end{align}

the combination of center net $\isom$ and contact points $\isof$ of an S-isothermic net $\iso$, defined on $\Z^2 \cup F(\Z^2)$, which we identify with $\Z^2$ for the time being. In this notation, we define the discrete differential
\begin{align}
	\d \isoc(v,v') = \isoc(v') - \isoc(v),
\end{align}
for all adjacent $v,v' \in \Z^2$. We also define the dual differential
\begin{align}
	\d \isoc^*(v,v') = \pm \frac{\d \isoc(v',v)}{|\d \isoc(v',v)|^2},
\end{align}

for all adjacent $v,v'\in \Z^2$, where the sign is $+$ if the edge $(v,v')$ is horizontal or $-$ if the edge is vertical in $\Z^2$. This dual form is \emph{closed}, which means it may be integrated to obtain a map $\isoc^*: \Z^2 \rightarrow \lor$. In fact, it is not hard to see that $\isoc^*$ also defines a unqiue S-isothermic net $\iso^*$, which is called the \emph{Christoffel dual} of $\iso$. Due to the identification in Theorem~\ref{th:isocongruencetoisonet}, this means we also obtain a definition of the Christoffel dual $\cc^*$ of an isothermic congruence $\cc$.

\begin{remark}
	One may generalize the combinatorics of null congruences and incircular nets as follows. Consider a quad-graph $Q$. Now construct a graph $G$ such that
	\begin{align}
	B(G) &= V(Q), & W(G) &= F(Q), & E(G) = \{ (v,f) \in V(Q) \times F(Q)  \ | \ v \mbox{ adjacent to } f \}.
	\end{align}
  We may replace $\Z^2$ by $G$, in this case the incircular net $\cpb$ has the combinatorics of the quad-graph $Q$. In some references (e.g., \cite{klrr,chelkaksgraphs}), the combinatorics are chosen slightly differently, by also adding to the edges the set
	\begin{align}
	\{ (v,v') \in V(Q) \times V(Q) \ | \ v \mbox{ adjacent to } v' \},
	\end{align}
	which also doubles the number of faces. Our statements about null congruences hold for these combinatorics in the same way, except that the orientation of every second sphere in $\ccw$ is reversed. However, our statements about isothermic congruences only apply if the dual graph $Q^*$ is bipartite, since otherwise the sign in the Christoffel dual cannot be consistently alternating.
\end{remark}


\section{Lorentz sphere geometry}
\label{sec:lorentzgeometry}

\subsection{Lorentz geometry}
\label{sec:basiclorentzgeometry}

\begin{figure}
	\centering
	\begin{minipage}{.27\textwidth}
		\includegraphics[width=\textwidth]{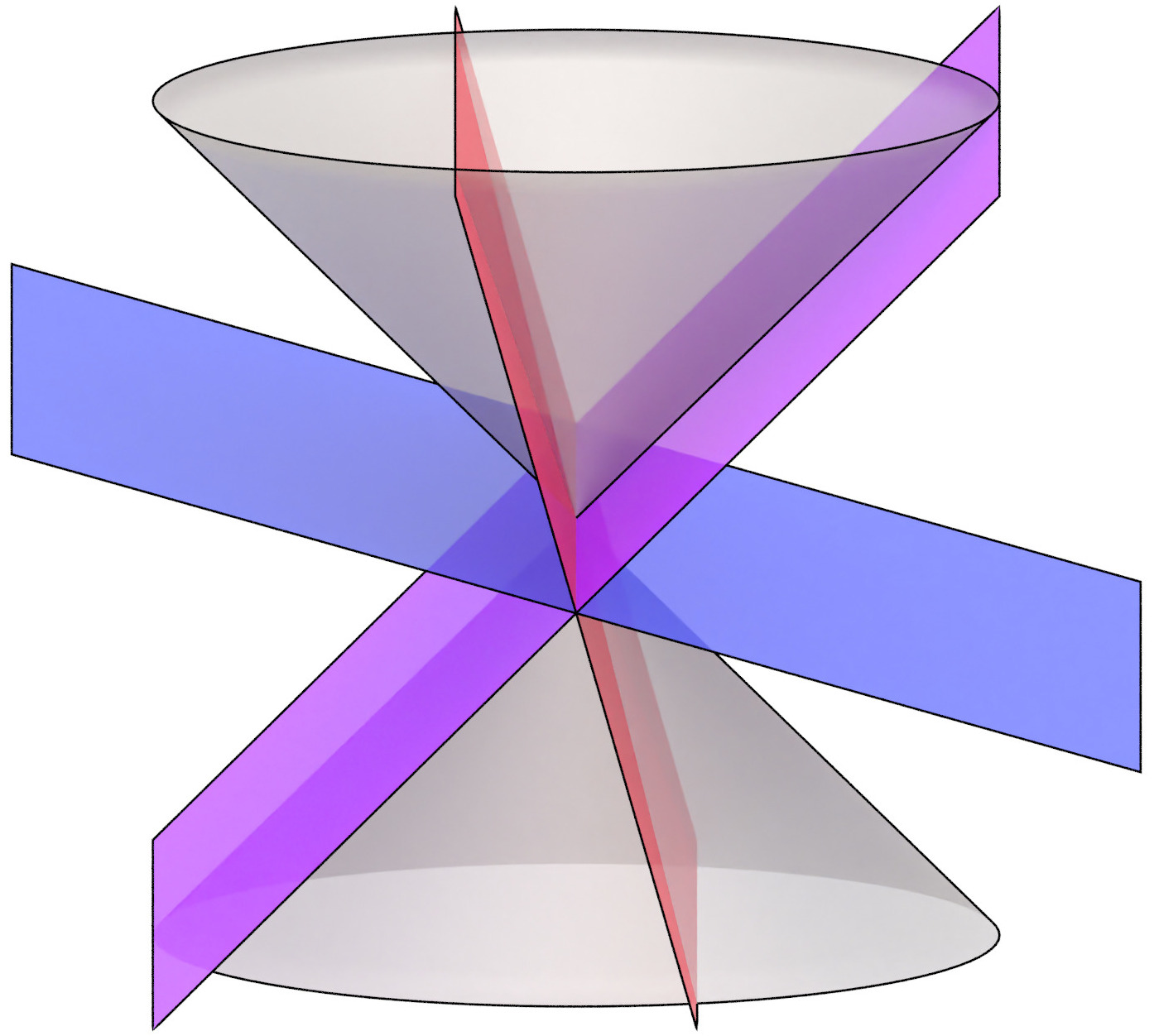}
	\end{minipage}
	\hfill
	\begin{minipage}{.27\textwidth}
		\includegraphics[width=\textwidth]{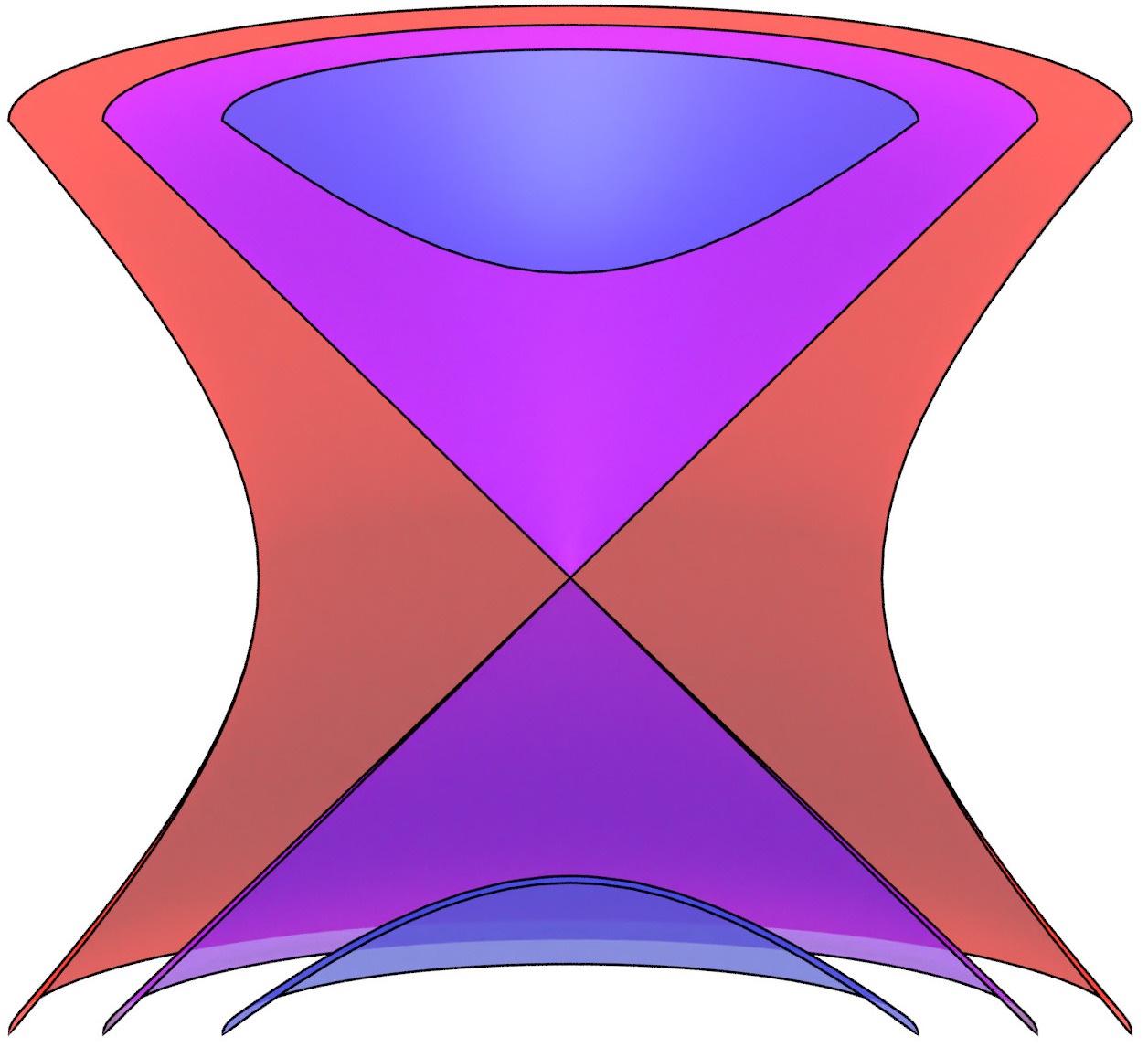}
	\end{minipage}
	\hfill
	\begin{minipage}{.35\textwidth}
		\includegraphics[width=\textwidth]{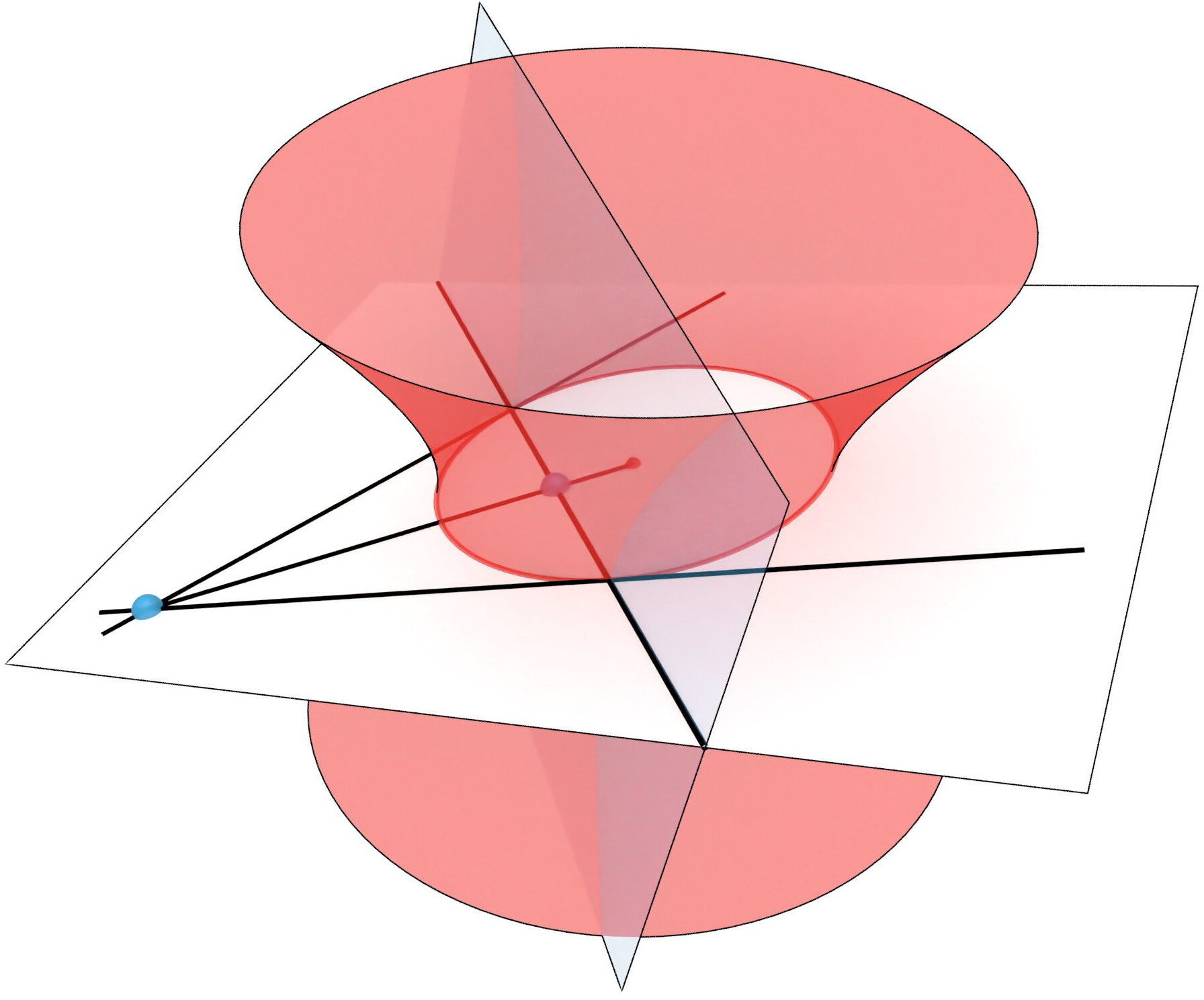}
	\end{minipage}
	\caption{Left: a timelike plane (red), an isotropic plane (violet) and a spacelike plane (blue).  Center: a timelike sphere (red), a null-sphere (violet) and a spacelike sphere (blue). Right: inverting a point about a sphere.}
	\label{fig:lorentzbasics}
\end{figure}

\emph{Lorentz space} $\lor = \R^{2,1}$ is equipped with the non-degenerate bilinear form
\begin{align}
	\label{eq:lor_bilinear_form}
	\langle x, y \rangle  = x_1y_1 + x_2y_2 - x_3y_3,
\end{align} 
which has signature $\texttt{(++-)}$.

For a point $x \in \lor$, the evaluation $\sca{x,x}$ may be positive, zero, or negative. Hence, two points $x, y \in \lor$ may have positive, zero or negative squared distance $\langle x - y, x - y\rangle$. We denote the \emph{space of lines} by $\li(\lor)$ and \emph{spacelike}, \emph{lightlike (isotropic)} and \emph{timelike lines} as $\li_+(\lor), \li_0(\lor)$ and $ \li_-(\lor)$, respectively. The restriction of the bilinear form~\eqref{eq:lor_bilinear_form} to the three types of lines has signature $\texttt{(+)}$, $\texttt{(0)}$ and $\texttt{(-)}$, respectively. In a slight abuse of terminology we will say a spacelike line \emph{has} signature $\si{+}$, an isotropic line \emph{has} signature $\si{0}$ and a timelike line \emph{has} signature $\si{-}$. Similarly, \emph{spacelike}, \emph{isotropic}, and \emph{timelike planes}, $\pl_+(\lor)$, $\pl_0(\lor)$, $\pl_-(\lor)$, have induced signatures $\texttt{(++)}$, $\texttt{(+0)}$ and $\texttt{(+-)}$, see Figure~\ref{fig:lorentzbasics} (left). They can also be distinguished by the signature of their normal vector, in particular they have timelike, isotropic, and spacelike normal vectors respectively. Moreover to each isotropic line there exists a unique isotropic plane containing the line. Isotropic lines and isotropic planes intersect $\eucl$ at a (Euclidean) 45 degree angle.

A sphere $S$ in Lorentz space with center $c$ and squared radius $r^2 \in \R$ is given by the point set
\begin{equation*}
	S := \{x \in \lor \mid \lorsca{x-c, x-c} = r^2\}.
\end{equation*} 
We distinguish \emph{spacelike}, \emph{null-} and \emph{timelike spheres}, $\sp_+(\lor)$, $\sp_0(\lor)$, $\sp_-(\lor)$, corresponding to the cases $r^2 < 0$, $ r^2 = 0$ and $r^2 > 0$. Geometrically they correspond to two-sheeted hyperboloids, cones and one-sheeted hyperboloids, see Figure~\ref{fig:lorentzbasics} (center). Two timelike spheres are said to touch if they have a point $P$ in common and share the tangent plane in that point, see Figure~\ref{fig:touchingspheres}. Since the tangent plane is timelike, the two spheres share two isotropic lines that contain $P$ and are contained in the tangent plane. Furthermore, the line containing the two sphere centers contains $P$ and is orthogonal to the tangent plane. 

Non-empty planar sections of spheres are \emph{(Lorentz) circles} denoted by $\ci(\lor)$. Equivalently, circles are non-empty intersections of two spheres.
Spacelike circles are circles contained in spacelike planes. We denote spacelike Lorentz circles by $\ci_+(\lor)$. All tangent lines to a spacelike Lorentz circle are spacelike. There are also other type of circles, but we do not need them.

\subsection{Lorentz Möbius geometry}
\label{sec:conformallorentzgeometry}

Let $\RP^4$ be the real four-dimensional projective space and let 
\[
\mobsca{x, y}
= x_1y_1 + x_2y_2 + x_3y_3 - x_4y_4 - x_5y_5,
\]
denote a non-degenerate bilinear form on the space of homogeneous coordinates $\R^5$.
We call the quadric 
$$\mob = \{ [x] \in \RP^4 \mid \mobsca{x, x} = 0\},$$
of signature $\texttt{(+++--)}$ the \emph{Lorentz M\"obius quadric} of $\lor$. 
Via stereographic projection, points of $\lor$ can be identified with points on $\mob$. The inverse stereographic projection maps spheres to hyperplanar sections of $\mob$. Therefore, spheres in $\lor$ can be identified with points in $\RP^4$ via polar points of the corresponding hyperplanes. Spacelike spheres correspond to points with $\mobsca{x, x} < 0$, null-spheres to points with $\mobsca{x, x} = 0$ and timelike spheres to points with $\mobsca{x, x} > 0$. Note that each null-sphere is identified with the point on $\mob$ that is its center and apex.

\begin{remark}
  We used the term ``Lorentz Möbius quadric'' since the Lorentz Möbius quadric is the analogon of the Möbius quadric of classical (Euclidean) Möbius geometry. We just replace Euclidean space $\mathbf{E^3} \simeq \R^3$ with Lorentz space $\lor \simeq \R^{2,1}$. Another possible name would be ``conformal Lorentz geometry'', since the Lorentz Möbius transformations preserve the conformal structure of $\lor$, that is intersection angles. Moreover, the Lorentz Möbius quadric is also known as the \emph{Einstein universe} see for example \cite{emngeodesics}), so one could also use the name ``Einstein geometry'' for Lorentz Möbius geometry. Lorentz Möbius geometry is also sometimes called ``pseudo-conformal geometry'', see for example \cite{agpseudoconf}.
\end{remark}

Since we do not use Euclidean Möbius geometry in this article, we simply use the term ``Möbius geometry'' for Lorentz Möbius geometry throughout the paper.

Two spheres $S_1$ and $S_2$ in $\lor$ touch each other if and only if the line spanned by the two corresponding points in $\RP^4$ is tangential to $\mob$. The point of tangency stereographically projects to the point of contact of $S_1$ and $S_2$.
Two spheres $S_1$ and $S_2$ intersect orthogonally if the point in $\RP^4$ corresponding to $S_1$ is in the polar complement of the point corresponding to $S_2$ with respect to the Möbius quadric, and vice versa.

Let us consider a spacelike Lorentz circle $C$. There exists a one-parameter family of spheres containing $C$. The sphere centers lie on the axis of the circle. Lifting the one-parameter family of spheres to the corresponding points in $\RP^4$ yields a line $\ell$ of signature \texttt{(+-)}. This family contains timelike and spacelike spheres and precisely two null-spheres $S_1$, $S_2$, which correspond to the two intersection points of $\ell$ with $\mob$. The polar complement $\ell^\perp$ is a plane of signature \texttt{(++-)}, which intersects $\mob$ in the (inverse) stereographic projection of $C$. Any sphere $S$ represented by a point in $\ell^\perp$ intersects $C$ orthogonally, and contains both $\centersOf{S_1}$ and $\centersOf{S_2}$. 

The group of transformations of (Lorentz) Möbius geometry is $\mathrm{PO}(3,2)$, the group of projective transformations preserving the Lorentz Möbius quadric. In $\lor$, these transformations are generated by the isometries of $\lor$, scalings, and inversions in spheres. The inversion of a point $x\in \lor$ in a sphere $S \subset \lor$ is defined as the intersection of the line $x\vee \centersOf{S}$ with the polar plane of $x$ with respect to $S$, see Figure \ref{fig:lorentzbasics} (right).

\subsection{Timelike Lorentz Laguerre geometry}
\label{sec:laguerre}

\begin{figure}
	\centering
	\begin{overpic}[width=.45\textwidth]{figures/lorentztouchingspheres}
		\put(-5, 67){$S_1$}
		\put(93, 70){$S_2$}
	\end{overpic}
	\hspace{1.5cm}
	\begin{overpic}[width=.35\textwidth]{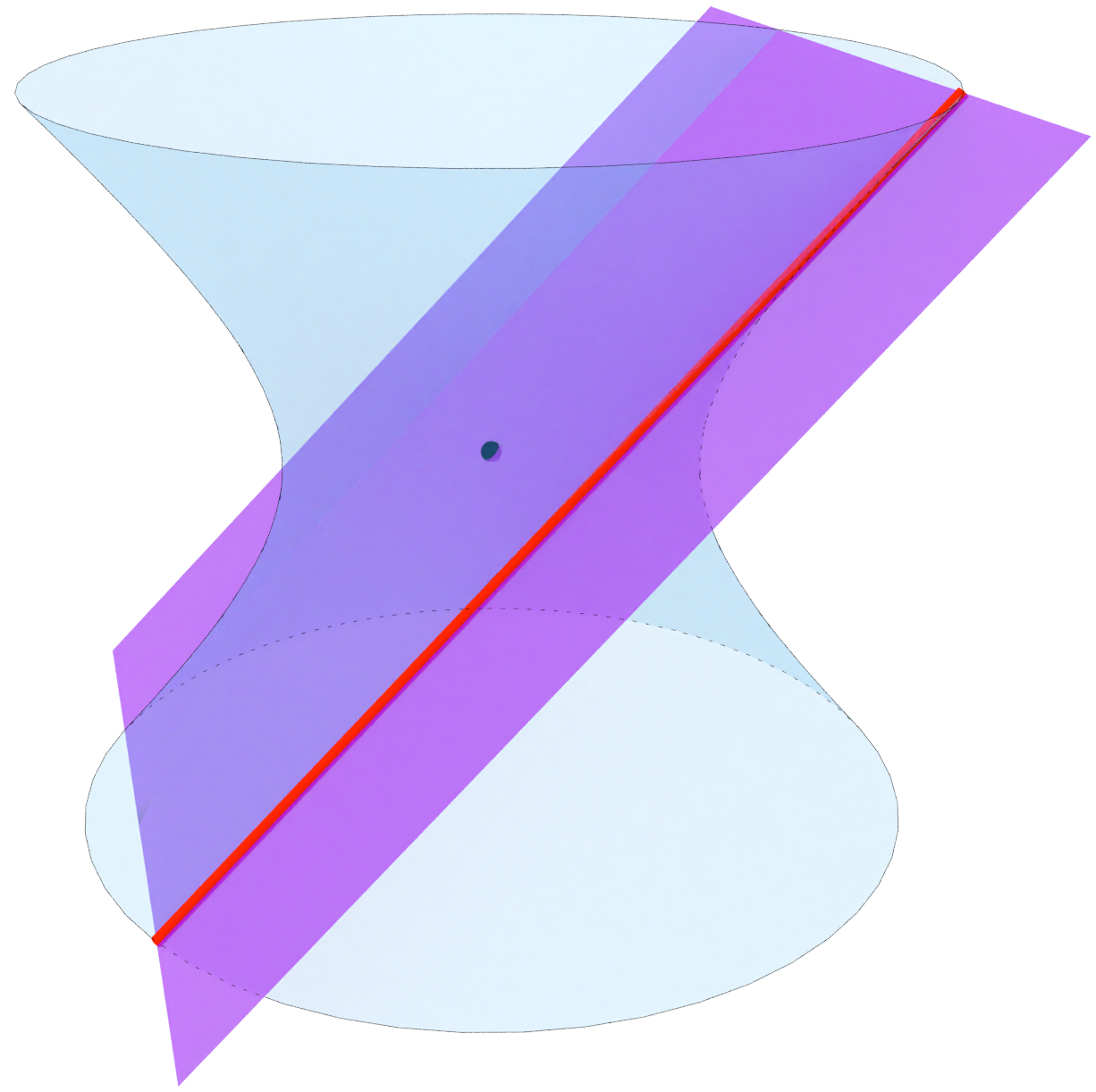}
		\put(-5.5,89.5){$\leftarrow$}
		\put(77,80){ \rotatebox[origin=c]{-33}{$\rightarrow$}}
		
		\put(99,83){$H_L$}
		\put(73,70){$L$}
		\put(84,20){$S$}
	\end{overpic}
	\caption{Left: two touching spheres, their common generators (red) span the tangent plane. Right: an oriented isotropic line (red) and the unique corresponding isotropic plane (violet), as well as an oriented sphere in contact with the oriented isotropic line.}
	\label{fig:touchingspheres}
\end{figure}

The objects of Laguerre geometry \cite{laguerrelaguerre} are oriented planes and spheres, and the incidence relations are given by oriented contact. Lorentz Laguerre geometry is a bit more involved compared to Euclidean Laguerre geometry, since there are planes and spheres of different signature. We only use the Laguerre geometry of timelike and isotropic planes and spheres in $\lor$ in this article. Hence, we only use the term ``Laguerre geometry'' when referring to timelike Lorentz Laguerre geometry. 

\subsubsection{Oriented planes, spheres and isotropic lines}

Consider a spacelike plane $H \in \pl_+(\lor)$. There are two normal vectors $N_\pm \in \lor$, which are orthogonal to $P - P'$ for all $P,P' \in H$ and such that the squared length of $N_\pm$ is $-1$. The two normal vectors are related by $N_+ = - N_-$. Assigning to $H$ one of the two normal vectors corresponds to assigning $H$ an \emph{orientation}. We denote the space of \emph{oriented spacelike planes} by $\opl_+(\lor)$. Analogously, we orient timelike planes except in this case the squared length of the normal vector is $+1$, the space of \emph{oriented timelike planes} is denoted by $\opl_-(\lor)$. Isotropic planes do not come with an orientation in Laguerre geometry.

Spacelike and timelike spheres are assigned an orientation (in the sense of an oriented surface), which induces an orientation on all tangent planes. We denote the sets of oriented spheres by $\osp_+$ and $\osp_-$, respectively. As such, we may think of spheres as being oriented ``inwards'' or ``outwards''. Null-spheres are not assigned an orientation.

An oriented plane $H$ is in \emph{contact} with an oriented sphere $S$ if $H$ is a tangent plane of $S$ with the same orientation. An oriented plane $H$ is in contact with a null-sphere $S$ if the center $\centersOf{S}$ is contained in $H$. An isotropic plane $H$ is in contact with an oriented or null-sphere $S$ if $\centersOf{S}$ is contained in $H$. If two objects are in contact we also say they are \emph{touching}.

Two spheres $S,S'$ are in contact if they share a point $P$ and the same oriented tangent plane at $P$. A null-sphere $S$ is in contact with a sphere $S'$ if $\centersOf{S} \in S'$. 

As discussed in Section~\ref{sec:basiclorentzgeometry} every isotropic line $L$ is contained in a unique isotropic plane $H_L$. The line $L$ divides $H_L$ into two regions, and we give $L$ an orientation by choosing one of the two regions, see also Figure~\ref{fig:touchingspheres}. We denote the set of oriented isotropic lines by $\oli_0(\lor)$.

If a timelike sphere $S$ contains an isotropic line $L$, then $\centersOf{S} \in H_L \setminus L$. The intersection $C = H_L \cap S$ consists of two parallel isotropic lines: $L$ and a second line $L'$, and $L'$ is the reflection of $L$ about $\centersOf{S}$. Moreover, $C$ is a circle in $H_L$ in the sense that every point on $C$ has the same distance to $\centersOf{S}$. If $S$ is an oriented timelike sphere, then there is a natural induced orientation of $C$ and thus of $L$ and $L'$ as well, either towards $\centersOf{S}$ or away from $\centersOf{S}$. If $L$ is an oriented isotropic line, then $S$ is in contact with $L$ if the orientation of $L$ by choosing a region in $H_L$ as above coincides with the induced orientation of $L$ as a subset of $C \subset S$.

Note that the set of all oriented timelike spheres and null-spheres that are in contact with an oriented isotropic line $L$ is a two-dimensional family. Each such sphere $S$ is uniquely defined by the oriented isotropic line $L$ and its center $\centersOf{S} \in H_L$.

\subsubsection{Cyclographic model}

The \emph{(timelike Lorentz) cyclographic model} $\cyc = \R^{2,2}$ is a practical tool in Laguerre geometry, since it models the space of oriented timelike spheres and null-spheres as a vector space. The Laguerre transformations correspond to the isometries of $\cyc$, which are composed of translations and linear transformations which leave the bilinear form
\begin{align}
	\cycsca{x,y} = x_1 y_1 + x_2 y_2 - x_3 y_3 - x_4 y_4
\end{align}
invariant.
We view $\lor$ as a subset of $\cyc$ via
\begin{align}
	\lor = \{ P \in \cyc \mid P_4 = 0 \}.
\end{align}
For $\cyclift{S} \in \cyc$ let the corresponding oriented sphere $S$ in $\lor$ be given by
\begin{align}
	S  = \xi_{\osp}(\cyclift{S}) = \{ Q \in \lor \mid |Q-\cyclift{S}|^2_{2,2} = 0 \}. \label{eq:cyclographicxi}
\end{align}
The orientation of $S$ is understood to be outwards if $\cyclift{S}_4 > 0$ and inwards if $\cyclift{S}_4 < 0$, and if $\cyclift{S}_4 = 0$ then $S$ is a null-sphere. Conversely, we call $\cyclift{S}$ the \emph{cyclographic lift} of $S$.

Let $S,T$ be two oriented spheres such that there is an oriented tangent plane to both $S$ and $T$ which touches $S$ in a point $J$ and $T$ in a point $K$. The \emph{tangential distance} of $S,T$ is the distance of $J,K$ and equals $|\cyclift{S} - \cyclift{T}|_{2,2}$. If $\cyclift{S},\cyclift{T}$ are the two corresponding points in $\cyc$, then the spheres $S, T$ are in oriented contact if and only if $|\cyclift{S} - \cyclift{T}|_{2,2} = 0$.

Each (non-oriented) timelike plane $E$ (signature $\si{+-}$) in $\lor$ corresponds to exactly two isotropic hyperplanes $H_\pm(E)$ in $\cyc$ of signature $\si{+-0}$, such that
\begin{align}
	H_\pm(E) \cap \lor = E.
\end{align}
The two choices correspond to the two orientations of $E$. The cyclographic lift $\cyclift{E}$ of an oriented timelike plane $E$ is the unique hyperplane (of the two $H_\pm(E)$) for which all points $\cyclift{S} \in \cyclift{E}$ correspond to oriented spheres $S$ in oriented contact with $E$.

Moreover, each isotropic plane $E$ in $\lor$ (signature $\si{+0}$) corresponds to exactly one isotropic hyperplane $H(E)$ in $\cyc$ of signature $\si{+-0}$, such that
\begin{align}
  H(E) \cap \lor = E.
\end{align}
The cyclographic lift $\cyclift{E}$ of an isotropic plane $E$ is the unique hyperplane for which all points $\cyclift{S} \in \cyclift{E}$ correspond to spheres $S$ in contact with $E$.

Analogously, each isotropic line $L$ (signature $\si{0}$) corresponds to exactly two fully isotropic planes $E_\pm(L)$ in $\cyc $ of signature $\si{00}$, such that
\begin{align}
	E_\pm(L) \cap \lor = L.
\end{align}
The two choices correspond to the two orientations of $L$. The cyclographic lift $\cyclift{L}$ of $L$ is the unique fully isotropic plane of the two $E_\pm(L)$ such that for all points $\cyclift{S} \in \cyclift{L}$ holds that $S$ is in oriented contact with $L$.

\subsubsection{Timelike Lorentz Laguerre transformations}

The (timelike Lorentz) Laguerre transformations correspond to the isometries of $\cyc$, that is they are the elements of $\mathrm{O}(2,2)$ plus translations. Laguerre transformations preserve the contact of timelike spheres (including null-spheres) with each other. They also preserve the contact of timelike spheres (including null-spheres) with timelike planes (including isotropic planes). However, the image of a null-sphere may be a timelike sphere and vice versa, analogously the image of an isotropic plane may be a timelike plane and vice versa, because the intersection of a signature $\si{+-0}$ plane with $\lor$ may either have signature $\si{+-}$ or $\si{+0}$. Note that scalings of $\cyc$ (Laguerre similarities) also preserve contact but are not Laguerre transformations in the stricter sense. 

\begin{remark}
	It is also possible to represent oriented timelike and isotropic planes as points on a degenerate quadric $\mathcal B$ (the timelike Lorentz Blaschke cylinder) of signature $\si{++--0}$ in $\RP^4$. In that case the Laguerre transformations (including scalings) correspond to projective transformations of $\RP^4$ that preserve $\mathcal B$.
\end{remark}

\subsection{Timelike Lorentz Lie geometry}
\label{sec:lie}

Since we only treat the case of timelike Lorentz Lie geometry and no other Lie geometry, we will generally just write Lie geometry when we mean timelike Lorentz Lie geometry.

The \emph{Lie quadric} is a quadric in $\RP^5$ of signature $\si{+++---}$. In Lie geometry, oriented timelike planes, isotropic planes, oriented timelike spheres, null-spheres and points are treated equivalently.  Each such object is identified with a point on the Lie quadric.

An object $A$ is in contact with an object $B$ if the point corresponding to $A$ is in the polar complement of $B$ with respect to the Lie quadric (and vice versa). Lie transformations are the transformations of $\RP^5$ that preserve the Lie quadric. The Möbius and Laguerre geometries are subgeometries of Lie geometry. Möbius transformations additionally preserve a fixed point inside the Lie quadric (of signature $\si{-})$, while Laguerre transformations additionally preserve a fixed point on the Lie quadric.



\section{Contact congruences and circle patterns} \label{sec:cccp}

\subsection{Conical nets and circle patterns} \label{sec:conicalandcp}

Given a (non-degenerate) map $f: \Z^2 \to \eucl$ and an edge $e=(v,v')\in E(\Z^2)$, we denote the line containing $f(v)$ and $f(v')$ by $\ell(e)$. Let us denote the reflection about the line $\ell(e)$ by $\rho(e)$, we call $\rho({e})$ the \emph{reflection about the edge $e$}.
Generally, the composition of four reflections about four lines intersecting in a point in $\R^2$ is a rotation around that intersection point. 

\begin{lemma} \label{lem:conicalreflectionid}
  A map $\cpm:\Z^2 \to \eucl$ is a conical net if and only if for every $v\in \Z^2$
	\begin{align}
		\prod_{e \ni v} \rho(e) = \mathrm{id}
	\end{align}	
 	is satisfied.
\end{lemma}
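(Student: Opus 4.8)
The plan is to unwind the definition of a conical net. By Definition~\ref{def:circlepattern}, saying that $\cpm$ is a conical net means exactly that there exists a circle pattern $\cp$ — circles $\cp(v)$ centered at $\cpm(v)$ together with face points $\cpf(f)$ with $\cpf(f)\in\cp(v)$ whenever $v,f$ are incident — whose centers are $\cpm$. So I must show that such face points and circles exist if and only if the reflection identity holds at every vertex. The geometric heart of the argument is one local observation: for an edge $e=(v,v')$ bounded by the two faces $f,f'$, both points $\cpf(f)$ and $\cpf(f')$ lie on both circles $\cp(v)$ and $\cp(v')$, so they are the two intersection points of these circles. Since the two intersection points of two circles are symmetric about the line joining the centers — which is precisely $\ell(e)=\cpm(v)\vee\cpm(v')$ — we get $\cpf(f')=\rho(e)\,\cpf(f)$ (this remains valid, with $\cpf(f)=\cpf(f')\in\ell(e)$, when the circles are tangent). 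Thus crossing an edge acts on face points by the reflection $\rho(e)$.

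For the forward direction I would fix a vertex $v$ and follow the four incident faces $f_0,f_1,f_2,f_3$ around it, crossing the incident edges $e_1,\dots,e_4$ in cyclic order, with $e_i$ separating $f_{i-1}$ from $f_i$. The local observation gives $\cpf(f_i)=\rho(e_i)\,\cpf(f_{i-1})$, so closing the loop yields $\bigl(\prod_{e\ni v}\rho(e)\bigr)\cpf(f_0)=\cpf(f_0)$. As all four lines $\ell(e)$ pass through $\cpm(v)$, the product is a rotation about $\cpm(v)$; a nontrivial planar rotation fixes only its center, and by non-degeneracy $\cpf(f_0)\neq\cpm(v)$, so the rotation must be the identity.

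For the converse I would build the circle pattern from the reflections. Choosing a seed face $f_0$ and a generic point $\cpf(f_0)$, I propagate the definition across edges by $\cpf(f'):=\rho(e)\,\cpf(f)$. This prescription is consistent precisely because the vertex identities say that the monodromy around each single vertex star is trivial, and in the simply connected $\Z^2$ the stars of vertices generate the whole cycle space of the face-adjacency (dual) graph; hence the value of $\cpf(f)$ is path-independent and well defined. Finally, since every $\rho(e)$ with $e\ni v$ is a reflection in a line through $\cpm(v)$, it preserves distances to $\cpm(v)$, so all face points incident to $v$ are equidistant from $\cpm(v)$ and lie on a common circle $\cp(v)$; these circles together with $\cpf$ form a circle pattern with center net $\cpm$.

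The step I expect to be the main obstacle is the consistency (path-independence) of the propagation in the converse: one must argue that the local closing-up conditions around vertices suffice to guarantee a globally well-defined face map. This is a discrete monodromy argument relying on the simple connectivity of $\Z^2$ — the cycle space of the dual graph is generated by the boundaries of vertex stars — together with the care that the seed point avoids the degenerate loci (face points coinciding with circle centers), so that the vertex rotations are genuinely trivialized rather than merely fixing a single point.
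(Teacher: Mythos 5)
Your proof is correct and follows the standard argument: the paper itself gives no proof of Lemma~\ref{lem:conicalreflectionid}, deferring to \cite{bsddgbook}, and your converse direction is essentially the construction the paper carries out immediately afterwards in Section~\ref{sec:conicalandcp} (propagating a seed point $\cpf(f_0)$ by the flat connection $\rho$ and observing that the face points around each vertex become equidistant from $\cpm(v)$). The only caveats --- genericity of the seed point so that face points do not coincide with circle centers, and the monodromy/simple-connectivity argument for path-independence of the propagation --- are exactly the ones you flag, so nothing essential is missing.
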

\begin{proof}
	See for example \cite{bsddgbook}. 
\end{proof}

Consider the dual graph $(\Z^2)^* \simeq F(\Z^2)$ of $\Z^2$, and let us denote by $E^*(\Z^2)$ the set of dual edges and by $e^*$ the dual edge of an edge $e \in E(\Z^2)$. In what follows we will denote by $\rho(e^*)$ the reflection about the original edge $e$, hence $\rho(e^*) = \rho(e)$.

A \emph{discrete connection} $\Gamma$ is a map from the oriented edges of a graph to some (automorphism) group, such that $\Gamma(v,v')\circ \Gamma(v',v) = \id$ for all edges $(v,v')$. A discrete connection is said to be \emph{flat} if it evaluates to the identity along each closed cycle.

\begin{lemma}\label{lem:flatConnection}
	Let $\cpm:\Z^2 \to \eucl$ be a conical net.
	The map $\rho: E^\ast(\Z^2) \to \operatorname{Iso}(\eucl)$ on 
	(oriented) dual edges is a flat discrete connection.
\end{lemma}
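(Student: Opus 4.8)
The plan is to verify the two defining properties of a flat discrete connection in turn: first that $\rho$ is a discrete connection at all, and then that it is flat. Throughout I would identify the vertices of the dual graph $(\Z^2)^\ast$ with the faces $F(\Z^2)$, and recall that by definition $\rho(e^\ast)$ is the reflection about the line $\ell(e)$ through $\cpm(v)$ and $\cpm(v')$.

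For the connection property, I would use that a reflection is an involution and that $\rho(e^\ast)$ does not depend on the orientation of the edge. Concretely, for an oriented dual edge $(f,f')$ with underlying primal edge $e$ we have $\rho(f,f') = \rho(f',f) = \rho(e)$, so that $\rho(f,f')\circ\rho(f',f) = \rho(e)\circ\rho(e) = \id$. This is exactly the requirement $\Gamma(v,v')\circ\Gamma(v',v)=\id$ from the definition of a discrete connection.

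For flatness, the key observation is that the faces of the dual graph $(\Z^2)^\ast$ are in bijection with the vertices of $\Z^2$: the boundary of the dual face corresponding to $v\in\Z^2$ consists precisely of the dual edges $e^\ast$ with $e\ni v$, traversed in the same cyclic order in which the edges $e$ appear around $v$. Along this elementary cycle the holonomy of $\rho$ evaluates to $\prod_{e\ni v}\rho(e)$, which equals $\id$ by Lemma~\ref{lem:conicalreflectionid}, since $\cpm$ is a conical net. To pass from elementary face-cycles to an arbitrary closed cycle, I would invoke that $\Z^2$ is embedded in the simply connected plane, so the cycle space of the dual graph is generated by the boundaries of its faces; any closed cycle is therefore a concatenation of such face-cycles (together with backtracking, which cancels by the involutivity established above), and holonomy is multiplicative under concatenation. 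Hence $\rho$ evaluates to $\id$ along every closed cycle.

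The point to be careful about — more bookkeeping than genuine obstacle — is matching conventions between the lemma and the connection: one must check that the cyclic order of the reflections read off along the boundary of a dual face agrees with the order of the product $\prod_{e\ni v}\rho(e)$ appearing in Lemma~\ref{lem:conicalreflectionid}, and that the orientation reversals of dual edges are absorbed by the fact that each $\rho(e^\ast)$ is an involution. Once these conventions are fixed, flatness is an immediate consequence of Lemma~\ref{lem:conicalreflectionid}.
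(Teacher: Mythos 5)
Your proposal is correct and follows essentially the same route as the paper: involutivity of reflections gives the connection property, and flatness on the elementary dual-face cycles reduces to Lemma~\ref{lem:conicalreflectionid}. The paper's proof is just a terser version of yours; your extra care about the cycle space being generated by face boundaries is a detail the paper leaves implicit.
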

\begin{proof}
  Since reflections are involutions, $\rho$ is a
  discrete connection. Furthermore, $\rho$ is 
	flat if for all $f^* \in F^*(\Z^2)$ holds that
	$$
	\prod_{e^\ast \in f^*} \rho({e^\ast}) = \id,
	$$
	which is obviously equivalent to Lemma~\ref{lem:conicalreflectionid}.
\end{proof}

\begin{remark}
	Conical nets may also be characterized by an angle condition, see \cite{muellerconical}. Moreover, conical nets with some additional embedding constraints are known as t-embeddings, see \cite{clrdimer}.
\end{remark}

Each conical net corresponds to a 
$2$-parameter family of circle patterns \cite{muellerconical}. To see this, we demonstrate how to construct a circle pattern $p$ from the connection $\rho$ of a conical net $\cpm$. Begin by choosing a point $\cpf(f_0) \in \eucl$ assigned to a face $f_0$ containing a vertex $v_0$. For $p$ to be a circle pattern, it is necessary that $\cpf(f) = \rho(f,f')\circ \cpf(f')$ for all adjacent $f,f' \in F(\Z^2)$. This condition is automatically satisfied, because $\rho$ is flat. We may therefore use 
the reflections to determine all of $\cpf$. By construction, for
each vertex $v \in \Z^2$, the points $\cpf(f)$ for faces incident to $v$ lie on a circle. 
Therefore, this construction defines a circle pattern $\cp$ with centers given by $\cpm$. Since we may choose the initial point arbitrarily, there is indeed a 2-parameter family of circle patterns with center net $\cpm$. 


\subsection{Lorentz lift} \label{sec:lorlift}

We now describe the construction of the Lorentz lift $\lorlift\cp$ of a circle pattern $\cp$ to Lorentz space $\lor$, see Section~\ref{sec:lorentzgeometry} for details on Lorentz geometry. Recall that we embedded $\eucl \simeq \R^2$ as the restriction of $\lor \simeq \R^{2,1}$ to
\begin{align}
	\eucl = \{x \in \lor \ | \ x_3 = 0\}.
\end{align}

\begin{figure}
	\centering
		\begin{overpic}[width=.4\textwidth]{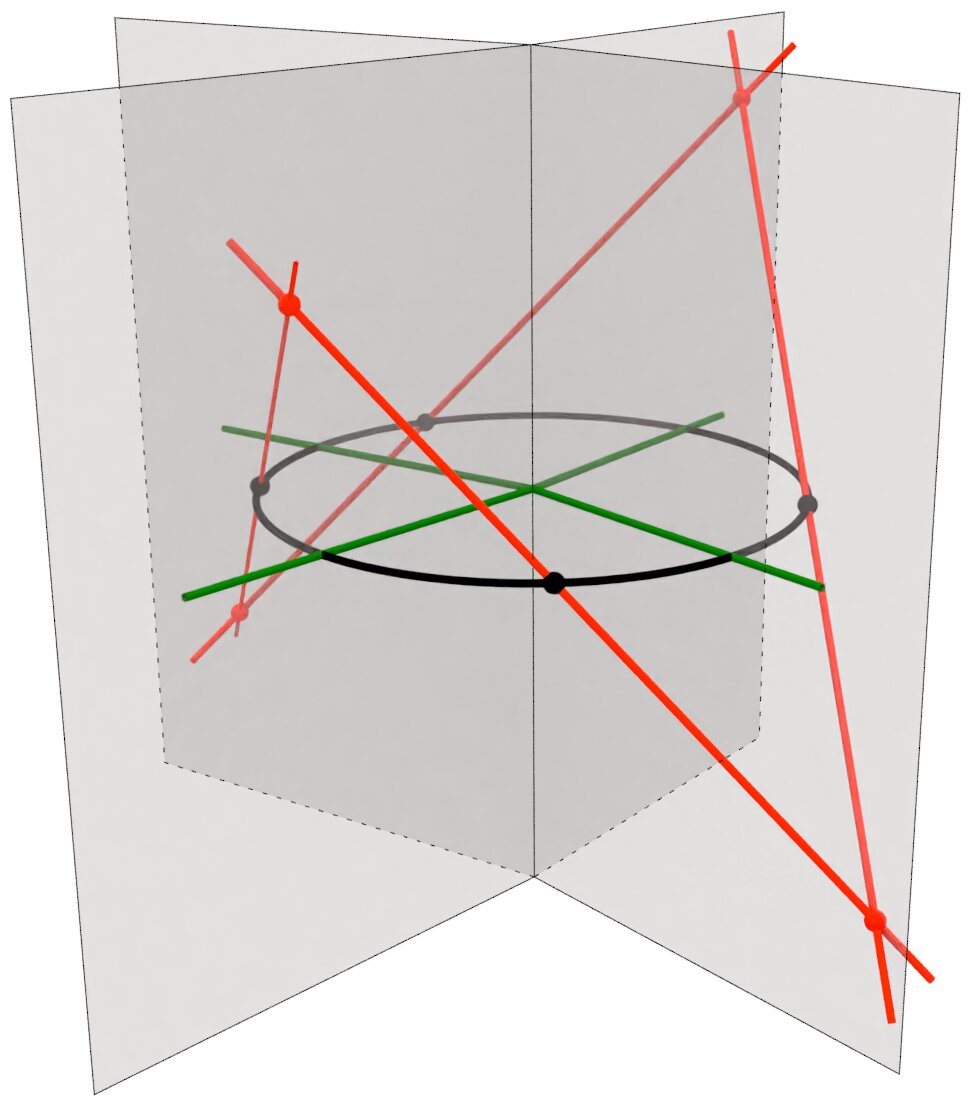}
		\put(59, 53){$\ell$}
		\put(87, 85){$\lorlift\ell$}
		\put(77, 36){$\lorlift\cpf$}
	\end{overpic}
	\hspace{1.5cm}
	\begin{overpic}[width=.4\textwidth]{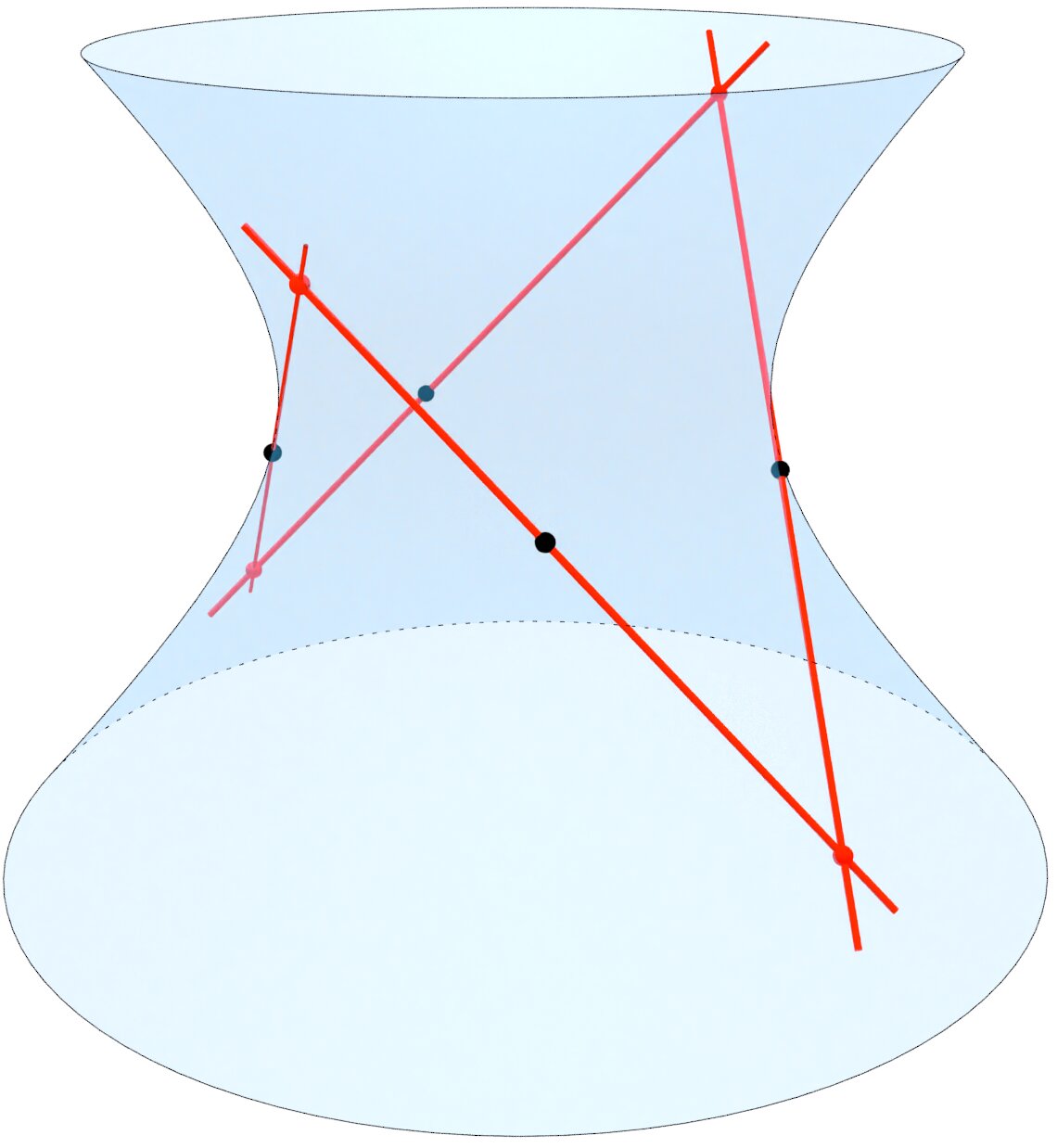}
		\put(85,91){$\lorlift{\cp}(v)$}
	\end{overpic}
	\caption{Left: the vertical planes $\lorlift\ell$ (gray) above the lines $\ell$ (green) of the conical net $\cpm$, and the isotropic lines $\lorlift\cpf$ (red) that are obtained by reflecting about these planes. Right: the unique timelike Lorentz sphere $\lorlift{\cp}(v)$ containing the four isotropic lines.} 
	\label{fig:lorlift}
\end{figure}

For each line $\ell \subset \eucl$ we denote by $\lorlift{\ell}$ the vertical
plane containing $\ell$, that is 
\begin{align}
	\lorlift \ell = \{x \in \lor \ | \ (x_1,x_2) \in \ell\}. \label{eq:vertplanes}
\end{align}
Also recall that for each edge $e = (v,v')\in E(\Z^2)$, we denote by $\ell(e)$ the line containing the two centers $\cpm(v)$ and $\cpm(v')$.
Moreover, we denote the reflection across the plane $\lorlift{\ell}(e)$ by $\lorlift{\rho}(e)$, since the reflection restricts to the reflection $\rho(e)$ about the line $\ell$ in $\eucl$. Since $\rho$ is a flat discrete connection in $\eucl$, it follows immediately that the discrete connection $\lorlift{\rho}$ in $\lor$ is flat as well.

In Section~\ref{sec:conicalandcp} we explained how to construct a circle pattern from a conical net using the reflections $\rho$. In analogous fashion, we now explain how to construct a contact congruence, which we call the \emph{Lorentz lift} $\lorlift{\cp}$, from a circle pattern $p$ and the connection $\rho$.

Fix a face $f_0 \in F(\Z^2)$ and an oriented isotropic line $\lorlift\cpf(f_0) \in \oli_0(\lor)$ (see Section~\ref{sec:laguerre}) through $\cpf(f_0)$. Since $\lorlift{\rho}$ is flat, we may propagate $\lorlift\cpf(f_0)$ by reflections to obtain a well-defined map 
\begin{align}
	\lorlift\cpf: F(\Z^2) \rightarrow \oli_0(\lor),
\end{align}
with the property $\lorlift\cpf (f) =  \lorlift{\rho}(f,f')\circ \cpf(f')$  (see Figure~\ref{fig:lorlift}). Moreover, by construction the intersection $\lorlift\cpf(f) \cap \eucl$ is $\cpf(f)$. Also note that if the face $f$ is adjacent to the face $f'$, then $\lorlift\cpf(f)$ intersects $\lorlift\cpf(f')$ in $\lorlift{\ell}(f,f')$, since $\lorlift{\ell}(f,f')$ is the fixed plane of the reflection $\lorlift\rho(f,f')$.

As a result, around each vertex $v\in \Z^2$ we obtain four oriented isotropic lines that intersect cyclically. There is a unique oriented timelike Lorentz sphere containing these four isotropic lines, and we denote this sphere by $\lorlift\cp(v)$. The only exception is if the four isotropic lines intersect in one common point, in which case they define a unique null-sphere $\lorlift\cp(v)$. In fact, it is not hard to see that $\lorlift\cp(v)$ is also obtained by rotating any one of the isotropic lines around the axis through $\cpm(v)$ orthogonal to $\eucl$. This in turn shows that the circle $\cp(v)$ coincides with the intersection $\lorlift\cp(v) \cap \eucl$. Moreover, the center $\odot {\lorlift\cp(v)}$ is on the aforementioned axis, and therefore the projection of $\odot{\lorlift\cp(v)}$ is $\cpm(v)$.

\begin{theorem}
	Any Lorentz lift $\lorlift{\cp}$ of a circle pattern $\cp$ is a contact congruence (see Definition~\ref{def:contactcongruence}).
\end{theorem}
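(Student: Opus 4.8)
The plan is to verify Definition~\ref{def:contactcongruence} directly for the two maps produced by the construction: the isotropic-line map $\lorlift\cpf: F(\Z^2) \to \oli_0(\lor)$ and the sphere map $\lorlift\cp: \Z^2 \to \osp_-(\lor) \cup \sp_0(\lor)$. Two things must be checked. First, both maps actually land in the claimed spaces (isotropic lines, and oriented timelike or null-spheres). Second, the incidence/contact relation holds: $\lorlift\cp(v)$ is in oriented contact with $\lorlift\cpf(f)$ whenever $v$ and $f$ are incident. Most of the groundwork has been laid before the statement, so the proof is essentially an assembly of those observations into a verification.

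First I would confirm that $\lorlift\cpf$ is well-defined and isotropic-valued. The text already establishes that $\lorlift\rho$ is a flat discrete connection on $\lor$, so propagating the initial oriented isotropic line $\lorlift\cpf(f_0)$ by reflections yields a well-defined map independent of the path; flatness is exactly the consistency needed around each face. Since each reflection $\lorlift\rho(e)$ is across a \emph{vertical} plane $\lorlift\ell(e)$, and vertical planes meet $\eucl$ at a $45$-degree angle (recall isotropic lines and planes intersect $\eucl$ at a $45$-degree Euclidean angle, from Section~\ref{sec:basiclorentzgeometry}), reflection across such a plane preserves the isotropic character of a line. Hence every $\lorlift\cpf(f)$ remains an oriented isotropic line, and $\lorlift\cpf(f) \cap \eucl = \cpf(f)$ by construction.

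Next I would verify the sphere map. Around a fixed vertex $v$, the four faces incident to $v$ carry four oriented isotropic lines that, by the adjacency observation $\lorlift\cpf(f) \cap \lorlift\cpf(f') = \lorlift\ell(f,f')$, intersect cyclically in a quadrilateral of isotropic lines lying above the Euclidean circle $\cp(v)$. The key geometric fact to invoke is that these four isotropic lines lie on a \emph{unique} sphere, obtained by rotating any one of them about the vertical axis through $\cpm(v)$; this axis is orthogonal to $\eucl$, so the resulting surface of revolution of an isotropic generator is a timelike sphere (a one-sheeted hyperboloid, cf.\ Figure~\ref{fig:lorentzbasics}), degenerating to a null-sphere (cone) precisely when the four lines are concurrent. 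This places $\lorlift\cp(v)$ in $\osp_-(\lor) \cup \sp_0(\lor)$ as required, and shows $\lorlift\cp(v) \cap \eucl = \cp(v)$ with center projecting to $\cpm(v)$.

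Finally, the contact relation: each of the four isotropic lines through $v$ is by definition one of the generators of the sphere $\lorlift\cp(v)$ that revolution produced, so $\lorlift\cp(v)$ contains $\lorlift\cpf(f)$ for every incident $f$; it then remains to check this containment is genuinely \emph{oriented} contact in the sense of Section~\ref{sec:laguerre}, i.e.\ that the orientation chosen on $\lorlift\cpf(f)$ (by a region of $H_{\lorlift\cpf(f)}$) matches the orientation induced on that generator as a subset of the oriented sphere. I expect this orientation-matching to be the main subtlety rather than the incidence itself: one must argue that the single orientation choice $\lorlift\cpf(f_0)$, propagated by reflections, stays compatible with a \emph{consistent} inward/outward orientation of every sphere $\lorlift\cp(v)$. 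The cleanest route is to note that reflection across a vertical plane reverses neither the isotropic type nor the relative side-structure needed, so the induced orientations on shared generators of adjacent spheres agree, giving a globally consistent orientation assignment; this is where I would spend the most care, and it is precisely the point that distinguishes a genuine contact congruence from a merely incidence-theoretic one.
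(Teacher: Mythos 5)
Your proposal is correct and follows essentially the same route as the paper: both treat the theorem as an assembly of the facts already established in the construction of the Lorentz lift, the only substantive remaining check being the oriented contact -- the paper phrases this as oriented sphere--sphere contact across each edge (the two spheres share the two oriented isotropic lines of the common faces, hence a common point and oriented tangent plane), whereas you verify the sphere--line contact of Definition~\ref{def:contactcongruence} directly; the two formulations are equivalent. One small slip in your justification: the reason reflection across a vertical plane $\lorlift{\ell}(e)$ preserves oriented isotropic lines is simply that it is a Lorentz isometry (the plane is timelike, with spacelike normal); vertical planes meet $\eucl$ orthogonally, not at $45$ degrees -- it is the isotropic lines and planes themselves that make the $45$-degree angle with $\eucl$.
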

\begin{proof}
	All that remains to prove is that for adjacent $v,v'\in \Z^2$ the spheres $\lorlift{\cp}(v)$ and $\lorlift{\cp}(v')$ are in oriented contact. Let $f,f'\in F(\Z^2)$ be such that $(f,f') = (v,v')^*$. By construction, the two oriented isotropic lines $\lorlift{\cpf}(f)$ and $\lorlift{\cpf}(f')$ are contained in both $\lorlift{\cp}(v)$ and $\lorlift{\cp}(v')$. Therefore the two spheres are indeed in oriented contact, with point of contact $\lorlift{\cpf}(f) \cap \lorlift{\cpf}(f')$ and tangent plane $\lorlift{\cpf}(f) \vee \lorlift{\cpf}(f')$.
\end{proof}

If we are only given the data of a conical net, and not the circle pattern, the initial isotropic line in the construction of the Lorentz lift may be chosen arbitrarily in $\oli_0(\lor)$. Therefore, for each conical net there is a three-dimensional family of corresponding contact congruences.

Now, let us prove the converse direction, that every contact congruence defines a circle pattern.

\begin{proof}[Proof of Theorem \ref{th:cctocp}]
	Let $\cc$ be a contact congruence, then we define a circle pattern $\cp$ via
	\begin{align}
		\cp(v) &= \cc(v) \cap \eucl, & \quad \cpf(f) &= \ccf(f) \cap \eucl.
	\end{align}
  Clearly, the first intersection yields circles and the other yields points as required. Moreover, for every face $f \in F(\Z^2)$ incident to a vertex $v \in \Z^2$ we have $\ccf(f) \subset \cc(v)$, therefore we also have $\cpf(f) \in \cp(v)$. Moreover, if $\cpm(v)\neq\ccm(v)$, then $\cc(v)$ is rotation-symmetrical around the line $\cpm(v) \vee \ccm(v)$, hence, $\cpm(v)$ is indeed the orthogonal projection of $\ccm(v)$.
\end{proof}

Let us also observe an additional interesting property of the center net $\centersOf{\cc}$ of a contact congruence $\cc$.

\begin{lemma}\label{lem:centerNetIsIsoConjugate}
	Every center net $\ccm$ of a contact congruence $\cc$ is an isotropic conjugate net. Every isotropic conjugate net is the center net of a contact congruence.
\end{lemma}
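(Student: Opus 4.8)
The plan is to prove the two directions separately: the forward implication follows almost immediately from the Laguerre-geometric facts recalled in Section~\ref{sec:laguerre}, while the converse requires a reflection-connection construction in the spirit of the Lorentz lift.

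\textbf{Center net $\Rightarrow$ isotropic conjugate net.} Fix a face $f=(v_1,v_2,v_3,v_4)$ and write $L=\ccf(f)$ for the isotropic line at $f$, with $H_L$ its unique isotropic plane. By Definition~\ref{def:contactcongruence} and the note following it, each incident sphere $\cc(v_i)$ contains $L$. If $\cc(v_i)$ is timelike, then as recalled in Section~\ref{sec:laguerre} its center lies in $H_L\setminus L$; if $\cc(v_i)$ is a null-sphere, then $L$ is a generator through its apex, so its center lies on $L\subset H_L$ (an isotropic line contained in a light cone necessarily passes through the apex). In either case $\ccm(v_i)\in H_L$ for all four $i$, so the four centers are coplanar in the isotropic plane $H_L$. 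Hence $\ccm$ is a conjugate net all of whose face planes $\ccm(f)=H_L$ are isotropic, i.e.\ an isotropic conjugate net.

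\textbf{Isotropic conjugate net $\Rightarrow$ center net.} Given an isotropic conjugate net $x:\Z^2\to\lor$, I build a contact congruence with $\ccm=x$ by propagating a single isotropic line. For an edge $e=(v,v')$ with adjacent faces $f,f'$, both centers $x(v),x(v')$ lie in $x(f)\cap x(f')$, so the line $\ell(e)$ through them is this intersection. Each isotropic plane $x(f)$ carries a unique isotropic direction $d_f$ spanning its radical, so every vector of $x(f)$ is orthogonal to $d_f$. A short computation then shows that a sphere centered at $x(v)$ contains an isotropic line $L\subset x(f)$ exactly when, for any $p\in L$, its squared radius equals $\lorsca{p-x(v),p-x(v)}\ge 0$; and that for two isotropic lines $\ccf(f)\subset x(f)$ and $\ccf(f')\subset x(f')$ the associated spheres at $x(v)$ and $x(v')$ are in oriented contact precisely when the two lines meet in a point of $\ell(e)$. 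Since $\ell(e)$ is automatically orthogonal to the tangent plane $\ccf(f)\vee\ccf(f')$ (the edge direction lies in both $x(f)$ and $x(f')$, hence is orthogonal to both $d_f$ and $d_{f'}$), this fixes the propagation rule: $\ccf(f')$ is the isotropic line of $x(f')$ through the point $Q_e:=\ccf(f)\cap\ell(e)$, which is exactly the image of $\ccf(f)$ under the reflection $\tau_e$ of $\lor$ in the bisector plane of $x(f),x(f')$ containing $\ell(e)$.

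The assignment $e^\ast\mapsto\tau_e$ is a discrete connection whose parallel transport carries $\ccf(f)$ to $\ccf(f')$, so fixing one oriented line $\ccf(f_0)\subset x(f_0)$ yields a well-defined map $\ccf:F(\Z^2)\to\oli_0(\lor)$ \emph{provided this connection is flat}. This flatness is the main obstacle and the step where the planarity hypothesis on $x$ is consumed, in exact analogy with the reflection characterization of conical nets. All four bisector planes at a vertex $v$ contain $x(v)$, so the holonomy around $v$ is an isometry fixing $x(v)$; I expect to prove it is the identity by projecting orthogonally to $\eucl$, where $\tau_e$ descends to the reflection $\rho(e)$ about the projected edge line, then invoking Lemma~\ref{lem:conicalreflectionid} and Lemma~\ref{lem:flatConnection} for the projected net (which is the conical net of Theorem~\ref{th:cctocp}) and checking that the remaining third-coordinate degree of freedom also closes up. Once flatness is in hand, I define $\cc(v)$ as the sphere centered at $x(v)$ through the points $Q_e$ on the edges at $v$; it contains all four incident lines $\ccf(f)$ and has squared radius $\lorsca{Q_e-x(v),Q_e-x(v)}\ge 0$, so it is timelike or null. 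Oriented contact of adjacent spheres is then immediate, since they share the line $\ccf(f)$, the timelike tangent plane $\ccf(f)\vee\ccf(f')$, and have their centers aligned along $\ell(e)$ orthogonal to that plane. Thus $(\cc,\ccf)$ is a contact congruence with center net $\ccm=x$, completing the converse.
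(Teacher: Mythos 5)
Your forward direction is correct and is essentially the paper's own argument: every sphere in oriented contact with $\ccf(f)$ has its center in the unique isotropic plane containing $\ccf(f)$, so the four centers around a face are coplanar in that isotropic plane. The converse also follows the paper's strategy (propagate one oriented isotropic line face to face, using that consecutive lines must meet at a point of $\ccm(v)\vee\ccm(v')$), and your observation that consecutive lines share the point $Q_e$, so that the four spheres induced at a vertex have a common radius, is sound.

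The genuine gap is precisely at the step you yourself flag as ``the main obstacle'': flatness of the reflection connection, i.e.\ the fact that after propagating around the four faces at a vertex $v$ the line returns to $\ccf(f_1)$ rather than to the \emph{second} isotropic generator of $\cc(v)$ lying in the isotropic plane $x(f_1)$. Your proposed route is circular as stated: you want to project to $\eucl$ and invoke Lemma~\ref{lem:conicalreflectionid} and Lemma~\ref{lem:flatConnection} ``for the projected net (which is the conical net of Theorem~\ref{th:cctocp})'', but Theorem~\ref{th:cctocp} presupposes that a contact congruence with center net $x$ already exists, which is exactly what you are constructing. To make that route work you would have to prove independently that the orthogonal projection of an isotropic conjugate net satisfies the reflection identity of Lemma~\ref{lem:conicalreflectionid}; this is true (around $v$ the four isotropic face planes are tangent planes of the light cone with apex $x(v)$, so their traces on a horizontal plane are four tangent lines of a circle centered below $x(v)$, and the holonomy is a rotation mapping a tangent line to itself, hence the identity), but it is a genuine argument that you have not supplied, and the further check that ``the remaining third-coordinate degree of freedom also closes up'' is likewise left open. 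The paper closes the gap more directly: the four reflection planes are the vertical planes over the edge lines, all of which contain the vertical axis $A$ through $\ccm(v)$, so the holonomy is a rotation about $A$; the returned line is one of the two isotropic generators of $\cc(v)$ in the isotropic plane of $f_1$, and the second generator lies in the opposite ruling of the rotationally symmetric sphere $\cc(v)$, hence cannot be the image of $\ccf(f_1)$ under a rotation about $A$. Until a flatness argument of one of these kinds is actually carried out, the converse direction is not proved. Two smaller points: you should fix the orientation bookkeeping (the paper requires the orientation of $\ccf(f')$ relative to the centers to agree with that of $\ccf(f)$), and you should specify which of the two bisector planes your $\tau_e$ reflects in --- the vertical one is the choice for which the third coordinate is preserved automatically.
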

\begin{proof}
	Let $\cc$ be a contact congruence. Consider a face $f \in F(\Z^2)$ incident to the four vertices $v_1,v_2,v_3,v_4\in \Z^2$. Since the four spheres $\cc(v_1), \cc(v_2), \cc(v_3), \cc(v_4)$ each contain the isotropic line $\ccf(f)$, the four points  $\ccm(v_1), \ccm(v_2), \ccm(v_3), \ccm(v_4)$ are in an isotropic plane (see Section~\ref{sec:laguerre}). Therefore $\ccm$ is an isotropic conjugate net. 
	
	Now, let us assume $\ccm$ is some isotropic conjugate net, how do we construct a corresponding contact congruence $\cc$? Note that for each face $f$ there is a one-parameter family of isotropic lines contained in the isotropic plane of $\ccm$ corresponding to $f$.
	Choose an initial face $f_0$ and an initial oriented isotropic line $\ccf(f_0)$ in the isotropic plane of $\ccm$ corresponding to $f_0$.  Now we claim that $\cc$ is completely determined.
	
	To prove this, consider two adjacent faces $f,f'$, let $(v,v')^* = (f,f')$ and assume $\ccf(f)$ is already determined. Then $\ccf(f')$ needs to intersect $\ccf(f)$ in the line
	\begin{align}
		\ccm(v) \vee \ccm(v'),
	\end{align}
	and the orientation of $\ccf(f)$ with respect to $\ccm(v)$ (away from or towards) needs to agree with the orientation of $\ccf(f')$. Thus $\ccf(f)$ determines $\ccf(f')$. In this way all of $\ccf$ is determined from $\ccm$ and $\ccf(f_0)$. It remains to check that this is well-defined.
	
	There is a unique oriented timelike sphere with center $\ccm(v)$ containing $\ccf(f)$, and a unique oriented timelike sphere with center $\ccm(v)$ containing $\ccf(f')$. But since $\ccf(f)$ and $\ccf(f')$ intersect, this is the same oriented timelike sphere, which is $\cc(v)$. Hence $\cc(v)$ is well-defined, since it is the same for every vertex $v$. 	
	
	Consider a vertex $v$ and the four adjacent faces $f_1,f_2,f_3,f_4 \in F(\Z^2)$. Assume we began with knowing $\ccf(f_1)$ and iterate the construction above to obtain $\ccf(f_2)$, $\ccf(f_3)$ and $\ccf(f_4)$, and then iterate once more to obtain a line $L$ that should be $\ccf(f_1)$. However, all we know at this point is that $L$ is in the intersection of $\cc(v)$ with the isotropic plane corresponding to $f_1$, and there are two such lines.
	
  To see that $L$ coincides with $\ccf(f_1)$ we need an additional argument. For each pair $f_i$, $f_{i+1}$ consider the plane that is orthogonal to $\eucl$ and that contains the intersection of the two isotropic planes corresponding to $f_i$ and $f_{i+1}$ (these planes correspond to those in Equation~\eqref{eq:vertplanes}). Then $\ccf(f_{i+1})$ is actually the reflection of $f_i$ about this plane. All four of these planes intersect in the line $A$ orthogonal to $\eucl$ and pass through $\ccm(v)$. The composition of four such reflections is a rotation about $A$. As a result, $L$ needs to coincide with $\ccf(f_1)$ since the other option is not a rotation of $\ccf(f_1)$. Thus, $\ccf$ is indeed also well-defined which concludes the proof.
\end{proof}

\section{Cyclographic lift and origami-map}\label{sec:cyclift}

Recall that we defined the cyclographic model $\cyc$ for (timelike Lorentz) Laguerre geometry and the cyclographic lifts of oriented timelike spheres and planes as well as isotropic lines in Section~\ref{sec:laguerre}.

\begin{definition}
  Let $\cc$ be a contact congruence. The \emph{cyclographic lift}
  $\cyclift{\cc}$ is the pair of maps 
  \begin{align}
    \cyclift{\cc}: \Z^2 &\rightarrow \cyc , & v &\mapsto \cyclift{(\cc(v))},\nonumber
    \\
    \cyclift{\ccf}: F(\Z^2) &\rightarrow \pl_0(\cyc), &f &\mapsto \cyclift{(\ccf(f))}. 
    \label{eq:cclift}
  \end{align}
\end{definition}

\begin{remark}
	Note that the projection of $\cyclift\ccf(f)$ to $\lor$ consists of the centers of all the oriented spheres containing $\ccf(f)$, and the projection of $\cyclift\cc(v)$ is the center  $\ccm(v)$. 
\end{remark}

Given a contact congruence $\cc$ Equation~\eqref{eq:cclift} determines $\cyclift{\cc}$ uniquely. We only need to verify that the planes $\cyclift{\ccf}$ are fully isotropic.

\begin{lemma}\label{lem:isotropiccyclift}
	The cyclographic lift $\cyclift{\cc}$ of every contact congruence $\cc$ -- as given by Equation~\eqref{eq:cclift} -- is a conjugate net such that for each $f\in F(\Z^2)$ the plane $\cyclift{\ccf}(f)$ is a fully isotropic plane. Conversely, every conjugate net in $\cyc$ with fully isotropic planes is the cyclographic lift of a contact congruence.
\end{lemma}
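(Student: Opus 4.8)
The proof splits into the two asserted directions, and both rest on the single correspondence established in Section~\ref{sec:laguerre}: for an oriented isotropic line $L$ and an oriented timelike or null-sphere $S$, one has $\cyclift{S}\in\cyclift{L}$ if and only if $S$ is in oriented contact with $L$, where $\cyclift{L}\in\pl_0(\cyc)$ is the fully isotropic plane that is the cyclographic lift of $L$. Recall also that $\xi_{\osp}$ is a bijection between $\cyc$ and $\osp_-(\lor)\cup\sp_0(\lor)$, so that $\cyclift{\cc}(v)=\xi_{\osp}^{-1}(\cc(v))$ is well defined for every contact congruence $\cc$.

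For the forward direction, fix a face $f\in F(\Z^2)$ with incident vertices $v_1,v_2,v_3,v_4$. By Definition~\ref{def:contactcongruence} each sphere $\cc(v_i)$ is in oriented contact with $\ccf(f)$, so the equivalence above gives $\cyclift{\cc}(v_i)\in\cyclift{\ccf}(f)$ for $i=1,\dots,4$. Thus the four lifted vertices all lie in the single two-dimensional fully isotropic plane $\cyclift{\ccf}(f)$, which shows at once that $\cyclift{\cc}$ is a conjugate net (Definition~\ref{def:conjugatenet}) and that its face plane is the fully isotropic plane $\cyclift{\ccf}(f)$. (Generically the four points affinely span $\cyclift{\ccf}(f)$, so the conjugate-net plane is unambiguously this plane; dropping the fourth coordinate and using the remark that $\cyclift{\cc}(v)$ projects to $\ccm(v)$ recovers Lemma~\ref{lem:centerNetIsIsoConjugate} as a consequence.)

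For the converse, let $y:\Z^2\to\cyc$ be a conjugate net whose face plane $y(f)$ is fully isotropic for every $f$. I would set $\cc(v)=\xi_{\osp}(y(v))$ and recover the isotropic lines as follows. The key observation is that $\lor\cong\R^{2,1}$ contains no two-dimensional isotropic subspace, so the direction of the fully isotropic plane $y(f)$ is never contained in the direction of the hyperplane $\{P_4=0\}=\lor$. Hence $y(f)$ is transverse to $\lor$ and meets it in exactly one line $L_f$; since vectors with vanishing fourth coordinate have the same length in $\cyc$ as in $\lor$, the direction of $L_f$ is isotropic, so $L_f$ is an isotropic line of $\lor$. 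Moreover $y(f)$ is one of the two fully isotropic planes $E_\pm(L_f)$ through $L_f$, and this choice encodes an orientation of $L_f$; I define $\ccf(f)$ to be the resulting oriented isotropic line, so that $\cyclift{\ccf}(f)=y(f)$ by construction. Finally, for incident $v,f$ the conjugate-net property gives $y(v)\in y(f)=\cyclift{\ccf}(f)$, whence the equivalence shows $\cc(v)$ is in oriented contact with $\ccf(f)$; thus $\cc$ is a contact congruence with $\cyclift{\cc}=y$.

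The main work lies in the converse, specifically in justifying the bijection between fully isotropic planes of $\cyc$ and oriented isotropic lines of $\lor$. The crucial point to verify carefully is that there are exactly two fully isotropic planes containing a given isotropic line $L$, namely $E_+(L)$ and $E_-(L)$: an isotropic vector $d\in\cyc$ is contained in exactly two two-dimensional isotropic subspaces, because the form induced on $d^\perp/\R d$ is a non-degenerate split form of signature $\si{+-}$ and hence has precisely two isotropic directions, and the transversality argument above rules out either subspace lying inside $\lor$. Once this is in place, the orientation bookkeeping matching $y(f)$ to the correct element of $E_\pm(L_f)$ is routine, and the remaining verifications that $\cc$ is a genuine contact congruence follow immediately from the contact equivalence.
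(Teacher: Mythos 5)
Your proof is correct and follows essentially the same route as the paper: both directions reduce to the equivalence ``$\cyclift{S}\in\cyclift{L}$ iff $S$ is in oriented contact with $L$'' from Section~\ref{sec:laguerre}, with the converse recovering the oriented isotropic line as $y(f)\cap\lor$. The paper simply cites the facts about fully isotropic planes and their intersections with $\lor$ from its preliminaries, whereas you verify them (correctly) in place.
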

\begin{proof}
	Let $c$ be a contact congruence. Since the four spheres $\cc(v_1)$, $\cc(v_2)$, $\cc(v_3)$, $\cc(v_4)$ around a face $f$ contain $\ccf(f)$, the cyclographic lifts of the four spheres lie in the fully isotropic plane $\cyclift{(\ccf(f))} = \cyclift{\ccf}(f)$ (as discussed in Section~\ref{sec:laguerre}). 
	
  Conversely, let $\cyclift{\cc}$ be a fully isotropic conjugate net. At each face $f$ the adjacent four vertices $v_1$,$v_2$,$v_3$,$v_4$ are mapped to the spheres $\xi_{\vec{S}}(\cyclift{\cc}(v_i))$ (as in Equation~\ref{eq:cyclographicxi}), which are in oriented contact with the oriented isotropic line $\cyclift{\ccf(f)}\cap \lor$. 
\end{proof}

If a contact congruence $c$ is the Lorentz lift $\lorlift{\cp}$ of some circle pattern $\cp$ or of a conical net $\cpm$, we also consider a cyclographic lift $\cyclift{c}$ to be a cyclographic lift $\cyclift{\cp}$ of $\cp$ or a cyclographic lift $\cyclift{\cpm}$ of $\cpm$.

In the remainder of this section, we compare the cyclographic lift to a construction introduced in \cite{clrdimer}. First, we choose an integral $\mathscr R$ of the connection $\rho$ (see Section~\ref{sec:conicalandcp}), that is a map
\begin{align}
  \mathscr R&: F(\Z^2) \rightarrow  \mathrm{Iso}(\eucl), & \mathscr R(f') &= \rho(f,f') \circ \mathscr R(f) \quad \text{for all}\ (f,f')^* \in E(\Z^2).
\end{align}
Clearly, such a map $\mathscr R$ is unique up to composition with an element of $\mathrm{Iso}(\eucl)$. The ambiguity is usually eliminated by requiring that $\mathscr R$ is the identity for some chosen base face $f_0$. Note that, up to the just mentioned ambiguity, $\mathscr R$ is well-defined due to the flatness of $\rho$, see Lemma~\ref{lem:flatConnection}.

\begin{definition} \label{def:origamimap}
	Let $\cpm$ be a conical net and let $\mathscr R$ be an integral of $\rho$. The \emph{origami map} $o$ is the map
	\begin{align}
		o&: \Z^2 \rightarrow \eucl, & o(v) &\mapsto \mathscr R(f_v)( \cpm(v)),
	\end{align}
	where $f_v$ is any face adjacent to $v$.
\end{definition}
If $f_1,f_2$ are two faces incident to $v$, then $\cpm(v)$ is a fixed point of $\mathscr R(f_1)\mathscr R^{-1}(f_2)$. Hence the choice of $f_v$ in Definition~\ref{def:origamimap} has no influence on $o$.

Given a conical net $\cpm$, the map
\begin{align}
	h: \Z^2 \rightarrow \R^{2,2} \simeq \cyc, \quad v \mapsto (\cpm(v), o(v)),
\end{align}
was defined in \cite{clrdimer}.

\begin{theorem}
	The map $h$ is a cyclographic lift $\cyclift{\cpm}$ of $\cpm$.
\end{theorem}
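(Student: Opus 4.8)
The plan is to verify that $h$ is a fully isotropic conjugate net in $\cyc$, and then to invoke the converse direction of Lemma~\ref{lem:isotropiccyclift} together with the projection characterisation in Theorem~\ref{th:cctocp}. So first I would fix a face $f = (v_1,v_2,v_3,v_4) \in F(\Z^2)$. Since $f$ is incident to all four of its vertices, the well-definedness of the origami map noted after Definition~\ref{def:origamimap} allows me to use the single isometry $\mathscr R(f)$ for all four vertices, so that $o(v_i) = \mathscr R(f)(\cpm(v_i))$ and hence $h(v_i) = (\cpm(v_i), \mathscr R(f)(\cpm(v_i)))$ for $i = 1,\dots,4$. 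Writing the cyclographic form with signature $+$ on the first two and $-$ on the last two coordinates, for any two vertices of $f$ I compute
\begin{align}
  \cycsca{h(v_i) - h(v_j),\, h(v_i) - h(v_j)} = |\cpm(v_i) - \cpm(v_j)|^2 - |\mathscr R(f)(\cpm(v_i)) - \mathscr R(f)(\cpm(v_j))|^2 = 0,
\end{align}
because $\mathscr R(f)$ is a Euclidean isometry and therefore preserves distances. Thus every difference vector among $h(v_1),\dots,h(v_4)$ is isotropic.

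Next I would upgrade ``all differences isotropic'' to ``coplanar in a fully isotropic plane''. By polarisation, if $u = h(v_2) - h(v_1)$ and $w = h(v_3) - h(v_1)$ are isotropic and $u - w = h(v_2)-h(v_3)$ is isotropic as well, then $\cycsca{u,w} = 0$; hence the linear span of the difference vectors is totally isotropic. Since the maximal totally isotropic subspaces of $\cyc = \R^{2,2}$ have dimension $2$, these difference vectors span a subspace of dimension at most $2$, so the four points $h(v_i)$ are coplanar and their plane is fully isotropic. Therefore $h$ is a conjugate net all of whose face planes are fully isotropic, and by the converse part of Lemma~\ref{lem:isotropiccyclift} we get $h = \cyclift{\cc}$ for a (unique) contact congruence $\cc$.

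It then remains to identify $\cc$ as a Lorentz lift of $\cpm$. The center net $\ccm$ is the projection of $h$ onto $\lor = \{P_4 = 0\}$ obtained by dropping the last coordinate, so $\ccm(v)$ has first two coordinates $\cpm(v)$ and third coordinate the first component of $o(v)$; its orthogonal projection to $\eucl = \{x_3 = 0\}$ (dropping the third coordinate) is exactly $\cpm(v)$. By Theorem~\ref{th:cctocp} this says that $\cpm$ is the conical net associated with $\cc$, i.e. $\cc$ is a Lorentz lift $\lorlift{\cp}$ of $\cpm$, and consequently $h = \cyclift{\cc}$ is a cyclographic lift $\cyclift{\cpm}$ of $\cpm$, as claimed.

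I expect the main obstacle to be bookkeeping the coordinate conventions rather than any hard geometry: one must confirm that the identification $\cyc \simeq \R^{2,2}$ places $\cpm$ in the spacelike $(++)$ block and $o$ in the $(--)$ block, since it is precisely this placement that converts the isometry property of $\mathscr R(f)$ into the vanishing of $\cycsca{\cdot,\cdot}$ on face-difference vectors, and that makes the orthogonal projection of $\ccm$ recover $\cpm$. Once this is pinned down, the isotropy computation and the planarity argument are both short.
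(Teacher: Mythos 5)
Your proof is correct, and its skeleton coincides with the paper's: establish that $h$ is a conjugate net with fully isotropic face planes, invoke the converse direction of Lemma~\ref{lem:isotropiccyclift} to obtain a contact congruence $\cc$ with $\cyclift{\cc}=h$, and then identify the underlying conical net. The genuine difference is in the first step: the paper simply cites \cite{clrdimer} for the fact that $h$ is a fully isotropic conjugate net, whereas you prove it from scratch. Your argument — using that a face $f$ is adjacent to all four of its vertices so that $o(v_i)=\mathscr R(f)(\cpm(v_i))$ with a \emph{single} isometry $\mathscr R(f)$, deducing $\cycsca{h(v_i)-h(v_j),\,h(v_i)-h(v_j)}=0$ from distance preservation, and then upgrading to total isotropy by polarisation and to coplanarity via the bound on totally isotropic subspaces of $\R^{2,2}$ — is clean and makes the theorem self-contained, which is a real gain. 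Your identification of the base net also differs slightly in flavour: you project the center net $\ccm$ to $\eucl$ and appeal to Theorem~\ref{th:cctocp}, while the paper compares $h$ with $\cyclift{\cp}$ in the first two coordinates; these amount to the same bookkeeping. Two small points worth flagging: your coplanarity conclusion tacitly assumes the difference vectors span a genuinely two-dimensional space (the non-degenerate case, which the paper also assumes throughout), and the placement of $\cpm$ in the $(++)$ block and $o$ in the $(--)$ block of $\R^{2,2}$ — which you correctly identify as the crux of the sign computation — is indeed the convention the paper uses, as confirmed by the $\C^{1,1}$ identification in the proof of Lemma~\ref{lem:xcyclo}.
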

\begin{proof}
  Chelkak, Laslier and Russkikh have shown in \cite{clrdimer} that $h$ is a fully isotropic conjugate net. Therefore -- due to Lemma~\ref{lem:isotropiccyclift} -- $h$ is a cyclographic lift of some conical net $\cpm'$. Moreover, if $\cp$ is the corresponding circle pattern of a conical net $\cpm$, then $h$ agrees with $\cyclift{p}$ in the first two coordinates, that is, in $\eucl$. As a result, $\cpm' = \cpm$ and $h$ is indeed a cyclographic lift $\cyclift{\cpm}$ of $\cpm$.
\end{proof}

Consequently, one may view the Lorentz lift of a conical net as a geometric interpretation of the definition of $h$ given by \cite{clrdimer}, by viewing $\R^{2,2}$ as the cyclographic model $\cyc$ of $\lor$. 

Note that the origami map is only defined up to a Euclidean isometry. These three degrees of freedom in the construction of the origami map correspond -- in a not so obvious way -- with the choices we made in the construction of $\cyclift{p}$: a two parameter choice of an initial point for $\cpf$ in $\eucl$, and a one parameter choice of an isotropic line through that point in $\lor$.

\section{Cycle patterns} \label{sec:laguerrecp}

Cycle patterns are an analogue of circle patterns where we replace circles in $\eucl$ with oriented circles in $\eucl$ and intersection points with oriented tangents. The word \emph{cycle} was originally used to refer to oriented circles \cite{laguerrelaguerre}. Special cases of cycle patterns appear in the literature \cite{gsfastrhombisch,abconfocal,bstincircular, fairleythesis}. In our case, we show that incircular nets and circle packings correspond to a special case of cycle patterns, which will also be useful later when characterizing isothermic incircular nets.

\begin{figure}
	\centering
	\includegraphics[width=0.49\textwidth]{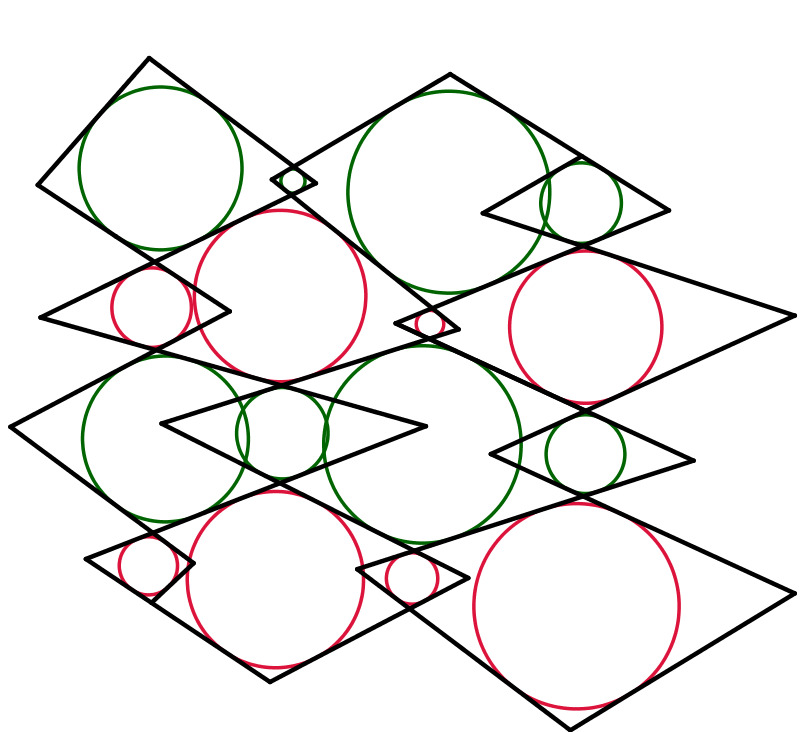}
	\includegraphics[width=0.49\textwidth]{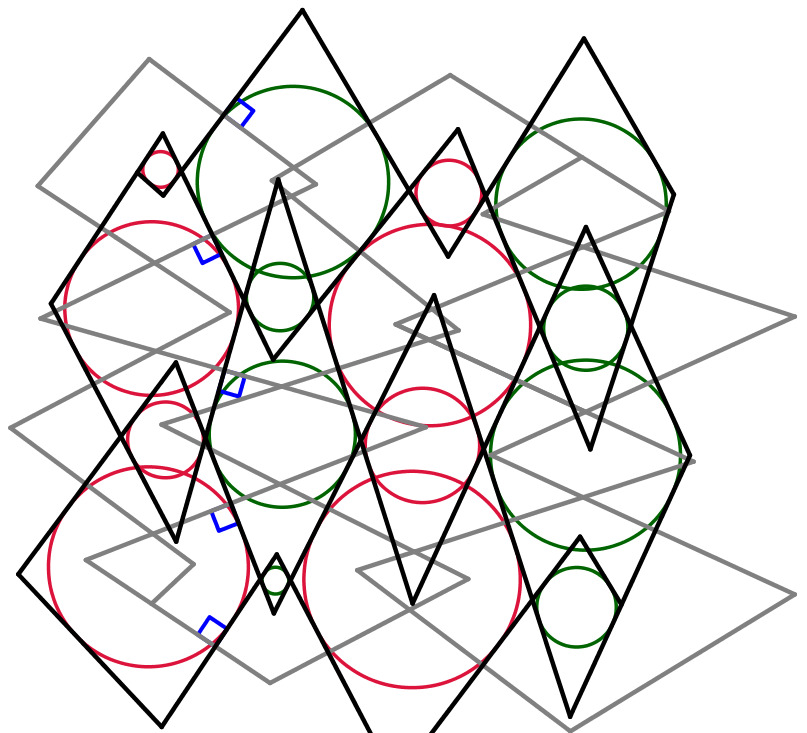}
	\caption{Left: a cycle pattern $\lcp$, green circles are oriented outwards, red circles inwards. Right: a cycle pattern $\xi$ that is concentric with and orthogonal to $\lcp$. For comparison $\lcp$ is drawn in gray and some right angles are highlighted in blue.}
	\label{fig:cyclepattern}
\end{figure}

\begin{definition}\label{def:laguerrecp}
  A \emph{cycle pattern} is a pair of maps 
  \begin{align}
    \lcp&: \Z^2 \rightarrow \osp(\eucl), & 
    \lcpf&: F(\Z^2) \rightarrow \oli(\eucl),
  \end{align}
such that $\lcp(v)$ is in contact with $\lcpf(f)$ whenever $v$ and $f$ are incident, see Figure~\ref{fig:cyclepattern}. The \emph{conical net} of a cycle pattern $\lcp$ is the map $\lcpm: \Z^2 \rightarrow \eucl$, where $\lcpm(v)$ is the center of $\lcp(v)$ for all $v\in \Z^2$.
\end{definition}

It is not difficult to see that the conical net of a cycle pattern also satisfies Lemma~\ref{lem:conicalreflectionid}. Hence, the conical net $\lcpm$ of a cycle pattern $\lcp$ is indeed a conical net in the sense of Definition~\ref{def:circlepattern}.

A cycle pattern is constructed from a conical net analogously to the circle pattern case. Begin by choosing an initial line $\lcpf(f_0)$. Now, if $f,f'$ are adjacent faces, then $\lcpf(f) = \rho(f,f') \circ  \lcpf(f')$, where $\rho(f,f')$ is the reflection defined by $\lcpm$, as introduced in Section~\ref{sec:conicalandcp}. Since this construction involves freely choosing an initial line, there is a 2-parameter family of cycle patterns that corresponds to the same conical net.

For an oriented timelike or null-sphere $S \in \osp(\lor)$, consider the plane $E$ parallel to $\eucl$ and containing $\centersOf{S}$. Let $\smallc(S) \in \oci(\lor)$ denote the spacelike circle $S \cap E$, which inherits its orientation from $S$ (and is the smallest Euclidean circle contained in $S$). Note that if $S$ is a null-sphere then $\smallc(S)$ has radius 0.

\begin{theorem}\label{th:cctolcp}
	Every contact congruence $\cc$ defines a cycle pattern $\lcp$ via the orthogonal projection $\pi_{\eucl}$ by
	\begin{align}
		\lcp(v) &= \pi_{\eucl} \circ \smallc(\cc(v)), & \lcpf(f) &= \pi_{\eucl} \circ \ccf(f).
	\end{align}
	The conical net $\lcpm$ is the orthogonal projection $\pi_\eucl(\ccm)$ of the center net $\ccm$.
\end{theorem}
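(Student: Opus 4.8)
The plan is to verify the three defining conditions of a cycle pattern (Definition~\ref{def:laguerrecp}) for the pair $(\lcp,\lcpf)$ and then read off the conical net. Fix a vertex $v$ and an incident face $f$, write $S=\cc(v)$, $L=\ccf(f)$, with $m=\ccm(v)$ the center and $r^2\ge 0$ the squared radius of $S$, and first check that the two maps take values in the right spaces. By definition $\smallc(S)=S\cap E$ lies in the plane $E$ through $m$ parallel to $\eucl$ and is a spacelike circle of Euclidean radius $\sqrt{r^2}$ centered at $m$ (of radius $0$ when $S$ is a null-sphere); since $E\parallel\eucl$, the projection $\pi_\eucl$ maps it isometrically onto an oriented circle, so $\lcp(v)\in\osp(\eucl)$. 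For $\lcpf$, recall from Section~\ref{sec:laguerre} that an isotropic line meets $\eucl$ at a $45^\circ$ angle, so $\ell:=\pi_\eucl(L)$ is a genuine line; moreover $\pi_\eucl$ restricts to a bijection $H_L\to\eucl$ sending $L$ to $\ell$, so the orientation of $L$ (a choice of one region of $H_L\setminus L$) transports to a choice of half-plane of $\eucl\setminus\ell$, i.e.\ an orientation of $\ell$, giving $\lcpf(f)\in\oli(\eucl)$.

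The heart of the matter is the (unoriented) tangency of $\ell$ and $\lcp(v)$. Here I would use that the isotropic lines contained in a timelike sphere are exactly its two rulings (a timelike sphere is a one-sheeted hyperboloid, hence doubly ruled), and that the rulings of a one-sheeted hyperboloid project orthogonally onto the tangent lines of its throat circle $\smallc(S)$. Concretely, picking $p\in L$ with isotropic direction $d$ and setting $u=p-m$, the condition $L\subset S$ is equivalent to $\sca{u,u}=r^2$ and $\sca{u,d}=0$, together with $\sca{d,d}=0$. A short Lagrange-type identity then yields that the squared Euclidean distance of $\pi_\eucl(m)$ from $\ell$ equals $(u_1d_2-u_2d_1)^2/(d_1^2+d_2^2)=r^2$, exactly the squared radius of $\lcp(v)$; hence $\ell$ is tangent to $\lcp(v)$. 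When $S$ is a null-sphere ($r^2=0$) this degenerates to $\ell$ passing through $\pi_\eucl(m)$, the correct notion of tangency to a radius-$0$ circle.

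The remaining step, which I expect to be the main obstacle, is to upgrade this to \emph{oriented} contact. By the dictionary of Section~\ref{sec:laguerre}, $S$ and $L$ being in oriented contact means the orientation of $L$ agrees with the orientation $L$ inherits as a component of $C=H_L\cap S$, which records whether the orientation of $S$ points towards or away from $m$. I would translate both sides through $\pi_\eucl$: the orientation of $S$ induces a co-orientation (inward or outward normal) of $\smallc(S)$, and hence of $\lcp(v)$ at the tangent point, while the chosen region of $H_L\setminus L$ projects---via the bijection $\pi_\eucl\colon H_L\to\eucl$ above---to the chosen half-plane of $\eucl\setminus\ell$. Oriented contact of $\lcp(v)$ and $\lcpf(f)$ is precisely the compatibility of these two co-orientations at the tangent point, and I expect it to follow once one checks that $\pi_\eucl$ respects the ``towards/away from $m$'' data. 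Each individual correspondence is elementary, but assembling the signs consistently is where the care lies.

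Finally, the conical net is immediate: the center of $\lcp(v)=\pi_\eucl(\smallc(S))$ is $\pi_\eucl(m)=\pi_\eucl(\ccm(v))$, so $\lcpm=\pi_\eucl\circ\ccm$ as claimed. Since $(\lcp,\lcpf)$ is then a cycle pattern, the discussion following Definition~\ref{def:laguerrecp} shows that its center map $\lcpm$ is a conical net; equivalently, $\ccm$ is an isotropic conjugate net by Lemma~\ref{lem:centerNetIsIsoConjugate}, and its orthogonal projection has planar faces.
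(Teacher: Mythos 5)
Your proposal is correct and follows the same decomposition as the paper's proof: project the throat circle $\smallc(\cc(v))$ and the isotropic lines, verify tangency, defer the oriented-contact sign check to the conventions of Section~\ref{sec:laguerre}, treat the null-sphere case as the radius-zero degeneration, and read off the centers. The one genuine difference is how tangency is established. The paper argues synthetically in one line: an isotropic generator of a timelike sphere $S$ meets the throat circle $\smallc(S)$ in a point where the tangent plane of $S$ is orthogonal to $\eucl$, so its projection is contained in (hence equals) the tangent line of $\pi_{\eucl}(\smallc(S))$ at the projected point. You instead compute directly from $\sca{u,u}=r^2$, $\sca{u,d}=0$, $\sca{d,d}=0$ and the Lagrange identity that the squared Euclidean distance from $\pi_{\eucl}(m)$ to $\pi_{\eucl}(L)$ is exactly $r^2$; the computation checks out (one gets $(u_1d_2-u_2d_1)^2 = r^2 d_3^2$ and $d_1^2+d_2^2=d_3^2$). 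Your route is more self-contained, as it does not presuppose that each generator passes through the throat circle, and it handles the null case uniformly as $r^2=0$; the paper's argument is shorter and coordinate-free. On the orientation issue, which you flag as the main remaining obstacle, the paper's own proof is no more detailed than yours --- it likewise just cites ``the arguments of Section~\ref{sec:laguerre}'' --- so nothing is missing relative to the published argument.
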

\begin{proof}
  Let $S \in \osp$ be a timelike sphere. Since the tangent planes at $\smallc(S)$ are orthogonal to the plane $\eucl \subset \lor$, the orthogonal projection $\pi_\eucl(L)$ of any isotropic line $L \subset S$ is tangent to  $\pi_\eucl(\smallc(S))$.  The correct orientation of the contact is due to the arguments of Section~\ref{sec:laguerre}. If $S$ is a null-sphere then $\smallc(S)$ has radius 0. Therefore $\pi_\eucl(\smallc(S))$ is a point contained in $\pi_\eucl(L)$ and the statement follows.
\end{proof}

Consider a contact congruence $\lorlift{\cp}$ obtained as a Lorentz lift from a circle pattern $\cp$, and the projected cycle pattern $\lcp$. Then each circle $\cp(v)$ is concentric with $\lcp(v)$, and each oriented line $\lcp(f)$ contains the intersection point $\cp(f)$. Moreover, since $\lcp$ defines the same connection $\rho$, it is also possible to construct a Lorentz lift $\lorlift{\lcp}$ that agrees with $\lorlift{\cp}$.

\begin{remark}
  In Section~\ref{sec:laguerre} we explained how $\cyc \simeq \R^{2,2}$ is the cyclographic model of $\lor \simeq \R^{2,1}$. However, it is also possible to interpret $\R^{2,1}$ as the cyclographic model of $\eucl \simeq \R^2$. In that way, every point $x \in \lor$ defines an oriented circle $\xi(x)$ in $\eucl$, and the center of $\xi(x)$ is the orthogonal projection of $x$ onto $\eucl.$ Moreover, every isotropic plane $E$ in $\lor$ defines an oriented line $\xi(E)$ in $\eucl$. If $x\in E$, then $\xi(x)$ is in contact with $\xi(E)$. Now, consider a contact congruence $\cc$ and its center net $\ccm$. Since $\ccm$ is an isotropic conjugate net, $\xi(\ccm)$ is indeed a cycle pattern. Furthermore, if $\cc$ is the Lorentz lift of a circle pattern $\cp$ and a cycle pattern $\lcp$, then each line $\xi(\ccm)(f)$ contains $\cp(f)$ and is orthogonal to $\lcp(f)$. The orthogonality is due to the fact that $\lcp(f)$ is the projection of an isotropic line contained in the isotropic plane that is intersected with $\eucl$ to obtain $\xi(\ccm)(f)$. Combined, the two cycle patterns define an oriented orthogonal frame at each face. The Lie transformations of $\lor$ correspond to conformal oriented frame transformations in $\eucl$. We will investigate this relation in more depth in future research. 
\end{remark}



\section{Null congruences and incircular nets, s-embeddings} \label{sec:nullcongruences}

Recall from Definition~\ref{def:circlepacking} that we call a circle pattern $\cp$ a circle packing if adjacent circles in $\cpb$ are touching. Also recall that as a consequence, $\cpmb$ is an incircular net, and the centers of the incircles are given by $\cpmw$ (although the circles of $\cpw$ and the incircles do not coincide, see Figure~\ref{fig:circlepacking}). In the previous section we saw that there is a relation between circle patterns and cycle patterns, now we investigate how this relation specializes in the case of circle packings.

\begin{definition}\label{def:nulllaguerrecp}
  A \emph{Laguerre incircular net} is a cycle pattern $\lcp$ such that the radii of all circles $\lcpb$ are zero.
\end{definition}

Let $\cp$ be a circle packing. Then we define a Laguerre incircular net $\lcp$ by setting $\lcpm = \cpm$ and by setting $\lcpb$ equal to circles of radii zero (with centers $\cpmb$). Consequently, each line $\lcpf(f)$ needs to be the line
\begin{align}
	\cpmb(b)\vee \cpmb(b') = \lcpmb(b)\vee \lcpmb(b'),
\end{align}
where $b,b'$ are the two black vertices incident to $f$. Furthermore, the circles $\lcpw$ need to be the incircles of the incircular net $\cpmb$. The orientations of the lines and circles of $\lcp$ are not canonically given. However, if we choose the orientation of one line $\lcp(f_0)$, then all orientations are fixed because of the contact conditions on $\lcp$.  One may also obtain the orientations iteratively from the reflections $\rho$ assigned to edges. Since $\rho$ comes from $\cp$ and is flat, the orientations of $\lcp$ are well-defined.

Conversely, every Laguerre incircular net $\lcp$ defines a unique incircular net $\cpb$ just by forgetting the orientations.

\begin{remark}
  Let us add here a remark provided for the reader familiar with s-embeddings and the way s-embeddings are usually considered as a special case of t-embeddings, for example in \cite{klrr}, since we employ slightly different combinatorics. Given an incircular net $\cpmb: \Zb^2 \rightarrow \eucl$, the corresponding circle pattern $p$ consists of a circle per vertex of $\Z^2$ and a point per face of $\Z^2$. In contrast, in the setup of \cite{klrr} they consider the graph $G$ which is $\Z^2$ but with an extra edge $(b,b')$ for every face $f$ of $\Z^2$, where $b,b'$ are the two black vertices of $f$. Subsequently, given $\cpmb$ they define the circle pattern $\cp'$ on $G$ which satisfies that the circles of $\cp'$ coincide with the circles of $\cp$. However, since $G$ has two faces per face of $\Z^2$, $\cp'$ also has two intersection points per face of $\Z^2$, one of which coincides with the intersection point of $\cp$. The other point is the intersection point of $\cp$ after performing Miquel dynamics (see Section~\ref{sec:miquel}). In fact, by using the local Miquel dynamics rule (see~\cite{klrr}) at every other circle of $\cpw$, the graph $G$ becomes $\Z^2$. Therefore the approach in \cite{klrr} is related to our approach by a sequence of Miquel moves. The same is true if $\Zb^2$ is replaced by a quad graph with bipartite dual. If the dual is not bipartite, the \cite{klrr} approach still works, but our approach is not possible (without using a branched cover). However, since our goal are S-isothermic nets, for which the dual needs to be bipartite anyway, we do not consider this a loss for our present work. Instead, we prefer to work with only one intersection point per face of $\Z^2$, since in this manner we can treat null congruences as a special case of contact congruences without a change in combinatorics.
\end{remark}

In Section~\ref{sec:lorlift}, we showed how to geometrically construct a Lorentz lift $\lorlift\cp$ of a conical net $\cpm$, by choosing an initial isotropic line $\lorlift\cpf(f_0)$ associated with a face $f_0$ of $\Z^2$. The isotropic lines $\lorlift\cpf$ of all other faces are then well defined by the reflections $\lorlift\rho$ about the vertical planes $\lorlift\ell$ containing the edges of $\cpm$. These isotropic lines are generators of the Lorentz spheres $\lorlift\cp$ living at the vertices, in the sense that if $v\in\Z^2$ is incident to $f\in F(\Z^2)$ then the Lorentz sphere $\lorlift\cp(v)$ is obtained by rotating $\lorlift\cpf(f)$ around the vertical line containing $\cpm(v)$.

For an incircular net $\cpb$ there is a preferred choice for the initial isotropic line  $\lorlift\cpf(f_0)$ in the construction of the Lorentz lift. We choose the initial isotropic line $\lorlift\cpf(f_0)$ in the vertical plane that contains $\cpmb(b_0) \vee \cpmb(b_1)$, where $b_0, b_1$ are the two black vertices incident to $f_0$. Let $f_1$ be a face adjacent to $f_0$ such that $f_0$ and $f_1$ share $b_1$ and a white vertex $w$, and let $b_2$ denote the second black vertex of $f_1$. Then $\cpmw(w) \vee \cpmb(b_1)$ is an angle bisector of the two lines 
\begin{align}
	l_0 &= \cpmb(b_0) \vee \cpmb(b_1), & l_1 &= \cpmb(b_1) \vee \cpmb(b_2).
\end{align}
As a result, the reflection $\rho(w,b_1)$ maps $l_0$ to $l_1$ and vice versa. Hence, $\lorlift\cpf(f_1)$ is in the vertical plane above $l_1$, and the two isotropic lines $\lorlift\cpf(f_0)$, $\lorlift\cpf(f_1)$ intersect in the vertical axis through $\cpmb(b_1)$. Therefore all isotropic lines are contained in the vertical planes above the edges of the incircular net. Moreover, all isotropic lines  $\lorlift\cpf(f)$ of faces $f$ adjacent to a fixed black vertex $b$ intersect in a point above $\cpmb(b)$. Consequently, all spheres $\lorlift{\cpb}$ are actually null-spheres and thus $\lorlift{\cp}$ is a null congruence as defined in Definition~\ref{def:nullcongruence}.

We are now ready to prove the converse of the above which is the statement of Theorem~\ref{th:nullcctopacking}: every projection of a null congruence yields a circle packing.

\begin{proof}[Proof of Theorem~\ref{th:nullcctopacking}]
  Let $\cc$ be a null congruence and let $\cp$ be the circle pattern defined by Theorem~\ref{th:cctocp}, that is, the intersections of the spheres of $\cc$ with $\eucl$. By definition, the spheres $\ccb$ are null-spheres. As a direct result, if $b,b'$ are adjacent vertices in $\Zb^2$, the circles $\cpb(b)$, $\cpb(b')$ are touching.
\end{proof}

Of course, it follows immediately that $\cpmb$ is an incircular net if $\cc$ is a null congruence, since $\cp$ is a circle packing. Moreover, it also follows that the cycle pattern $\lcp$ obtained from a null congruence $\cc$ via Theorem~\ref{th:cctolcp} is a Laguerre incircular net as in Definition~\ref{def:nulllaguerrecp}.

\begin{remark}
	Let $\cc$ be a null congruence and let $b_1,b_2,b_3,b_4$ be four vertices adjacent to a white vertex $w$ in cyclic order. Then the line $\ccmb(b_1) \vee \ccmb(b_3)$ is Lorentz orthogonal to the line $\ccmb(b_2) \vee \ccmb(b_4)$. As such, $\ccmb$ is the \emph{control net} of an \emph{orthogonal checkerboard pattern}, see \cite{dellingercheckerboard}. These orthogonal checkerboard patterns are also explored as \emph{orthogonal binets} \cite{atbinets}. A special case of orthogonal binets are \emph{principal binets}, which satisfy additional local coplanarity constraints. It would be interesting to understand the set of incircular nets and null congruences that corresponds to principal binets. Principal binets also come with a Lie lift, which appears to be different from (but may be related to) the Lie lift discussed in this paper.
\end{remark}

Let us mention a characterization of null congruences that will specialize neatly to isothermic congruences in Lemma~\ref{lem:twopointcharacterization}.

\begin{lemma}
	Let $\ccw: \Zw^2 \rightarrow \osp_-$ be such that for all adjacent $w,w'\in \Zw^2$ the spheres $\ccw(w)$, $\ccw(w')$ are in oriented contact. Then there exists a null congruence $\cc$ such that $\ccw$ is its restriction to $\Zw^2$ if and only if the four spheres of $\ccw$ around each white face $f$ intersect in a point $P_f$.
\end{lemma}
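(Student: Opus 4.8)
The plan is to prove both implications, with the forward direction being essentially immediate and the converse carrying all the content.

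For necessity, suppose a null congruence $\cc$ with $\cc|_{\Zw^2}=\ccw$ exists. By Definition~\ref{def:nullcongruence} each black sphere $\cc(b)$ is a null-sphere, and I write $P_b=\ccmb(b)$ for its center. Since $\cc$ is a contact congruence, $\cc(b)$ is in oriented contact with $\ccw(w)$ for every white neighbor $w$ of $b$; and by the discussion in Section~\ref{sec:laguerre} a null-sphere is in oriented contact with an oriented timelike sphere exactly when its center lies on that sphere. Hence $P_b\in\ccw(w)$ for all four white neighbors $w$ of $b$. As the white face $f\in F(\Zw^2)$ corresponding to $b$ is bounded precisely by these four white vertices, the four spheres $\ccw(w)$ around $f$ all pass through $P_f:=P_b$, which is the asserted intersection point.

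For sufficiency I would pass to the cyclographic model $\cyc$ and invoke Lemma~\ref{lem:isotropiccyclift}. Given the intersection points $P_f$, define $\cyclift\cc:\Z^2\to\cyc$ by $\cyclift\cc(w)=\cyclift{\ccw}(w)$ on white vertices and $\cyclift\cc(b)=(P_b,0)$ on black vertices, where $P_b=P_f$ for the white face $f$ at $b$; the latter is the cyclographic lift of the null-sphere centered at $P_b$. By Lemma~\ref{lem:isotropiccyclift} it suffices to show that $\cyclift\cc$ is a conjugate net whose face planes are fully isotropic, for then $\cyclift\cc$ is the cyclographic lift of a contact congruence whose black spheres are null-spheres, i.e.\ a null congruence restricting to $\ccw$. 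Fix a face with white vertices $w,w'$ and black vertices $b,b'$ and set $A=\cyclift\cc(b)$, $B=\cyclift{\ccw}(w)$, $C=\cyclift\cc(b')$, $D=\cyclift{\ccw}(w')$. Recall that oriented contact corresponds to null distance in $\cyc$. The edges $AB,BC,CD,DA$ are null because each records a black–white contact $P_b\in\ccw(w)$, which holds by hypothesis, and the diagonal $BD$ is null because $w,w'$ are adjacent in $\Zw^2$ and $\ccw(w),\ccw(w')$ touch. Then $B-A$ and $D-A$ are null, and polarisation gives $B-A\perp C-A$, $C-A\perp D-A$, $B-A\perp D-A$ for free; so if \emph{also} the diagonal $AC$ is null, the three vectors $B-A,C-A,D-A$ are null and pairwise $\cyc$-orthogonal, hence span a totally isotropic subspace of $\cyc\simeq\R^{2,2}$. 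Such a subspace has dimension at most two, so the four points are coplanar and their plane is fully isotropic, as required.

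Everything therefore reduces to the single claim that $AC$ is null, i.e.\ $\cycsca{A-C,A-C}=0$, equivalently $\lorsca{P_b-P_{b'},P_b-P_{b'}}=0$ so that $P_b\vee P_{b'}$ is isotropic. Geometrically, both $P_b$ and $P_{b'}$ lie in $\ccw(w)\cap\ccw(w')$; since these two timelike spheres touch, their intersection is a pair of isotropic lines meeting at the contact point and lying in the common timelike tangent plane. So $P_b,P_{b'}$ lie in that plane, and the claim is exactly that they lie on one and the same of the two isotropic lines, in which case $\ccf(f)=P_b\vee P_{b'}$ is the common generator realising the face. I expect this to be the main obstacle: it fails for two points chosen freely in a union of two lines, so it cannot be read off from a single face, and the hypothesis must be used through the \emph{full star} of each black vertex. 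Concretely, I would fix a white vertex $w$, observe that the apexes of its four black neighbors all lie on $\ccw(w)$, and propagate the choice of generator of $\ccw(w)$ from face to face around $w$ — consecutive generators belonging to opposite rulings and meeting at these apexes — demanding that it close up. This closing condition is a flatness statement entirely analogous to the well-definedness argument in the proof of Lemma~\ref{lem:centerNetIsIsoConjugate}, and the geometric content of the hypothesis (that the four cyclically touching white spheres around each black vertex actually pass through a common point) is precisely what forces these generators to close up into the null-cone at $P_b$, placing $P_b$ and $P_{b'}$ on a shared generator on every face and thereby establishing the nullity of $AC$.
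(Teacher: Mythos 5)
Your necessity argument is correct, and your reduction of sufficiency is exactly right: via the cyclographic lift and Lemma~\ref{lem:isotropiccyclift} everything comes down to the single claim that the two black apexes $P_b$, $P_{b'}$ of a face are lightlike separated, equivalently that they lie on one and the same of the two isotropic generators making up $\ccw(w)\cap\ccw(w')$. You have correctly isolated the one non-trivial verification. For comparison, the paper's entire proof is the single sentence ``if an oriented sphere contains $P_f$ then it is also in oriented contact with the null-sphere centered at $P_f$'', which establishes only the white--black contacts and silently passes over both the black--black contact and the existence of the face lines $\ccf$ required by Definition~\ref{def:contactcongruence}; your analysis goes well beyond it.

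The gap is in your final paragraph, and I do not believe the propagation argument you sketch can close it from the stated hypotheses. The hypothesis at the black vertex $b$ forces $P_b$ to lie on $\ccw(w)\cap\ccw(w')$, i.e.\ on one of the two generators through the contact point $Q$ of the two white spheres, and likewise for $P_{b'}$; but which of the two generators each apex selects is a binary choice recorded only in the \emph{other} two white spheres of the respective star, and the stars at $b$ and $b'$ share no spheres besides $\ccw(w)$ and $\ccw(w')$. Concretely, with $\ccw(w)=\{x_1^2+x_2^2-x_3^2=1\}$ and $\ccw(w')$ touching it at $Q=(1,0,0)$, the intersection is the pair of generators $(1,t,\pm t)$; one can take $P_b=(1,1,1)$ and $P_{b'}=(1,1,-1)$ and then complete each star separately (the remaining two spheres of each star form a positive-dimensional family through the chosen apex subject to the required oriented contacts), so that every hypothesis of the lemma holds while $\lorsca{P_b-P_{b'},P_b-P_{b'}}=-4\neq 0$ and no isotropic line $\ccf(f)$ exists. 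So the step does not merely need more work --- it fails, and what your attempt has really uncovered is that the sufficiency direction of the lemma needs an additional hypothesis (e.g.\ that adjacent intersection points $P_f$, $P_{f'}$ are lightlike separated, which is exactly the missing black--black contact), a point the paper's one-line proof does not engage with.
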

\begin{proof}
  If an oriented sphere contains $P_f$ then it is also in oriented contact with the null-sphere centered at $P_f$.
\end{proof}



\section{Miquel dynamics}\label{sec:miquel}

\begin{figure}
	\centering
	\begin{minipage}{.44\textwidth}
		\includegraphics[width=\textwidth]{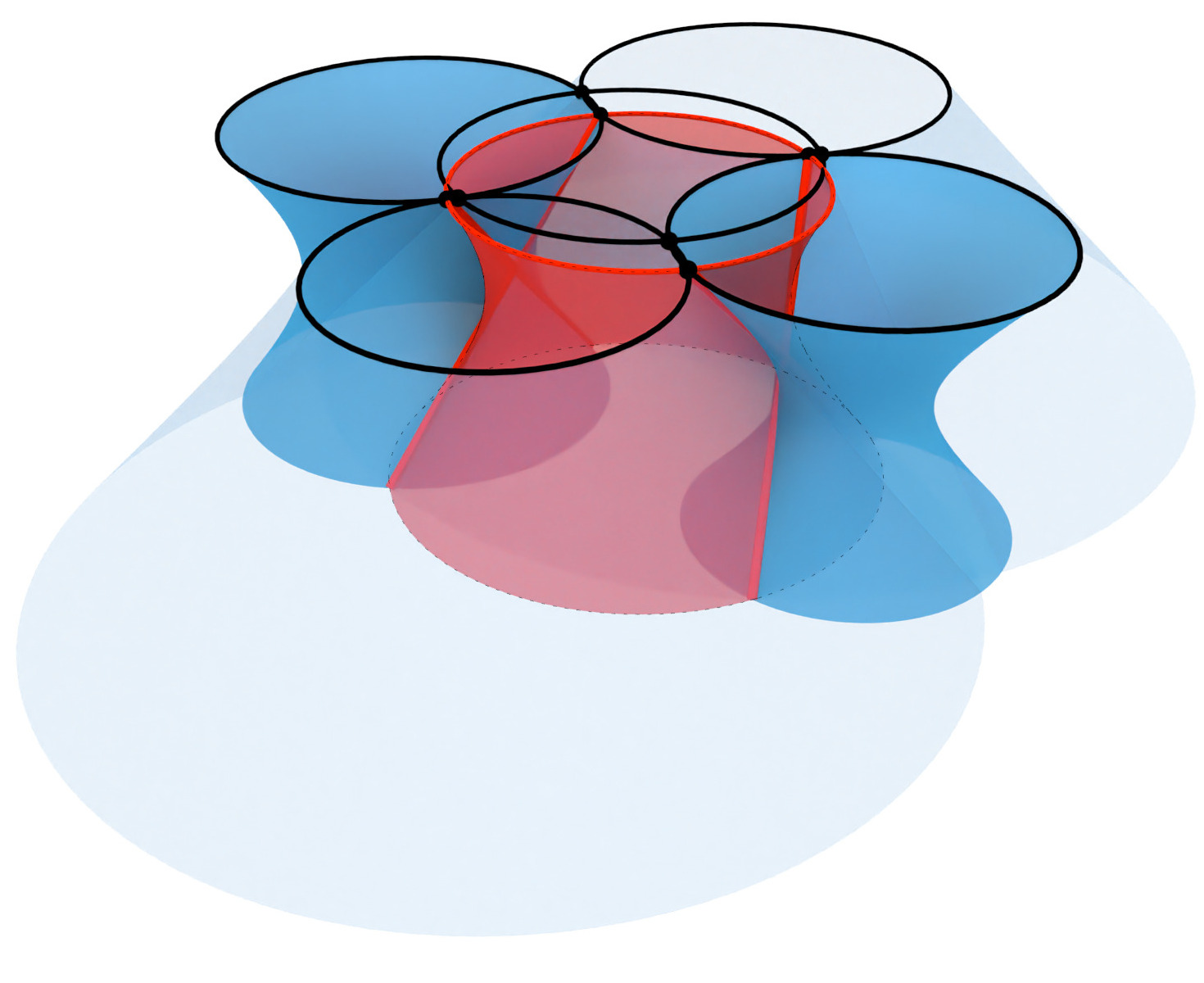}
	\end{minipage}
\hspace{.8cm}
	\begin{minipage}{.44\textwidth}
		\includegraphics[width=\textwidth]{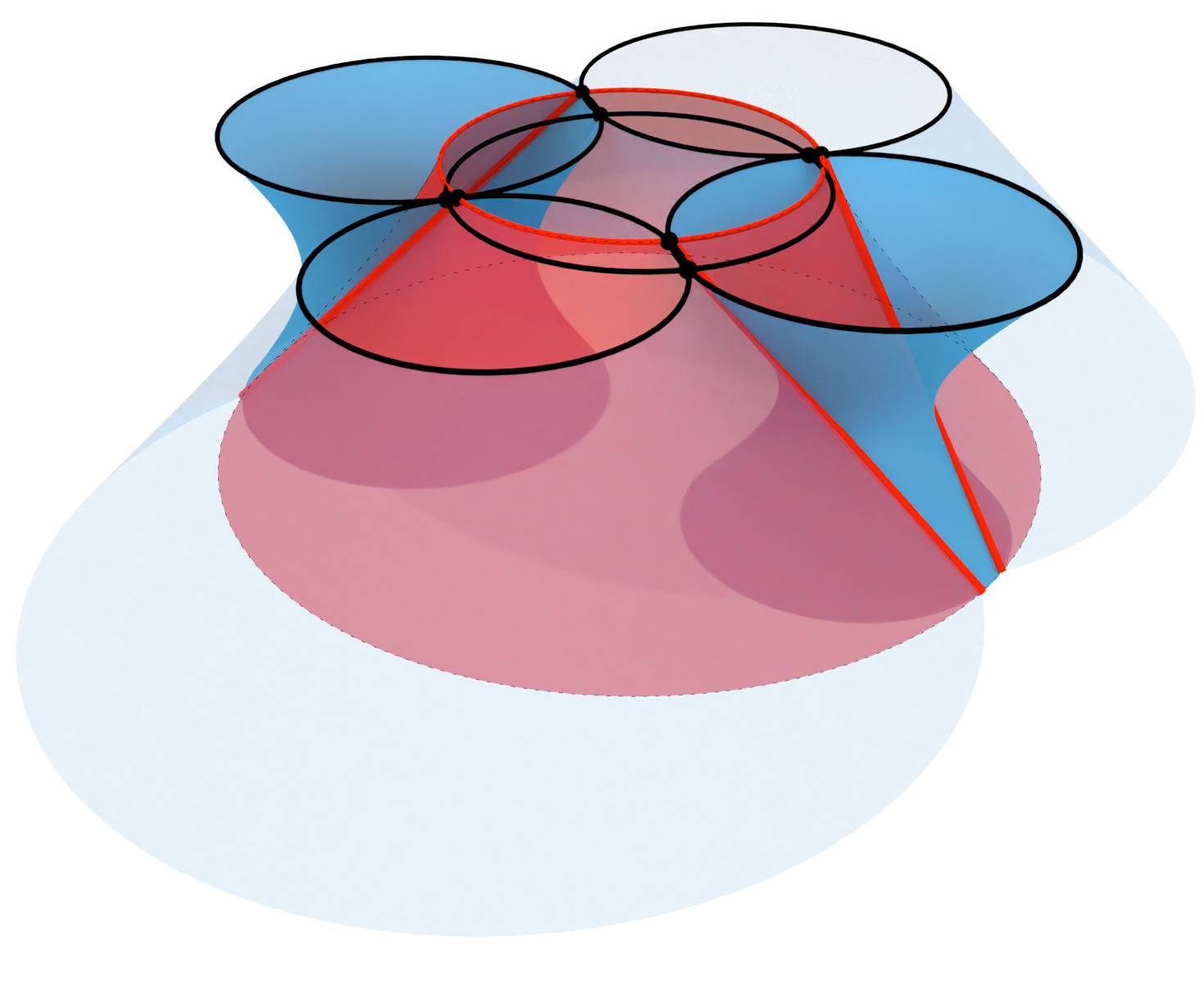}
	\end{minipage}

	\caption{Miquel dynamics in a contact congruence, and in the corresponding circle pattern.}
	\label{fig:miquel}
\end{figure}

Miquel dynamics were introduced by Ramassamy \cite{ramassamymiquel} following an idea of Kenyon as a discrete time dynamics for circle patterns. We translate the basic definition to the contact congruence setup, since we need Miquel dynamics and the generalization to contact congruences in Section~\ref{sec:isothermiccongruences} and Section~\ref{sec:subvariety}.

\begin{theorem}\label{th:ccmiquel}
  Let $S_1,S_2,S_3,S_4 \in \osp_-(\lor)$ be four timelike Lorentz spheres that are cyclically in oriented contact, and such that there is a sphere $S$ in oriented contact with $S_1,S_2,S_3,S_4$. Then there is a (generically) unique second sphere $\miq{S}$ that is also in oriented contact with $S_1,S_2,S_3,S_4$, see Figure~\ref{fig:miquel}. We say $\miq{S}$ replaces $S$ when applying \emph{Miquel dynamics} to $S$.
\end{theorem}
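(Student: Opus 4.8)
The plan is to recast the statement in Lie sphere geometry (Section~\ref{sec:lie}), where it reduces to an elementary line--quadric intersection count. Represent each oriented sphere by its point on the Lie quadric $\mathcal Q \subset \RP^5$ of signature $\si{+++---}$: write $\hat S_i$ for the point of $S_i$ and $\hat S$ for the point of $S$. The key dictionary entry is that two objects are in oriented contact exactly when their points are conjugate (polar) with respect to $\mathcal Q$, so the hypotheses become $\sca{\hat S,\hat S_i} = 0$ for $i = 1,2,3,4$ (and the cyclic contacts become $\sca{\hat S_i,\hat S_{i+1}} = 0$), where $\sca{\cdot,\cdot}$ denotes the Lie-geometric bilinear form. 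I prefer Lie over Möbius geometry here because the contact is \emph{oriented} and because $S$ need not be timelike, and Lie geometry treats all sphere types (including the null-spheres of $\osp_0$) uniformly.

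First I would describe the full locus of oriented spheres in simultaneous oriented contact with $S_1,S_2,S_3,S_4$. Setting $V = \operatorname{span}(\hat S_1,\hat S_2,\hat S_3,\hat S_4)\subseteq\R^6$, this locus is precisely $\RP(V^\perp)\cap\mathcal Q$, the projectivized polar complement intersected with the quadric. Generically the four points are linearly independent, so $\dim V = 4$, hence $\dim V^\perp = 2$, and $\RP(V^\perp)$ is a line $\ell\subset\RP^5$. The assumed sphere $S$ shows $\hat S\in\ell\cap\mathcal Q$, so $\ell$ already meets the quadric in (at least) one point.

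Next comes the punchline: a projective line meets a quadric in at most two points unless it lies entirely inside it. Since $\ell$ runs through the point $\hat S\in\mathcal Q$, generically $\ell$ is a secant — neither tangent to $\mathcal Q$ at $\hat S$ nor contained in $\mathcal Q$ — so $\ell\cap\mathcal Q = \{\hat S,\hat S'\}$ for a unique second point $\hat S'$. The sphere $\miq{S}$ represented by $\hat S'$ lies in $\RP(V^\perp)$, hence is in oriented contact with all four $S_i$ by construction, and is the asserted second solution. This is the Lie-geometric incarnation of the circle picture in Figure~\ref{fig:miquel}.

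Finally I would account for the word ``generically,'' which I expect to be the only delicate part. Three things can fail: the $\hat S_i$ may span less than a projective $3$-space, in which case $V^\perp$ is larger than a line and one gets a whole family of common-contact spheres; the line $\ell$ may be tangent to $\mathcal Q$ at $\hat S$, forcing $\hat S' = \hat S$ so that $\miq{S} = S$; or $\ell$ may be contained in $\mathcal Q$. Each of these is a closed condition on the configuration. One must also check that the second intersection point $\hat S'$ really represents an oriented sphere (allowing null-spheres), rather than one of the oriented planes or points that the Lie quadric also parametrizes; this likewise holds off a closed degeneracy locus, and in the Miquel-dynamics application $S$ is the null-sphere centered at the common point, so $\miq{S}$ is again a sphere of the expected type. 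The core existence-and-uniqueness, by contrast, is just the fact that a generic secant meets a quadric in two points, so the real work is bookkeeping these exceptional loci and fixing the intended meaning of ``sphere'' for $\miq{S}$.
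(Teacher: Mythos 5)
Your proposal is correct and follows essentially the same route as the paper: lift to the Lie quadric in $\RP^5$, identify the common-contact spheres with the polar complement of the span of the four points (generically a line), and observe that this line, passing through the point of $S$, generically meets the quadric in exactly one further point. Your discussion of the degenerate cases (tangency giving $\miq{S}=S$, and the line lying inside the quadric) also matches the remark the paper places immediately after its proof.
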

\begin{proof}
	The spheres $S, S_i$ correspond to points $P, P_i$ on the Lie quadric in $\RP^5$. Every sphere in contact with $S_1, S_2, S_3, S_4$ corresponds to a point in the polar complement of $P_1 \vee P_2 \vee P_3 \vee P_4$, which is a line $L \subset \RP^5$. Since $P$ is in the Lie quadric and on $L$, the line $L$ intersects the Lie quadric, and generically there is a second intersection point $\miq{P}$ which corresponds to the second sphere $\miq{S}$. 
\end{proof}

\begin{remark}
  Note that if the line $L$ in the proof of Theorem~\ref{th:ccmiquel} is a tangent to the Lie quadric (with only one contact point) then we define $\miq{S}=S$. If $L$ happens to be an isotropic line, then this is a very singular case in which all the spheres belong to a contact element. In this case $\miq{P}$ may be defined as the point on $L$ that has multi-ratio $-1$ with the other five points. However, we do not need this since we assume the generic case.
\end{remark}

We may apply Theorem~\ref{th:ccmiquel} to every vertex-star of a contact congruence $\cc$. Consider a vertex $v \in \Z^2$ and its four neighbours $v + e_1$, $v + e_2$, $v - e_1$, $v - e_2$. Then the spheres
\begin{align}
	\cc(v), \quad \cc(v + e_1), \quad \cc(v + e_2), \quad \cc(v - e_1), \quad \cc(v - e_2),
\end{align}
satisfy the assumptions of Theorem~\ref{th:ccmiquel}. Therefore the sphere $\miq \cc(v)$ exists as well. Moreover, applying Miquel dynamics to every black sphere $\ccb$ results in a new contact congruence which we denote by $\miq\cc$. For every face $f = (w_1, b_1, w_2, b_2)$ we denote by $\miq \ccf(f)$ the new isotropic line contained in 
\begin{align}
	\miq\cc(w_1) = \cc(w_1), \quad \miq\cc(b_1), \quad \miq\cc(w_2) = \cc(w_2), \quad \miq\cc(b_2) .
\end{align}
Note that since the white spheres $\cc(w_1)$, $\cc(w_2)$ remain unchanged, the new isotropic line $\miq \ccf(f)$ is actually the unique other isotropic line besides $\ccf(f)$ contained in the common tangent plane of $\cc(w_1)$ and $\cc(w_2)$.

Let us briefly explain how Miquel dynamics on contact congruences correspond to Miquel dynamics on circle patterns. By intersecting the configuration with $\eucl$ as in Theorem~\ref{th:cctocp}, it is not hard to see that Theorem~\ref{th:ccmiquel} implies \emph{Miquel's theorem} \cite{miquel}.

\begin{theorem}[Miquel's theorem]\label{th:cpmiquel}
	Let $C,C_1,C_2,C_3,C_4 \in \ci(\eucl)$ be five circles such that $C_i, C_{i+1}, C$ intersect in a common point for all four $i \in {1,2,3,4}$. Then there is a second circle  $\miq{C}$ such that $C_i, C_{i+1}, \miq C$ intersect in a common point for all four $i \in {1,2,3,4}$.
\end{theorem}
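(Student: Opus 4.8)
The plan is to lift the planar configuration to Lorentz space, apply the contact-congruence form of Miquel dynamics (Theorem~\ref{th:ccmiquel}), and then intersect the resulting sphere with $\eucl$. It helps to first reinterpret the statement. Writing $P_i := C\cap C_i\cap C_{i+1}$ (indices modulo $4$), the circle $C_i$ passes through both $P_{i-1}$ and $P_i$, so each consecutive pair $C_i, C_{i+1}$ already meets at $P_i$ and hence meets in a (generically unique) second point $Q_i$. The assertion is then equivalent to saying that the four second intersection points $Q_1,Q_2,Q_3,Q_4$ lie on a common circle $\miq C$, which will automatically satisfy $Q_i\in\miq C\cap C_i\cap C_{i+1}$.

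First I would realize the five circles as a vertex star of a circle pattern: the circle $C$ at a vertex $v$, the circles $C_1,\dots,C_4$ at its four neighbours, and the points $P_i$ at the four incident faces. Applying the Lorentz lift of Section~\ref{sec:lorlift} to this vertex star produces timelike spheres $S, S_1,\dots,S_4\in\osp_-(\lor)$ and oriented isotropic lines $L_i$ over the $P_i$, with $S\cap\eucl=C$, $S_i\cap\eucl=C_i$ and $L_i\cap\eucl=P_i$, such that each $L_i$ is a common isotropic generator of $S$, $S_i$ and $S_{i+1}$. Consequently the four spheres $S_1,S_2,S_3,S_4$ are cyclically in oriented contact (consecutive ones share the generator $L_i$, whence their planar intersection degenerates into a pair of isotropic lines), and $S$ is in oriented contact with all four. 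These are exactly the hypotheses of Theorem~\ref{th:ccmiquel}, which therefore yields a second sphere $\miq S$ in oriented contact with $S_1,S_2,S_3,S_4$.

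To finish, I would set $\miq C=\miq S\cap\eucl$, which is a circle by Theorem~\ref{th:cctocp}. For each $i$ the common tangent plane of the \emph{unchanged} spheres $S_i,S_{i+1}$ contains two isotropic lines: the old generator $L_i$ (with $L_i\cap\eucl=P_i$) and a second line $L_i'$, and since $L_i'\subset S_i\cap S_{i+1}$ one has $L_i'\cap\eucl\in C_i\cap C_{i+1}=\{P_i,Q_i\}$, so $L_i'\cap\eucl=Q_i$. As recorded in Section~\ref{sec:miquel}, passing from $S$ to $\miq S$ replaces $L_i$ by $L_i'$ as the new common generator of $\miq S, S_i, S_{i+1}$; intersecting with $\eucl$ then places $Q_i$ on all three circles $\miq C$, $C_i$, $C_{i+1}$, which is the claim.

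The \textbf{main obstacle} is precisely this last step: one must check that the sphere $\miq S$ produced abstractly by Theorem~\ref{th:ccmiquel} actually contains the \emph{second} generator $L_i'$ of each pair $S_i,S_{i+1}$, rather than touching these spheres along unrelated generators, so that the three circles genuinely share $Q_i$. This is the geometric heart of the argument and is exactly what makes Miquel dynamics well defined on contact congruences. I would establish it in the Lie-quadric picture used to prove Theorem~\ref{th:ccmiquel}, verifying that the common contact element of $\miq S$ with $S_i$ and $S_{i+1}$ is the one lying over $Q_i$. Finally I would treat the degeneracies as in the remark following Theorem~\ref{th:ccmiquel}: if $\miq S$ happens to be a null-sphere then $\miq C$ collapses to the single point $\miq S\cap\eucl$, and if the relevant polar line is tangent to the Lie quadric then $\miq S=S$ and $\miq C=C$.
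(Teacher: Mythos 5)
Your proposal is correct and follows exactly the paper's route: the paper derives Miquel's theorem by noting that Theorem~\ref{th:ccmiquel} applied to the Lorentz lift of the vertex star, followed by intersection with $\eucl$ as in Theorem~\ref{th:cctocp}, yields the second circle. You flesh out the details the paper leaves implicit (in particular the key point, also only asserted in Section~\ref{sec:miquel}, that $\miq{S}$ must contain the \emph{second} common generator of each pair $S_i, S_{i+1}$, whose trace on $\eucl$ is the second intersection point $Q_i$), and your treatment of that point and of the degenerate cases is consistent with the paper's.
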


Therefore, given a circle pattern $\cp$ we denote by $\miq{\cp}$ the new circle pattern that is the intersection of $\miq{\cc}$ with $\eucl$. It turns out that $\miq{\cp}$ is the circle pattern obtained from $\cp$ via Miquel dynamics as defined in \cite{ramassamymiquel}.



\section{Isothermic congruences}  \label{sec:isothermiccongruences}

Recall that we defined an isothermic congruence (Definition~\ref{def:isothermiccongruence}) as a contact congruence $\cc$ such that $\ccmw$ is a conjugate net (Definition~\ref{def:conjugatenet}).

Consider a black vertex $b\in \Zb^2$ and the four adjacent white vertices $w_1,w_2,w_3,w_4 \in \Zw^2$. By the definition of an isothermic congruence, the four sphere centers $\ccm(w_1)$, $\ccm(w_2)$, $\ccm(w_3)$, $\ccm(w_4)$ are contained in a plane, and since the four spheres $\cc(w_1), \cc(w_2), \cc(w_3), \cc(w_4)$ are touching, there is also the circle $C(b)$ that passes through the four points of contact. The latter observation is also known as the \emph{touching coins lemma} \cite{bhsminimal}.

Lifting this configuration to (Lorentz) Möbius geometry (see Section~\ref{sec:conformallorentzgeometry}), there is a line $L(b) \subset \RP^4$ of signature $\si{+-}$ that is polar to the plane representing the lifted circle $C(b)$. Each point on $L(b)$ corresponds to a sphere that contains $C(b)$. One of the intersection points of $L(b)$ with the Lorentz Möbius quadric corresponds to the null-sphere $\cc(b)$, the other intersection point corresponds to another null-sphere $\cc'(b)$, which also contains $C(b)$.

Let $f_1$, $f_2$, $f_3$, $f_4$ be the four faces adjacent to $b$. Because $\ccb(b)$ is a null-sphere, all four isotropic lines $\ccf(f_1)$, $\ccf(f_2)$, $\ccf(f_3)$, $\ccf(f_4)$ are contained in $\ccb(b)$ and pass through $\ccm(b)$. Moreover, the four common tangent planes of $\ccmw(w_i)$ with $\ccmw(w_{i+1})$ contain the axis of the circle $C(b)$. Hence, the four other isotropic lines $\miq{\ccf}(f_i)$ as defined in Section~\ref{sec:miquel} also intersect in the axis of $C(b)$. Therefore they are contained in $\ccb'(b)$, and the null-sphere $\ccb'(b)$ is actually the null-sphere $\miq{\ccb}(b)$ obtained by Miquel dynamics, see Theorem~\ref{th:ccmiquel}. Consequently, $\ccb'(b) = \miq{\cc}(b)$ is also in contact with the four spheres $\ccw(w_i)$. These observations lead to the following characterization of isotropic congruences.

\begin{lemma} \label{lem:twopointcharacterization}
  Let $\ccw: \Zw^2 \rightarrow \osp_-$ be such that for all adjacent $w,w'\in \Zw^2$ the spheres $\ccw(w)$, $\ccw(w')$ are in contact. Then there exists an isotropic congruence $\cc$ such that $\ccw$ is its restriction to $\Zw^2$ if and only if the four spheres of $\ccw$ around each white face $f$ intersect in two points $P_f$ and $\tilde P_f$.
\end{lemma}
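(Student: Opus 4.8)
By Definition~\ref{def:isothermiccongruence} the congruence sought in the statement is a null congruence $\cc$ whose white center net $\ccmw$ is a \emph{spacelike} conjugate net, with prescribed white restriction $\ccw$. The plan is to exploit the dictionary from Section~\ref{sec:laguerre}: a null-sphere centered at a point $P$ is in oriented contact with an oriented sphere $S$ exactly when $P\in S$. Hence ``the four white spheres around the white face $f$ (i.e.\ around the black vertex $b$) share a point $P$'' translates into ``the null-sphere centered at $P$ is in oriented contact with all four white spheres $\ccw(w_1),\dots,\ccw(w_4)$.'' Counting such points is therefore the same as counting the null-spheres that complete the four-sphere star.

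For the forward implication I would argue as in the discussion preceding this lemma. If $\cc$ is the congruence in question, the black sphere $\cc(b)$ is a null-sphere in oriented contact with the four adjacent white spheres, so its center $P_f=\ccm(b)$ lies on all four and is one common point. Since $\ccmw$ is a spacelike conjugate net, the four white centers are coplanar in a spacelike plane; by the touching coins lemma the four contact points then lie on a spacelike circle $C(b)$, whose polar line $L(b)\subset\RP^4$ has signature $\si{+-}$ and meets the M\"obius quadric $\mob$ in two points. One is $\cc(b)$; the other is the Miquel-dual null-sphere $\miq\cc(b)$ of Theorem~\ref{th:ccmiquel}, whose center $\tilde P_f$ therefore provides a second common point.

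For the converse, suppose the four white spheres around each $b$ meet in two points $P_f,\tilde P_f$. Having at least one common point, the analogous characterization of null congruences established above lets me extend $\ccw$ to a null congruence $\cc$, setting $\cc(b)$ to be the null-sphere centered at $P_f$. It remains to promote this to the congruence of Definition~\ref{def:isothermiccongruence}, i.e.\ to show $\ccmw$ is a spacelike conjugate net. Coplanarity of the four white centers $c_i=\ccmw(w_i)$ is elementary: since $P_f$ and $\tilde P_f$ both lie on $\ccw(w_i)$, subtracting the two equal squared radii gives the single linear relation
\begin{align}
	\lorsca{c_i,\,\tilde P_f-P_f}=\tfrac12\bigl(\lorsca{\tilde P_f,\tilde P_f}-\lorsca{P_f,P_f}\bigr),
\end{align}
independent of $i$, so all four centers lie in one plane with normal $n=\tilde P_f-P_f$.

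The main obstacle is to show this plane is spacelike, equivalently that $n$ is timelike; this is where the two \emph{distinct, real} intersection points are essential. Lifting to M\"obius geometry (Section~\ref{sec:conformallorentzgeometry}), a point lies on a white sphere iff its lift is polar to the sphere-point $\hat w_i$, so the two common points are exactly the intersections of the polar line $V^\perp=(\hat w_1\vee\cdots\vee\hat w_4)^\perp$ with $\mob$. Two distinct real intersections force $V^\perp$ to be a secant line of signature $\si{+-}$, whence its complement $V$ has signature $\si{++-}$ and $V\cap\mob$ is a spacelike circle --- namely $C(b)$, since it passes through the four contact points. Its axis is then a timelike line, and the two null-spheres on $V^\perp$ (centered at $P_f,\tilde P_f$ and containing $C(b)$) have their apices on this axis; hence $n=\tilde P_f-P_f$ is timelike and the plane of centers is spacelike. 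Thus $\ccmw$ is a spacelike conjugate net and $\cc$ satisfies Definition~\ref{def:isothermiccongruence}, as required.
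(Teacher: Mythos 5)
Your forward direction and the coplanarity computation in the converse are both correct, and you have rightly isolated the one point that the paper's own one-line proof leaves completely implicit: that the plane of the four white centers must be \emph{spacelike}, as Definition~\ref{def:isothermiccongruence} requires. Unfortunately the argument you supply for exactly that step does not work. The secant line $V^\perp=(\hat w_1\vee\cdots\vee\hat w_4)^\perp$ has signature $\si{+-}$ whenever it meets $\mob$ in two distinct points $\hat P_f,\hat{\tilde P}_f$, simply because the Gram matrix of two isotropic vectors with $\mobsca{\hat P_f,\hat{\tilde P}_f}\neq 0$ has signature $\si{+-}$ irrespective of the \emph{sign} of that inner product. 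Since (with the standard normalization of the lift) $\mobsca{\hat P_f,\hat{\tilde P}_f}=-\tfrac12\lorsca{P_f-\tilde P_f,P_f-\tilde P_f}$, it is precisely this sign that decides whether $n=\tilde P_f-P_f$ is timelike or spacelike, and the signature of $V$ or of $V^\perp$ cannot detect it. Correspondingly, a plane of signature $\si{++-}$ in $\RP^4$ need not represent a spacelike circle: it can equally represent a circle lying in a timelike plane whose sphere pencil also contains two null-spheres, now with spacelike-separated apices, so the inference ``$V$ has signature $\si{++-}$, hence $V\cap\mob$ is a spacelike circle with timelike axis'' is unjustified.

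That this is not merely presentational is shown by an explicit local configuration in $\lor$: take the four timelike spheres of squared radius $1$ centered, in cyclic order, at $(1,0,1)$, $(-1,0,-1)$, $(1,0,-1)$, $(-1,0,1)$, with signed radii $+1,+1,-1,-1$. One checks that consecutive spheres are in oriented contact (the squared Lorentz distance of consecutive centers equals the squared difference of signed radii), the diagonal pairs are not, and all four spheres pass through exactly the two points $(0,\pm 1,0)$; yet $\tilde P_f-P_f=(0,2,0)$ is spacelike, so the four centers lie in the \emph{timelike} plane $x_2=0$. Thus the hypothesis ``two common intersection points'' alone does not force the center plane to be spacelike, and your argument cannot close this; one needs either to strengthen the hypothesis (the two points should be at timelike distance, equivalently the circle through the four contact points should be spacelike) or to import this information from elsewhere. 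For comparison, the paper's proof consists solely of the observation that a sphere through $P_f$ is in oriented contact with the null-sphere centered at $P_f$ — it yields the extension to a null congruence and is silent on spacelikeness — so your proposal is more ambitious than the published argument, but the additional step is exactly the one that fails.
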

\begin{proof}
	If an oriented sphere contains $P_f$ (resp.~$\tilde P_f$) then it is also in oriented contact with the null-sphere centered at $P_f$ ($\tilde P_f$).
\end{proof}

Of course, as discussed above if there exists one isotropic congruence $\cc$ that restricts to $\ccw$ then there is a second isotropic congruence $\miq{\cc}$ that also restricts to $\ccw$.

Consider an isothermic congruence $\cc$ and the cycle pattern $\lcp$ that is the projection of $\cc$. Since $\cc$ is the special case of a null congruence, $\lcp$ will be the special case of an incircular cycle pattern.  As before, consider a black vertex $b$, the four adjacent white vertices $w_1,w_2,w_3,w_4$ and let $f_{i,i+1}$ be the face adjacent to $w_i$ and $w_{i+1}$. The oriented line $\lcpf(f_{12})$ is a tangent to both $\lcp(w_1)$ and $\lcp(w_2)$, and is also the projection of $\ccf(f_{12})$. Let $\miq{\lcp}$ be the projection of $\miq{\cc}$. Then $\miq{\lcpf}(f_{12})$ is also a common tangent of $\lcp(w_1)$ and $\lcp(w_2)$, in fact it is \emph{the} other (oriented) tangent of the two oriented circles. And since the four isotropic lines $\miq\cc(f_{12})$, $\miq\cc(f_{23})$, $\miq\cc(f_{34})$ and $\miq\cc(f_{41})$ intersect in a point, so do the four other tangents $\miq\lcp(f_{12})$, $\miq\lcp(f_{23})$, $\miq\lcp(f_{34})$ and $\miq\lcp(f_{41})$. As a result, $\miq\lcp$ is also an incircular cycle pattern! Let us formalize this.

\begin{definition}
	An \emph{isothermic cycle pattern} is an incircular cycle pattern $\lcp$ such that $\miq\lcp$ is also an incircular cycle pattern. 
\end{definition}

As a consequence of the discussion above, we obtain the following corollary.

\begin{corollary}
	Every isothermic cycle pattern lifts to an isothermic congruence, and every isothermic congruence projects to an isothermic cycle pattern.
\end{corollary}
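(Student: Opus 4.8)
The plan is to prove both implications by exploiting the dictionary, already assembled in the discussion above, between incircularity of a cycle pattern and the null-sphere condition on the associated congruence, together with the characterization of isothermic congruences in Lemma~\ref{lem:twopointcharacterization}. The two statements are converse to each other under the projection of Theorem~\ref{th:cctolcp} and its inverse, the Lorentz lift, so it suffices to track how the two incircularity hypotheses on $\lcp$ and $\miq\lcp$ translate into the null-sphere conditions on $\cc$ and $\miq{\cc}$.

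First I would treat the implication that every isothermic congruence $\cc$ projects to an isothermic cycle pattern. Theorem~\ref{th:cctolcp} produces the projected cycle pattern $\lcp$, and since an isothermic congruence is in particular a null congruence, $\lcp$ is an incircular cycle pattern. It then remains to see that $\miq\lcp$ is incircular as well, and here I would invoke the Möbius-geometric analysis preceding the statement: because $\ccmw$ is a conjugate net, the four centers $\ccm(w_i)$ around each black vertex $b$ are coplanar, so the polar line $L(b)$ of the orthogonal circle $C(b)$ meets the Lorentz Möbius quadric in two points, both of which are null-spheres, namely $\ccb(b)$ and $\miq{\ccb}(b)$. Thus $\miq{\cc}$ is again a null congruence, and projecting it via Theorem~\ref{th:cctolcp}---the projection being compatible with Miquel dynamics, as explained in Section~\ref{sec:miquel}---yields exactly $\miq\lcp$, which is therefore incircular. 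Hence $\lcp$ is isothermic.

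For the first implication I would run the argument in reverse. Starting from an isothermic cycle pattern $\lcp$, its Lorentz lift $\lorlift{\lcp}$ with the preferred initial isotropic line is a null congruence $\cc$, because $\lcp$ is incircular; this makes every black sphere $\ccb(b)$ a null-sphere. The extra hypothesis that $\miq\lcp$ is also incircular translates, again through Theorem~\ref{th:cctolcp} and the compatibility of projection with Miquel dynamics, into the statement that the black spheres $\miq{\ccb}(b)$ of $\miq{\cc}$ project to circles of radius zero, hence are themselves null-spheres. Consequently the four white spheres $\ccw(w_1),\dots,\ccw(w_4)$ around each black vertex $b$ are in oriented contact with two distinct common null-spheres, i.e.\ they meet in two points. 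By Lemma~\ref{lem:twopointcharacterization} this is precisely the condition guaranteeing that $\cc$ is an isothermic congruence, equivalently that $\ccmw$ is a conjugate net.

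I expect the main obstacle to be pinning down the single equivalence used in both directions: that a black sphere of the Miquel-dual congruence is a null-sphere if and only if the corresponding black circle of the projected Miquel-dual cycle pattern has radius zero. This rests on verifying that the projection of Theorem~\ref{th:cctolcp} intertwines Miquel dynamics on contact congruences with Miquel dynamics on cycle patterns, and that the smallest circle $\smallc(S)$ degenerates to a point exactly when $S$ is a null-sphere. Both points are geometric bookkeeping rather than deep, but they must be stated carefully so that the phrases ``two common null-spheres'' and ``two intersection points'' line up with the hypotheses of Lemma~\ref{lem:twopointcharacterization}.
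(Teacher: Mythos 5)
Your argument is correct and follows essentially the same route as the paper: the corollary is stated there as a direct consequence of the preceding discussion, which sets up exactly the dictionary you use (the polar line $L(b)$ meeting the Möbius quadric in the two null-spheres $\ccb(b)$ and $\miq{\ccb}(b)$, the compatibility of the projection of Theorem~\ref{th:cctolcp} with Miquel dynamics, and Lemma~\ref{lem:twopointcharacterization} for the lifting direction). You are somewhat more explicit than the paper about the lift of an isothermic cycle pattern, which it leaves implicit; the only fine point is that Lemma~\ref{lem:twopointcharacterization} literally asserts the existence of \emph{some} isothermic congruence restricting to $\ccw$, but since isothermicity is a condition on $\ccmw$ alone, this transfers to your specific lift $\cc$.
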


As discussed in Section~\ref{sec:laguerrecp}, incircular cycle patterns correspond to incircular nets, which in turn correspond to circle packings. Analogous to the preceding discussion, we consider a circle packing $\cp$ to be an isothermic circle packing if $\miq{\cp}$ is also a circle packing (where we defined $\miq{\cp}$ as replacing all circles at black vertices by Miquel dynamics). 

Note that in an incircular net $\cpmb$, two adjacent incircles have a common tangent. This tangent may be of ``outer type'', if it does not intersect the segment joining the centers of the two incircles, or otherwise of ``inner type''. An incircular net corresponds to an isothermic cycle pattern if and only if the other tangents of the same type also form an incircular net. It is an advantage of the formulation in terms of cycle patterns that there is no need to discuss the different types of tangents, since this is automatically encoded in the orientations of the lines and oriented circles.

\begin{remark}
	If $\cp$ is a circle packing, then we defined $\miq{\cp}$ as Miquel dynamics applied to all circles at black vertices. On the other hand, if we apply Miquel dynamics to all circles at white vertices, the new circle pattern is equal to $\cp$ since the black circles are touching (see also \cite{klrr}). If $\cp$ is also an isothermic circle packing, then $\miq{\cp}$ is also a circle packing. In general, when iterating Miquel dynamics one alternates between replacing black and white circles. As a consequence, the Miquel dynamics sequence of an isothermic circle pattern $\cp$ is
	\begin{align}
		\dots \rightarrow \cp  \rightarrow \cp \rightarrow \miq{\cp} \rightarrow \miq{\cp} \rightarrow \cp \rightarrow \cp  \rightarrow \miq{\cp} \rightarrow  \miq{\cp} \rightarrow \dots \label{eq:miqsequence}
	\end{align}
	Therefore the sequence is a special case of a four periodic Miquel dynamics sequence.

  Moreover, it is not hard to see that Miquel dynamics induces the same type of sequence on $\lorlift{\cp}$, that is on an isothermic congruence. Additionally, note that it is only $\ccb$ that is changing, while $\ccw$ is fixed. Therefore, what we consider to be the actual discrete surface $\ccmw$, is \emph{constant} under Miquel dynamics. An interesting question is therefore: does Miquel dynamics discretize a continuous surface flow that fixes isothermic surfaces?
\end{remark}

\section{S-isothermic nets} \label{sec:isothermicnets}

Recall that in Definition~\ref{def:sisothermic} (following \cite{bhsminimal}) we defined an S-isothermic net $\iso$ as a collection of maps $\isow$, $\isob$, $\isof$ such that
\begin{enumerate}
	\item $\isow: \Zw^2 \rightarrow \osp_-$ maps to oriented timelike spheres,
	\item $\isob: \Zb^2 \rightarrow \ci_+$ maps to spacelike circles,
	\item $\isof: F(\Z^2) \rightarrow \lor$ maps to points,
\end{enumerate}
such that for each face the corresponding spheres of $\isow$ are in oriented contact and intersect the incident circles of $\isob$ orthogonally in the points of $\isof$.

\begin{remark}
	There are generalizations of S-isothermic nets \cite{bsisothermicmoutard,bsddgbook} to non-touching cases. In the touching cases, the way the orientations of the spheres are introduced depends on the reference. In \cite{bsisothermicmoutard,bsddgbook}, the orientations are chosen such that a specific T-net lift exists. On the other hand in \cite{bhsminimal}, orientations are not mentioned. However, \cite{bhsminimal} require that the touching points around each circle are in counterclockwise order, which implies that an appropriate orientation as in Definition~\ref{def:sisothermic} exists.
\end{remark}

Let us prove the correspondence between S-isothermic nets and isothermic congruences stated in Theorem~\ref{th:isocongruencetoisonet}. 

\begin{proof}[Proof of Theorem~\ref{th:isocongruencetoisonet}]
	Let $\cc$ be an isothermic congruence. In Section~\ref{sec:isothermiccongruences}, we introduced a circle $C(b)$ for each black vertex $b\in \Zb^2$. The circle $C(b)$ intersects each adjacent sphere orthogonally, therefore setting $\isob(b) = C(b)$ and $\isow(w) = \cc(w)$ for every black vertex $b\in \Zb^2$ and white vertex $w\in \Zw^2$ yields an S-isothermic net. 
	
  Conversely, let $\iso$ be an S-isothermic net and let us construct an isothermic congruence $\cc$. We set $\ccw = \isow$. Now, consider the situation in a face $f$, where we know the two oriented timelike spheres $\ccw(w)$, $\ccw(w')$ and the two circles $\isob(b)$, $\isob(b')$ as well as the touching point $\isof(f)$. The two axes of the circles are contained in the common tangent plane of the two spheres, because they intersect the two timelike spheres orthogonally in the touching point. Next, we choose $\ccf(f)$ as one of the two oriented isotropic lines in the common tangent plane of the spheres. This line intersects each of the axes of the two circles in a point, and we choose these points to be $\ccmb(b)$ and $\ccmb(b')$, which also defines the null-spheres $\ccb(b)$, $\ccb(b')$. 
  By construction, these null-spheres contain $\ccf(f)$ (and $\isob(b)$), as required. 
  
  Thus, we can construct $\cc$ in each face up to the choice of one of the two isotropic lines. It remains to show we can make these choices consistently. To see this, note that on each circle axis there are only two points that define a null-sphere which contains $\isob(b)$. These two points correspond to the choice of isotropic line. As a result, if we choose the isotropic line in some initial face $f_0$, the choice of isotropic line in each adjacent face is determined. Moreover, around each white vertex $w$ the resulting four isotropic lines are of alternating type in $\ccw(w)$, which shows that the choices for $\ccf$ and therefore for $\ccb$ are automatically consistent.
%
%
\end{proof}

Note that the other choice of $\ccf(f_0)$ just leads us to the construction of $\miq{\cc}$ instead of $\cc$.

Let us now turn towards dualization of S-isothermic nets. The K{\oe}nigs dual of a planar quad $P_1,P_2,P_3,P_4 \in \R^n$ is, up to scaling and translation, the quad $P^*_1,P^*_2,P^*_3,P^*_4 \in \R^n$ such that the four sides are parallel and such that the diagonals satisfy
\begin{align}
	&P^*_1 \vee P^*_3 \parallel P_2 \vee P_4, & &P^*_2 \vee P^*_4 \parallel P_1 \vee P_3,
\end{align}
that is, opposite diagonals are parallel.

For a general conjugate net $a$ (which consists of planar quads, Definition~\ref{def:conjugatenet}) it is in general not possible to find a so-called K{\oe}nigs dual conjugate net $a^*$, that is a net $a^*$ such that each quad in $a^*$ is K{\oe}nigs dual to the corresponding quad in $a$. 

\begin{definition}
	A conjugate net $a$ is called a \emph{K{\oe}nigs net} if it admits a K{\oe}nigs dual $a^*$ \cite{bskoenigs}.
\end{definition}

Note that if $a^*$ is a K{\oe}nigs dual of $a$, then $a$ is also a K{\oe}nigs of $a^*$. Therefore if $a$ is a K{\oe}nigs net so is $a^*$.

The following was shown in \cite{bhsminimal}.

\begin{lemma}\label{lem:koenigs}
	If $\iso$ is an S-isothermic net, then $\isomw$ is a K{\oe}nigs net.
\end{lemma}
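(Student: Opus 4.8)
The plan is to verify the two ingredients of the Koenigs property separately: first that $\isomw$ is a conjugate net, and then that it admits a globally consistent Koenigs dual in the sense defined above.

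For the conjugate-net property I would argue locally at each black vertex $b\in\Zb^2$, whose four incident white vertices $w_1,w_2,w_3,w_4$ form the quad of $\isomw$ dual to $b$. By Definition~\ref{def:sisothermic} each sphere $\isow(w_i)$ meets the spacelike circle $\isob(b)$ orthogonally. Using the M\"obius model (Section~\ref{sec:conformallorentzgeometry}), the two null-spheres $S_\pm$ containing $\isob(b)$ have apexes $a_\pm=\centersOf{S_\pm}$ on the timelike axis of the circle, and every sphere orthogonal to $\isob(b)$ passes through both $a_+$ and $a_-$. Hence each white center $\isomw(w_i)$ is Lorentz-equidistant from $a_+$ and $a_-$, i.e.\ it lies on the fixed affine plane $\{x\mid \lorsca{x,a_+-a_-}=\tfrac12(\lorsca{a_+,a_+}-\lorsca{a_-,a_-})\}$, which is spacelike since $a_+-a_-$ is timelike. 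The four white centers around $b$ are therefore coplanar, so $\isomw$ is a conjugate net. (This is also the content of the isothermic-congruence discussion feeding Theorem~\ref{th:isocongruencetoisonet}.)

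For the Koenigs dual I would take the Christoffel dual $\isoc^*$ of the combined net $\isoc=\isom\cup\isof$ of Equation~\eqref{eq:combinediso}, whose defining form is closed (established in Section~\ref{sec:mainresults}), and restrict it to the white vertices. The parallel-edge condition is then automatic: for a white edge $w_iw_{i+1}$ the contact point $\isof(f)$ lies on the line $\isomw(w_i)\vee\isomw(w_{i+1})$, because two touching timelike spheres meet at a point of the line joining their centers (Section~\ref{sec:basiclorentzgeometry}). Thus $\isomw(w_i)$, $\isof(f)$, $\isomw(w_{i+1})$ are collinear, and since every edge of $\isoc^*$ is a scalar multiple of the corresponding edge of $\isoc$, the vector $\isoc^*(w_{i+1})-\isoc^*(w_i)$ — the sum of the two collinear dual sub-edges meeting at $\isoc^*(f)$ — is again a multiple of $\isomw(w_{i+1})-\isomw(w_i)$.

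The hard part is the remaining parallel-\emph{diagonal} condition, and this is where the genuine S-isothermic geometry must enter: the white diagonals $\isomw(w_1)\vee\isomw(w_3)$ and $\isomw(w_2)\vee\isomw(w_4)$ carry no distinguished collinear point, so the condition does not follow formally from closedness and parallel edges alone (indeed, for a single planar quad a closing parallel-edge dual need not be Koenigs). I would handle it by making the dual explicit through the radii: the contact point divides the white edge in the ratio $r_{w_i}:r_{w_{i+1}}$ of the sphere radii, so the restricted dual is $\d c^*(w,w')=\pm (r_w r_{w'})^{-1}\big(\isomw(w')-\isomw(w)\big)$ with the alternating sign inherited from the combined-net dual. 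By the characterization of Koenigs nets through the existence of such a reciprocal dual \cite{bskoenigs}, it then suffices to check that this form closes around each white face, i.e.\ around each black vertex $b$. This closedness is exactly a reformulation of the fact that the four contact points $\isof(f_{12}),\isof(f_{23}),\isof(f_{34}),\isof(f_{41})$ are concyclic — they lie on $\isob(b)$, the touching-coins circle \cite{bhsminimal} — and that concyclicity is guaranteed by Definition~\ref{def:sisothermic}. The main obstacle is therefore the algebraic identity turning ``four contact points with radius-ratio divisions lying on a common circle'' into the closedness (equivalently Koenigs) relation; once it is in place the per-face duals glue, because the underlying form on $\isoc$ is globally closed, and $\isomw$ is a Koenigs net.
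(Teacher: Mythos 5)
Your reduction is set up correctly, but the step you label ``the main obstacle'' is the entire content of the lemma, and you do not prove it. Concretely: for a single planar quad the space of closed reciprocal-parallel edge forms is two-dimensional (it contains the identity dual, whose diagonals are parallel to the \emph{same} diagonals, not the opposite ones), so closedness of your form $\d c^*(w,w')=\pm (r_w r_{w'})^{-1}\bigl(\isomw(w')-\isomw(w)\bigr)$ around each black vertex is \emph{not} ``equivalently Koenigs'' as you write --- the swapped-diagonal condition is an independent property that singles out one ray of coefficients among all closing ones. You therefore need to verify both that this particular form closes and that the resulting dual quad has its diagonals parallel to the opposite diagonals of the primal quad, and you establish neither; you only assert that they should follow from the concyclicity of the four contact points. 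Until that identity is actually carried out, the proof is incomplete. (Your first paragraph, showing $\isomw$ is a conjugate net via the two apexes $a_\pm$ of the null-spheres through $\isob(b)$, is correct and is genuinely needed; your explicit dual coefficients also agree with the formula $\d \isomw^*(w,w') = \pm (R^{-1}(w)+R^{-1}(w'))\,\d\isomw(w,w')/|\d\isomw(w,w')|$ derived later in Section~\ref{sec:isothermicnets}.)

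For comparison, the paper's proof is a one-line citation: a conjugate net admitting a family of pairwise touching inscribed conics is a K{\oe}nigs net \cite{bfkoenigs}. The circles $\isob(b)$ are exactly such a family for $\isomw$: each is inscribed in the planar quad around $b$, touching the side $\isomw(w)\vee\isomw(w')$ at $\isof(f)$, and two circles of adjacent quads touch at the shared contact point on the common edge. If you prefer a self-contained argument over the citation, the cleanest completion of your route is via the combined net $\isoc$: each of its quads $\isomw(w),\isof(f),\isomb(b),\isof(f')$ has two right angles and side lengths $R,R,r,r$, hence is a harmonic quad, so the Christoffel dual form on $\isoc$ is closed; one must then still compute explicitly that restricting this dual to $\Zw^2$ yields swapped-parallel diagonals on each white quad. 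That computation is precisely the identity you postponed.
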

\begin{proof}
	A conjugate net that admits a family of pairwise touching inscribed conics is a K{\oe}nigs net \cite{bfkoenigs}.
\end{proof}

Due to the identification of Theorem~\ref{th:isocongruencetoisonet}, if $\cc$ is an isothermic congruence then $\ccmw$ is a K{\oe}nigs net as well.

\begin{remark}
	The K{\oe}nigs net property is invariant under parallel projections, since dualization is preserved. As a consequence, if $\cp$ is an isothermic circle packing, then $\cpmw$ is a K{\oe}nigs net, since $\cpmw$ is the projection of the center net $\ccmw$ of an isothermic congruence.
\end{remark}

The center net $\ccmw$ of an isothermic congruence has a K{\oe}nigs dual $\ccmw^*$, however it is not immediately clear that $\ccmw^*$ belongs to an isothermic congruence as well. To show this, let us borrow another technique from \cite{bhsminimal}.

We call a planar quad $P_1,P_2,P_3,P_4\in \lor$ a \emph{harmonic quad} if the four points are on a circle in non-intersecting cyclic order and if
\begin{align}
	|P_1-P_2||P_3-P_4| = |P_2-P_3||P_4-P_1|.
\end{align}
We call a conjugate net a \emph{harmonic quad net} if every quad is a harmonic quad. Harmonic quad nets are generally known as \emph{isothermic nets} \cite{bpisosurf,bsddgbook}, but we avoid this name here because there is the risk of confusion with S-isothermic nets and isothermic congruences.

Let $a: \Z^2 \rightarrow \lor$ be a harmonic quad net. We use $\d a$ to denote the \emph{(discrete) differential} of $a$, that is,
\begin{align}
	\d a(v,v') = a(v') - a(v),
\end{align}
for all adjacent vertices $v,v'\in \Z^2$. Consider the following \emph{dual differential}
\begin{align}
	\d a^*(v,v') = \pm \frac{\d a(v,v')}{|\d a(v,v')|^2},
\end{align}
where the sign is $+$ if the edge is horizontal in $\Z^2$ and $-$ otherwise.

It turns out that $\d a^*$ is closed \cite{bpisosurf}, that is, $a^*$ is well-defined (up to translation). The map $a^*$ is a harmonic quad net as well and is called the \emph{Christoffel dual} of $a$ (see Figure~\ref{fig:christoffeldual}). Moreover, $a^*$ is also the K{\oe}nigs dual of $a$, therefore $a$ and $a^*$ are also K{\oe}nigs nets.

Let $\iso$ be an S-isothermic net, and consider the combined map $\isoc$ (see Equation~\eqref{eq:combinediso}) that is the combination of $\isom$ and $\isof$. Then $\isoc$ is a harmonic quad net. This is not hard to see since the quads of $\isoc$ have two right angles and side lengths $R,R,r,r$, where $R$, $r$ are the radii of a sphere of $\isow$ and an adjacent circle of $\isob$ respectively. Therefore every S-isothermic net $\iso$ comes with a Christoffel dual S-isothermic net $\iso^*$.

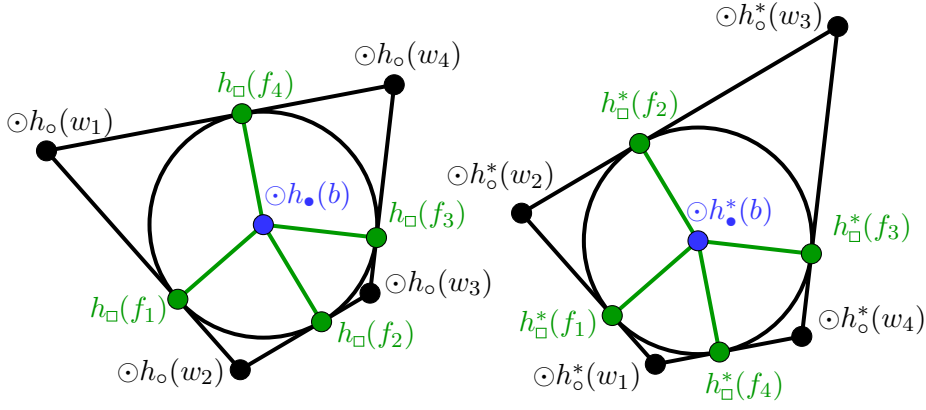
\begin{figure}
	\centering
	\begin{tikzpicture}[line cap=round,line join=round,>=triangle 45,x=1.0cm,y=1.0cm,scale=1.5]
		\definecolor{qqzzqq}{rgb}{0.,0.6,0.}
		\definecolor{ttttff}{rgb}{0.2,0.2,1.}
		\clip(-6.49007331469685,-1.6167843497060181) rectangle (1.6,2.0010235341768725);
		\draw [line width=1.5pt] (-4.24,0.02) circle (0.9987414173774751cm);
		\draw [line width=1.5pt] (-4.44273807014855,-1.2625517400125128)-- (-3.3044103136325047,-0.5856276320078151);
		\draw [line width=1.5pt] (-3.3044103136325047,-0.5856276320078151)-- (-3.0921841347984063,1.2569796075931026);
		\draw [line width=1.5pt] (-3.0921841347984063,1.2569796075931026)-- (-6.142269373424083,0.6722414186572291);
		\draw [line width=1.5pt] (-6.142269373424083,0.6722414186572291)-- (-4.44273807014855,-1.2625517400125128);
		\draw [line width=1.5pt] (-0.42746692907739503,-0.12274082086700801) circle (1.001260168648888cm);
		\draw [line width=1.5pt] (-1.9750422636041638,0.12189058615006641)-- (-0.8016207773583474,-1.213964693154792);
		\draw [line width=1.5pt] (-0.8016207773583474,-1.213964693154792)-- (0.4830962232332596,-0.967668923217702);
		\draw [line width=1.5pt] (0.4830962232332596,-0.967668923217702)-- (0.7985574922498841,1.771254339204683);
		\draw [line width=1.5pt] (0.7985574922498841,1.771254339204683)-- (-1.9750422636041638,0.12189058615006641);
		\draw [line width=1.5pt,color=qqzzqq] (-4.428046295555406,1.0008786925573707)-- (-4.24,0.02);
		\draw [line width=1.5pt,color=qqzzqq] (-4.24,0.02)-- (-4.990362066814596,-0.6391215270880599);
		\draw [line width=1.5pt,color=qqzzqq] (-3.7295229457086774,-0.8384273969458466)-- (-4.24,0.02);
		\draw [line width=1.5pt,color=qqzzqq] (-4.24,0.02)-- (-3.2478179286600612,-0.09427666470787437);
		\draw [line width=1.5pt,color=qqzzqq] (-1.1797213529895585,-0.7835246032250336)-- (-0.42746692907739503,-0.12274082086700801);
		\draw [line width=1.5pt,color=qqzzqq] (-0.42746692907739503,-0.12274082086700801)-- (-0.9392313683786432,0.7378514658689811);
		\draw [line width=1.5pt,color=qqzzqq] (-0.42746692907739503,-0.12274082086700801)-- (0.5672173513530837,-0.23730568278941422);
		\draw [line width=1.5pt,color=qqzzqq] (-0.42746692907739503,-0.12274082086700801)-- (-0.23894639480736937,-1.1060932162377335);
		\begin{normalsize}
		\draw [fill=ttttff] (-4.24,0.02) circle (2.5pt);
		\draw[color=ttttff] (-3.85,0.2785518624988409) node {$\isomb(b)$};
		\draw [fill=qqzzqq] (-4.428046295555406,1.0008786925573707) circle (2.5pt);
		\draw[color=qqzzqq] (-4.304571623750529,1.2540017697573678) node {$\isof(f_4)$};
		\draw [fill=qqzzqq] (-4.990362066814596,-0.6391215270880599) circle (2.5pt);
		\draw[color=qqzzqq] (-5.418284195106618,-0.733940446301149) node {$\isof(f_1)$};
		\draw [fill=qqzzqq] (-3.7295229457086774,-0.8384273969458466) circle (2.5pt);
		\draw[color=qqzzqq] (-3.2513761257407803,-0.9561948555499271) node {$\isof(f_2)$};
		\draw [fill=qqzzqq] (-3.2478179286600612,-0.09427666470787437) circle (2.5pt);
		\draw[color=qqzzqq] (-2.8,0.10568732197201337) node {$\isof(f_3)$};
		\draw [fill=black] (-6.142269373424083,0.6722414186572291) circle (2.5pt);
		\draw[color=black] (-6.0208695618383175,0.9206201558842003) node {$\isomw(w_1)$};
		\draw [fill=black] (-3.0921841347984063,1.2569796075931026) circle (2.5pt);
		\draw[color=black] (-2.971045168257859,1.5132985805476091) node {$\isomw(w_4)$};
		\draw [fill=black] (-4.44273807014855,-1.2625517400125128) circle (2.5pt);
		\draw[color=black] (-5.05,-1.2895764694230947) node {$\isomw(w_2)$};
		\draw [fill=black] (-3.3044103136325047,-0.5856276320078151) circle (2.5pt);
		\draw[color=black] (-2.7,-0.5) node {$\isomw(w_3)$};
		\draw [fill=ttttff] (-0.42746692907739503,-0.12274082086700801) circle (2.5pt);
		\draw[color=ttttff] (-0.15,0.13038225633298875) node {$\isomb^*(b)$};
		\draw [fill=qqzzqq] (0.5672173513530837,-0.23730568278941422) circle (2.5pt);
		\draw[color=qqzzqq] (1.1,-0.005439882652375758) node {$\isof^*(f_3)$};
		\draw [fill=qqzzqq] (-0.23894639480736937,-1.1060932162377335) circle (2.5pt);
		\draw[color=qqzzqq] (-0.0200005121213013,-1.4) node {$\isof^*(f_4)$};
		\draw [fill=qqzzqq] (-0.9392313683786432,0.7378514658689811) circle (2.5pt);
		\draw[color=qqzzqq] (-0.9337130834773901,1.0811372292305401) node {$\isof^*(f_2)$};
		\draw [fill=qqzzqq] (-1.1797213529895585,-0.7835246032250336) circle (2.5pt);
		\draw[color=qqzzqq] (-1.65,-0.8574151181060258) node {$\isof^*(f_1)$};
		\draw [fill=black] (-1.9750422636041638,0.12189058615006641) circle (2.5pt);
		\draw[color=black] (-2.1561123343456714,0.45) node {$\isomw^*(w_2)$};
		\draw [fill=black] (-0.8016207773583474,-1.213964693154792) circle (2.5pt);
		\draw[color=black] (-1.4,-1.3266188709645577) node {$\isomw^*(w_1)$};
		\draw [fill=black] (0.4830962232332596,-0.967668923217702) circle (2.5pt);
		\draw[color=black] (1.1,-0.8203727165645627) node {$\isomw^*(w_4)$};
		\draw [fill=black] (0.7985574922498841,1.771254339204683) circle (2.5pt);
		\draw[color=black] (0.2,1.8590276616012642) node {$\isomw^*(w_3)$};
		\end{normalsize}
	\end{tikzpicture}
	\caption{Local Christoffel dual $\iso^*$ of an S-isothermic net $\iso$. All corresponding edges (black and green) are parallel.}
	\label{fig:christoffeldual}
\end{figure}

Moreover, because corresponding edges of $\isoc$ and $\isoc^*$ are parallel, it follows that for any two white vertices $w,w'$ adjacent to a common face holds
\begin{align}
	\d \isomw^*(w,w') = \pm (R^{-1}(w) + R^{-1}(w')) \frac{\d \isomw(w,w')}{|\d \isomw(w,w')|}.
\end{align}
This provides us with a direct way to compute the Christoffel dual $\isomw^*$ without using $\isoc$.

\begin{remark}
	Note that for an S-isothermic net $\iso$ the K{\oe}nigs dual of $\isomw$ coincides with $\isomw^*$ obtained as the restriction of the Christoffel dual $\isoc^*$. 
\end{remark}

Due to the identification of Theorem~\ref{th:isocongruencetoisonet}, we also get a Christoffel dual $\cc^*$ of every isothermic
 congruence $\cc$. Since that identification is 2:1 from S-isothermic nets to contact congruences, we make the construction of the Christoffel dual $\cc^*$ more specific.

Let $\cc$ be an isothermic congruence and consider the combined map
\begin{align}
	\ccc = \ccm \cup \isof: \Z^2 \simeq V(\Z^2) \cup F(\Z^2) \rightarrow \lor, \label{eq:combinedcc}
\end{align}	
that is the combination of $\ccm$ and $\isof$. In comparison with $\isoc$ we substituted $\isomb$ with $\ccmb$, therefore $\ccc$ is not a conjugate net. However, for a face $f$ and an adjacent white vertex $w$ or black vertex $b$ it is possible to integrate 
\begin{align}
	\d \ccc^*(f,w) &= \pm \frac{\d \ccc(f,w)}{R(w)^2}, &
	\d \ccc^*(f,b) &= \pm \frac{\d \ccc(f,b)}{r(b)^2}, \label{eq:nullspheredual}
\end{align}
where $r(b)$ is the radius of the incircle of the corresponding quad of $\ccmw$ at $b$. The difference to the formula for $\d \isoc^*$ is that we cannot divide by $|\d \ccc(f,b)|$, since that length is zero. Instead, this formula works since the triangle $\isof(f),\isomb(b),\ccmb(b)$ is a right-angled triangle and 
\begin{align}
	|\isof(f) - \isomb(b)|^2 = - |\isomb(b) - \ccmb(b)|^2.
\end{align}
The discrete differential $\d \ccc^*$ is consistent in the sense that the integrated net $\ccc^*$ is indeed the combination of $\ccm^*$ and $\isof^*$.



\section{$X$-variables}  \label{sec:xvariables}

In this section we study the $X$-variables, that is, the quantities that can be read off conical nets that define the corresponding dimer model in statistical mechanics, see \cite{amiquel,klrr,clrdimer}.

\subsection{Definitions} \label{sec:xdef}

\begin{definition}\label{def:xvar}
	Let us identify $\R^2 \simeq \C$ and consider a conical net $\cpm: \Z^2 \rightarrow \C$. The $X$-variables $X: \Z^2\rightarrow \R$ are defined as
	\begin{align}	
		X(v) = -\frac{(\cpm(v + e_1)-\cpm(v))(\cpm(v -e_1)-\cpm(v))}{(\cpm(v + e_2)-\cpm(v))(\cpm(v -e_2)-\cpm(v))},
	\end{align}	
	for all $v\in \Z^2$.
\end{definition}

\begin{remark}
  It is not hard to see that the $X$-variables are indeed real-valued. In a conical net, opposing angles at a vertex $v$ add up to an odd or even multiple of $\pi$. Hence, $X(v)$ is positive or negative respectively.
\end{remark}

The $X$-variables may be expressed in the cyclographic lift $\cyclift{\cpm}$ as well, as discovered by Chelkak \cite{chelkakliftxvars} and expressed in the next lemma.
\begin{lemma} \label{th:xcyclo} \label{lem:xcyclo}
	Let $\cyclift{\cpm}$ be the cyclographic lift of a conical net $\cpm$. The $X$-variables of $\cpm$ satisfy
	\begin{align}\label{eq:xvarLaguerre}
	  X(v) = \frac{\sca{\cyclift{\cpm}(v+e_1)-\cyclift{\cpm}(v -e_1),\cyclift{\cpm}(v + e_1)-\cyclift{\cpm}(v - e_1)}_\cyc}{\sca{\cyclift{\cpm}(v+e_2)-\cyclift{\cpm}(v -e_2),\cyclift{\cpm}(v + e_2)-\cyclift{\cpm}(v - e_2)}_\cyc},
	\end{align}
	for all $v\in \Z^2$.
\end{lemma}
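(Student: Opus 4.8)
The plan is to evaluate both sides in one concrete representative of the cyclographic lift and reduce everything to a planar computation. Recall from Section~\ref{sec:cyclift} that the Chelkak--Laslier--Russkikh map $\cyclift{\cpm}(v) = (\cpm(v), o(v)) \in \R^2\times\R^2 \simeq \cyc$, with $o$ the origami map, is a cyclographic lift; since any two cyclographic lifts of a fixed $\cpm$ differ by an isometry of $\cyc$ (a Laguerre transformation), and isometries preserve $\sca{\cdot,\cdot}_\cyc$, the right-hand side of \eqref{eq:xvarLaguerre} is independent of the chosen lift, so it suffices to use this one. Writing the form as $\sca{(a,b),(a',b')}_\cyc = a\cdot a' - b\cdot b'$ with $\cdot$ the Euclidean product on $\R^2$ and $|\cdot|$ its norm, and abbreviating $D_i = \cyclift{\cpm}(v+e_i) - \cyclift{\cpm}(v-e_i)$, numerator and denominator split as
\begin{align}
\sca{D_i, D_i}_\cyc = |\cpm(v+e_i)-\cpm(v-e_i)|^2 - |o(v+e_i)-o(v-e_i)|^2 .
\end{align}

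Second, I would express the origami differences through the developing isometry of the flat connection $\rho$ (Lemma~\ref{lem:flatConnection}). For a face $f$ incident to both $v$ and a neighbour $v'$ one has $o(v')-o(v) = \vec{\mathscr R}(f)(\cpm(v')-\cpm(v))$, where $\vec{\mathscr R}(f)\in\mathrm{O}(2)$ is the linear part of the developing isometry $\mathscr R(f)$. Developing both half-edges of a diagonal from a \emph{single} star face $f_0$ of $v$ and reaching the faces adjacent to the opposite neighbours through the connection, the two half-edges acquire the \emph{same} orthogonal prefactor $\vec{\mathscr R}(f_0)$ up to one intervening reflection about the transverse star edge. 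With $a_1=\cpm(v+e_1)-\cpm(v)$, $a_3=\cpm(v-e_1)-\cpm(v)$, $a_2=\cpm(v+e_2)-\cpm(v)$, $a_4=\cpm(v-e_2)-\cpm(v)$, and $\vec\rho_i$ the reflection about the line $\cpm(v)\vee\cpm(v+e_i)$, this gives $o(v+e_1)-o(v-e_1)=\vec{\mathscr R}(f_0)(a_1-\vec\rho_2 a_3)$ and $o(v+e_2)-o(v-e_2)=\vec{\mathscr R}(f_0)(a_2-\vec\rho_1 a_4)$. Since $\vec{\mathscr R}(f_0)$ is orthogonal it drops out of the norms, and expanding the two squares yields
\begin{align}
\sca{D_1,D_1}_\cyc = 2\,a_1\cdot(\vec\rho_2-\id)a_3, \qquad \sca{D_2,D_2}_\cyc = 2\,a_2\cdot(\vec\rho_1-\id)a_4 .
\end{align}

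Third, I would pass to $\R^2\simeq\C$. Writing $a_i=|a_i|u_i$ with $|u_i|=1$, each star reflection acts by $z\mapsto u_i^2\bar z$, so $a_1\cdot\vec\rho_2 a_3 = |a_1||a_3|\,\mathrm{Re}(u_2^2/(u_1u_3))$ and $a_1\cdot a_3 = |a_1||a_3|\,\mathrm{Re}(u_3/u_1)$, and symmetrically for the $e_2$-diagonal. Evaluating the product of the four reflections around $v$ turns Lemma~\ref{lem:conicalreflectionid} into the scalar identity $u_1u_3 = \epsilon\,u_2u_4$, where $\epsilon := u_1u_3/(u_2u_4)\in\{\pm1\}$. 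Substituting this reduces the two brackets to $\sca{D_1,D_1}_\cyc = 2|a_1||a_3|(\epsilon\cos2\beta-\cos2\alpha)$ and $\sca{D_2,D_2}_\cyc = 2|a_2||a_4|(\epsilon\cos2\alpha-\cos2\beta)$, with $2\alpha=\arg(u_1/u_3)$ and $2\beta=\arg(u_2/u_4)$. In the quotient the common trigonometric factor cancels for both values of $\epsilon$, leaving $\sca{D_1,D_1}_\cyc/\sca{D_2,D_2}_\cyc = -\epsilon\,|a_1||a_3|/(|a_2||a_4|)$; since $X(v) = -a_1a_3/(a_2a_4)$ has modulus $|a_1||a_3|/(|a_2||a_4|)$ and phase $u_1u_3/(u_2u_4)=\epsilon$, this equals $X(v)$, as claimed.

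The main obstacle is the middle step: organizing the developing map $\mathscr R$ around the vertex star so that the orthogonal prefactor is genuinely shared by the two half-diagonals, which is what forces it to cancel under the norm. Here the flatness of $\rho$ (equivalently the conical condition, Lemma~\ref{lem:conicalreflectionid}) enters twice -- it is what makes $\mathscr R$ well-defined on faces at all, and, evaluated around the star, it is what produces the scalar relation $u_1u_3=\pm u_2u_4$. The remaining care is purely bookkeeping: matching the residual sign $\epsilon$ against the minus sign built into the definition of the $X$-variable, so that the identity holds correctly in both the \emph{even} and \emph{odd} cases of the angle condition noted after Definition~\ref{def:xvar}.
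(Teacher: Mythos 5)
Your proof is correct and follows essentially the same route as the paper: it evaluates the right-hand side on the Chelkak--Laslier--Russkikh representative $(\cpm,o)$ of the cyclographic lift, reduces everything to a planar computation around the vertex star using the edge reflections, and closes with a case split on the sign of $X(v)$ (your $\epsilon=\pm1$ is exactly the paper's dichotomy $\beta=\pi-\alpha$ versus $\beta=-\alpha$). The only addition beyond the paper's argument is your explicit remark that the right-hand side is independent of the chosen lift because all lifts differ by isometries of $\cyc$ -- a point the paper leaves implicit -- and your bookkeeping in cosines of doubled angles rather than sines, which is an equivalent reformulation of the same trigonometric identity.
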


\begin{proof}
	Without loss of generality we assume that $\cpm(v) = 0$. For this proof, we use the map $h: \Z^2 \rightarrow \C^{1,1} \simeq \R^{2,2}$ as defined in Section~\ref{sec:cyclift}, which coincides with $\cyclift p$. By choosing the base-face $f_0$ of the origami map adjacent to $v$,$v+e_1$ and $v+e_2$, we obtain that
	\begin{align}
		o(v+e_1) &= \cpm(v+e_1), & o(v+e_2) &= \cpm(v+e_2),  \\
		o(v-e_1) &= \overline \cpm(v-e_1) \frac{\cpm(v+e_2)}{\overline \cpm(v+e_2)}, &  o(v-e_2) &= \overline \cpm(v-e_2) \frac{\cpm(v+e_1)}{\overline \cpm(v+e_1)}.
	\end{align}
	A short calculation shows that the right-hand side in Equation~\eqref{eq:xvarLaguerre} is equal to
	\begin{align}
		-\frac{|\cpm(v+e_1)|^2}{|\cpm(v+e_2)|^2}\frac{\mathrm{Im} (\cpm(v+e_2) \overline \cpm(v-e_1))  }{\mathrm{Im} (\overline \cpm(v-e_2)\cpm(v+e_1))},
	\end{align}
	Let $\alpha$ be the angle from $\cpm(v+e_2)$ to $\cpm(v-e_1)$, and $\beta$ be the angle from $\cpm(v-e_2)$ to $\cpm(v+e_1)$, then the formula above reads
	\begin{align}
	\frac{|\cpm(v+e_1)||\cpm(v-e_1)|}{|\cpm(v+e_2)||\cpm(v-e_2)|}\frac{\sin(\alpha) }{\sin(\beta)}.
	\end{align}
	If $X(v) > 0$, then $\beta =  \pi - \alpha$ and the claim follows. If $X(v) < 0$, then $\beta = - \alpha$ and the claim follows as well.
\end{proof}

In Section~\ref{sec:laguerre} we explained how (squared) distances in $\cyc$ correspond to (squared) tangential distances in $\lor$. Therefore it is also possible to express the $X$-variables via tangential distances of oriented spheres, as the squared tangential distance of $\lorlift{\cp}(v+e_1)$ to $\lorlift{\cp}(v-e_1)$ divided by the squared tangential distance of $\lorlift{\cp}(v+e_2)$ to $\lorlift{\cp}(v-e_2)$.

\subsection{Ising subvariety}\label{sec:isingsub}

Let us consider the special case that $\cp$ is a circle packing, therefore $\cpmb$ is an incircular net and $\lorlift{\cp}$ is a null congruence.
The tangential distance of two null-spheres is just the Lorentz distance of their centers. As a result, in this case the $X$-variable at a white verticex $w$ may be calculated from the Lorentz lift as
\begin{align}
	X(w) = \frac{|\lorlift{\cpm}(w + e_1) - \lorlift{\cpm}(w - e_1)|^2_\lor}{|\lorlift{\cpm}(w + e_2) - \lorlift{\cpm}(w - e_2)|^2_\lor}. \label{eq:xslorlift}
\end{align}

Moreover, for the conical net of a circle packing, it is known \cite{klrr} that the $X$-variables are in the so-called \emph{Ising subvariety} \cite{kenyonpemantle}. More explicitly, this means that for each black vertex $b$ we obtain
\begin{align}
	X^2(b) &= \frac{(1 + X(b + e_2))(1 + X(b - e_2))}{(1 + X^{-1}(b + e_1))(1 + X^{-1}(b - e_1))}. \label{eq:isingsub}	
\end{align}
Note that the quantities on the right hand side are $X$-variables of white vertices, therefore all black $X$-variables are determined by the white $X$-variables and thus by Equation~\eqref{eq:xslorlift}.

\subsection{Miquel dynamics}

In Section~\ref{sec:miquel}, we explained how Miquel dynamics applied to all black circles of a circle pattern $\cp$ produces a new circle pattern $\miq{\cp}$. Of course, it is also possible to apply Miquel dynamics to all white circles. As we need to distinguish the two possibilities in the following, let us denote the former by $\miqb \cp$ and the latter by $\miqw \cp$.

 Let us denote by $\miqb{X}$ the $X$-variables of $\miqb{\cp}$, and by $\miqw{X}$ the $X$-variables of $\miqw{\cp}$. In \cite{amiquel,klrr} it was shown how $\miqb{X}$ (and $\miqw{X}$) is determined by $X$, albeit with slightly different conventions. More explicitly, for every black vertex $b$ we have 
\begin{align}
	\miqb{X}(b) = X(b). 
\end{align}
Additionally, for every white vertex $w$ we have
\begin{align}
	\miqb{X}(w) = X^{-1}(w) \frac{(1 + X(w + e_2))(1 + X(w - e_2))}{(1 + X^{-1}(w + e_1))(1 + X^{-1}(w - e_1))}.  \label{eq:miqbw}
\end{align}
The formulas for Miquel dynamics applied to white circles are almost the same, they are
\begin{align}
	\miqw{X}(w) &= X(w),\\ 
	\miqw{X}(b) &= X^{-1}(b) \frac{(1 + X(b + e_2))(1 + X(b - e_2))}{(1 + X^{-1}(b + e_1))(1 + X^{-1}(b - e_1))}. \label{eq:miqwb}
\end{align}
Since Miquel dynamics applied to the white circles of a circle packing changes nothing, it is not surprising that Equation~\eqref{eq:isingsub} is the stationary constraint on Equation~\eqref{eq:miqwb}. This observation was also used by George in \cite{georgespectralising}.

\subsection{Isothermic subvariety}\label{sec:subvariety}

Consider an isothermic congruence $\cc$ (Section~\ref{sec:isothermiccongruences}) and the corresponding circle packing $\cp$ via Theorem~\ref{th:nullcctopacking}. Since $\cc$ is also a null congruence, the $X$-variables satisfy the constraint of Equation~\eqref{eq:isingsub}. In particular, this means that the statistics are defined by the restriction of the $X$-variables to the white vertices.

Since statistics are only defined if $X(v) > 0$ for all $v \in \Z^2$, we make this assumption in the following. In particular, this means we can take unique positive square-roots of $X$-variables.

Applying Miquel dynamics to the black circles, we obtain another isothermic congruence $\miqb{\cc}$ and circle packing $\miqb{\cp}$, and the new $X$-variables $\miqb{\cc}$ at white vertices are given by Equation~\eqref{eq:miqbw}, while the variables at black vertices remain unchanged. 

Moreover, since $\miqb{\cc}$ is itself a null-congruence, the variables $\miqb{X}$ also satisfy Equation~\eqref{eq:isingsub}, that is,
\begin{align}
	X^2(b) &= \frac{(1 + \miqb{X}(b + e_2))(1 + \miqb{X}(b - e_2))}{(1 + \miqb{X}^{-1}(b + e_1))(1 + \miqb{X}^{-1}(b - e_1))}. \label{eq:isingsubtwo}	
\end{align}
Note that $\miqb{X}(b + e_1)$ involves the four variables
\begin{align}
	X(b + 2 e_1), \quad X(b), \quad X(b + e_1 + e_2), \quad X(b + e_1 - e_2),
\end{align}
which are all defined on black vertices. For example, using Equation~\eqref{eq:isingsub} the variable $X(b + 2 e_1)$ may be expressed by the four variables
\begin{align}
	X(b + 3 e_1), \quad X(b + e_1), \quad X(b + 2e_1 + e_2), \quad X(b + 2e_1 - e_2),
\end{align}
which are all defined on white vertices. The left hand side of Equation~\eqref{eq:isingsubtwo} can be substituted by the right hand side of Equation~\eqref{eq:isingsub}. Combining all, we obtain an equation involving the 16 variables
\begin{align}
	\{X(b + y) \ | \ y_1 + y_2 \in 2 \Z \mbox{ and } |y_1| + |y_2| \leq 3 \},
\end{align}
all defined at white vertices, see also Figure~\ref{fig:isoconstraint}. We do not give the explicit equation since it is large, unwieldy, and we do not use it.

Let us phrase the previous observations as a theorem.

\begin{figure}
	\centering
	\includegraphics[scale=0.8]{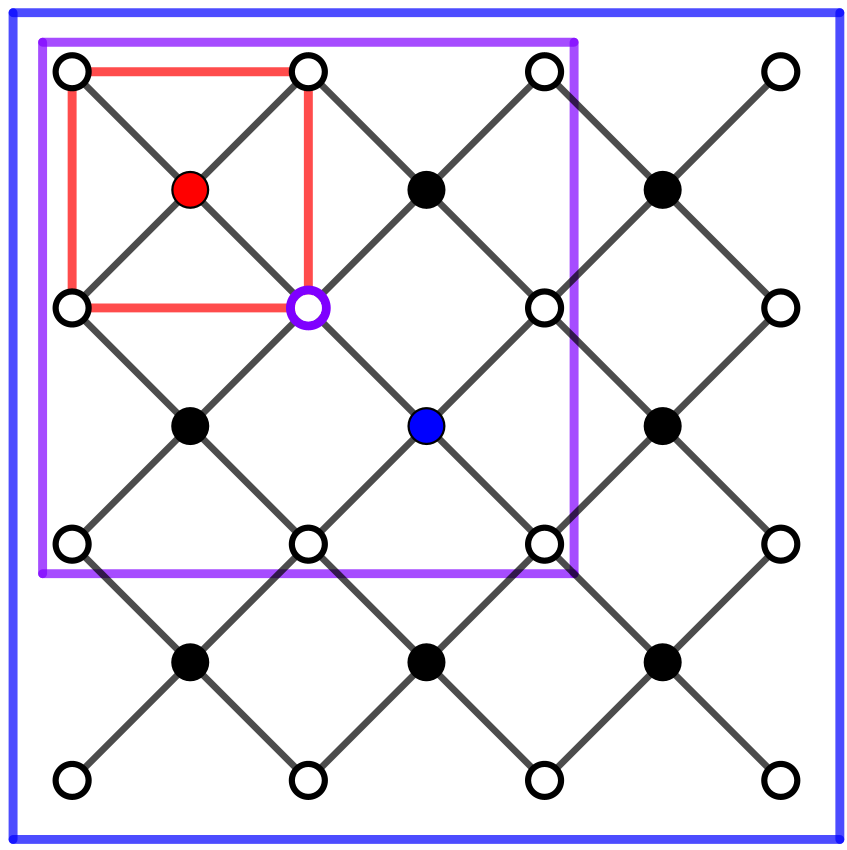}
  \caption{A (diagonal) $4\times4$ patch of $\Z^2$. The red $X_\bullet$ variable is determined by all $X_\circ$ variables in the red box. The violet $\miqb{X_\circ}$ variable is determined by all $X_\circ$ variables in the violet box. All $X_\circ$ variables in the blue box satisfy a constraint, since the blue $X_\bullet$ variable is twice determined.}
	\label{fig:isoconstraint}
\end{figure}

\begin{theorem}
	The $X$-variables of an isothermic congruence are in the subvariety defined by Equations~\eqref{eq:isingsub}, \eqref{eq:miqbw} and \eqref{eq:isingsubtwo}.
\end{theorem}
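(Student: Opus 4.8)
The plan is to exhibit the three defining equations of the subvariety one at a time, realizing each as an instance of the Ising-subvariety relation for a circle packing together with the Miquel-dynamics transformation rule. Since an isothermic congruence is in particular a null congruence, Theorem~\ref{th:nullcctopacking} tells us that the associated circle pattern $\cp$ is a circle packing; hence its $X$-variables lie in the Ising subvariety \cite{klrr,kenyonpemantle}, which is exactly Equation~\eqref{eq:isingsub}. This disposes of the first constraint with no work beyond citing the known result.

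The geometric heart of the argument is the observation, already recorded in Section~\ref{sec:isothermiccongruences} and Lemma~\ref{lem:twopointcharacterization}, that applying Miquel dynamics to all black spheres of an \emph{isothermic} congruence produces a \emph{second} null congruence $\miqb{\cc}$: the hypothesis that $\ccmw$ is a spacelike conjugate net forces the four white spheres around each black vertex $b$ to carry a common orthogonal circle $C(b)$, whose polar line meets the Möbius quadric in a second null-sphere, and this is precisely $\miqb{\cc}(b)$. Since Miquel dynamics at black vertices leaves the white spheres untouched, the white center net $\ccmw$ is unchanged and remains spacelike conjugate, so $\miqb{\cc}$ is again an isothermic congruence and $\miqb{\cp}$ is again a circle packing. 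I would then invoke the Miquel-dynamics formula of \cite{amiquel,klrr}, which determines $\miqb{X}$ from $X$: it is unchanged at black vertices, $\miqb{X}(b)=X(b)$, and given by Equation~\eqref{eq:miqbw} at white vertices. Read as the definition of the auxiliary variables $\miqb{X}$, this is the second constraint.

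Finally, because $\miqb{\cp}$ is itself a circle packing, its variables $\miqb{X}$ again lie in the Ising subvariety, i.e.\ they satisfy Equation~\eqref{eq:isingsub} with $X$ replaced by $\miqb{X}$; using $\miqb{X}(b)=X(b)$ to rewrite the left-hand side as $X^2(b)$ gives exactly Equation~\eqref{eq:isingsubtwo}, the third constraint, and the three equations together cut out the claimed subvariety. The only place where the isothermic hypothesis (rather than merely the null hypothesis) enters is the step asserting that $\miqb{\cc}$ is again a null congruence; for a generic null congruence Miquel dynamics turns the black null-spheres into timelike spheres and Equation~\eqref{eq:isingsubtwo} fails, so this is where the real content lies. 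Everything else is an assembly of the established Ising-subvariety relation and the standard transformation rule, and I expect no difficulty beyond careful bookkeeping of indices.
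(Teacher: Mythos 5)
Your proposal is correct and follows essentially the same route as the paper: Equation~\eqref{eq:isingsub} from the null-congruence/circle-packing property, Equation~\eqref{eq:miqbw} from the known Miquel-dynamics transformation rule, and Equation~\eqref{eq:isingsubtwo} from the fact that $\miqb{\cc}$ is again a null congruence, which is exactly where the paper also uses the isothermic hypothesis (via the second null-sphere on the polar line of the orthogonal circle $C(b)$). Your identification of that last step as the only place the isothermic condition enters matches the paper's reasoning precisely.
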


It is tempting to call this subvariety the \emph{isothermic subvariety}, but it is not completely clear if this is appropriate, due to the following arguments.
While every circle packing has $X$-variables in the Ising subvariety, not every circle pattern with $X$-variables in the Ising subvariety is a circle packing. Therefore, it is not a priori clear whether every null congruence with $X$-variables in the isothermic subvariety is an isothermic congruence.

\begin{remark}
	Note that the constraints relate $X$-variables at white vertices of a (diagonal) $4 \times 4$ patch, see Figure~\ref{fig:isoconstraint}. In particular, this shows that the Cauchy data is one-dimensional, in the sense that we may only prescribe the $X$-variables for ``thickened'' coordinate axes, and then the whole of the $X$-variables is determined by the subvariety equations. This is in contrast to the Ising subvariety, where the Cauchy data is still two-dimensional. Note that isothermic surfaces, both discrete and smooth, are also determined by one-dimensional Cauchy data, so this is not an unexpected phenomenon.
\end{remark}

It would also be very interesting to find some sort of parametrization of the isothermic subvariety, be it with rational or other functions.

\section{Transformation behaviour} \label{sec:transformations}

\subsection{Invariant objects}

We have defined contact congruences as a pair of maps of oriented spheres $\cc$ and oriented isotropic lines $\ccf$ (Definition~\ref{def:contactcongruence}). A consequence is that $\cc$ consists of oriented spheres such that adjacent spheres are in contact. In fact, generically this property is already characterizing: if the oriented spheres of a map $\cc:\Z^2 \rightarrow \osp_-$ are in contact per face, then the isotropic lines $\ccf$ exist and are uniquely determined. Since this characterization is manifestly \emph{Lie invariant} (see Section~\ref{sec:lie}), any Lie transform of a contact congruence is again a contact congruence. Therefore, contact congruences are natural objects of Lie geometry. Note that the Lie transformations form a \emph{larger} group than the isometries of $\cyc \simeq \R^{2,2}$, indeed they are the conformal transformations of $\R^{2,2}$.

Null congruences are the special case of contact congruences where all the black spheres have radius zero. Since Lie transformations generally do not preserve null-spheres, null congruences are \emph{not} invariant under Lie transformations. However, Möbius transformations also preserve sphere contact and map null-spheres to null-spheres. Hence, null congruences are naturally \emph{Möbius invariant}. 

Isothermic congruences correspond to S-isothermic nets, and in \cite{bhsminimal} they already showed that these are also Möbius invariant. This is due to the added constraint on null congruences that the four timelike spheres of white vertices adjacent to a fixed black vertex are orthogonal to a circle, which is a Möbius invariant condition. This is in agreement with the smooth theory, where isothermic surfaces are also Möbius invariant.

\begin{remark}
	Since we have shown that circle patterns $\cp$ and contact congruences $\lorlift{\cp}$ are closely related, one may also ask how the above mentioned transformations act on $\cp$, and conversely how the actions of the various sphere geometries on $\eucl$ are captured by the transformations of $\lor$. For example, every Möbius transformation of $\eucl$ may be (non-uniquely) extended to a Möbius transformation of $\lor$. But in general, a Möbius transformation of $\lor$ does not induce a Möbius transformation on the circle pattern in $\eucl$. Instead, we expect that it acts in some manner as a Lie transformation in $\eucl$. More generally, one may also ask how Lie transformations of $\lor$ act on the circle pattern in $\eucl$. Similar questions may also be asked in the special case of null congruences. Answering these questions is part of ongoing research by the authors and will be treated in a future publication.
\end{remark}

\subsection{Conformal $X$-variables}\label{sec:confxvariables}

Before we discuss the invariance of the $X$-variables in the different sphere geometries, let us relate recent work in \cite{amiquelmobius} to contact congruences. We keep this discussion brief, since it mainly serves to motivate future research and open questions that we discuss in the next subsection.

Let $\cc$ be a contact congruence and recall that $\miq{\cc}$ is the image of $\cc$ under Miquel dynamics, see Section~\ref{sec:miquel}. The spheres $\ccw$ are the same in $\cc$ and $\miq{\cc}$, but the spheres $\ccb$ and the oriented lines $\ccf$ are all different after applying Miquel dynamics. Consider a white vertex $w$ and in cyclic order the four adjacent faces $f_1$, $f_2$, $f_3$, $f_4$. The four isotropic lines $\ccf(f_1)$, $\miq{\ccf}(f_2)$, $\ccf(f_3)$ and $\miq{\ccf}(f_4)$ are contained in $\ccw(w)$ and are all four of the same type. Intersecting these four lines with $\eucl$ yields the four  points used in \cite{amiquelmobius} to define the $X$-variables that are invariant under Möbius transformations of $\eucl$, which we call \emph{conformal $X$-variables} in the following.

Note that the cross-ratio of four pairwise skew lines contained in a one-sheeted hyperboloid in $\RP^3$ is a projective invariant. It may be defined as the cross-ratio in any conic section of the hyperboloid. Consequently, it is also invariant under Lorentz Möbius transformations in the special case that the hyperboloid is a timelike Lorentz sphere. Moreover, it coincides with the conformal $X$-variables. Hence the conformal $X$-variables are Möbius invariant quantities associated to contact congruences.

\subsection{Invariant variables}

As the $X$-variables (see Section~\ref{sec:xvariables}) can be expressed as certain ratios of tangential distances, they are naturally Laguerre invariant. On the other hand, the conformal $X$-variables that we briefly discussed in Section~\ref{sec:confxvariables} are Möbius invariant. This leads to odd phenomena. For example, we may apply a Laguerre transformation to a null congruence, and the result will not be a null congruence anymore, but the $X$-variables will still be the same and therefore still be in the Ising subvariety. On the other hand, we may apply a Möbius transformation to an isothermic (or null) congruence, and the result will be an isothermic (or null) congruence, but the $X$-variables change.

It is possible that this is the result of a mismatch of theories. Both in the smooth and the discrete setup, there is also a theory of L-isothermic (meaning Laguerre-isothermic) surfaces \cite{szereszewskilisotherm, bsisothermicmoutard, ppyweierstrass, mnlaguerrevar, mlisothermiceq}. It is possible that the more reasonable approach would be to study the conformal $X$-variables together with isothermic congruences and the (standard) $X$-variables together with L-isothermic congruences. Unfortunately, a discrete S-L-isothermic theory does not exist yet. We should also mention that the conformal $X$-variables of null congruences are \emph{not} in the Ising subvariety, but they do still define a dimer model. The conformal $X$-variables of an isothermic congruence will still be in a subvariety due to the periodicity of Miquel dynamics, but we do not discuss the details here.

Of course, neither the $X$-variables nor the conformal $X$-variables are invariant under Lie transformations, despite contact congruences having that property. Hence one may also ask whether there is a sensible definition of Lie-$X$-variables? Natural requirements would be Lie invariance, positivity and the correct change under Miquel dynamics.


\bibliographystyle{alpha}
\bibliography{references}

\end{document}